\xpatchcmd{\proof}{\itshape}{\normalfont\proofnamefont}{}{}
\newcommand{\proofnamefont}{\bfseries}
\renewcommand{\vec}[1]{\ensuremath \mathbf{\boldsymbol{#1}}}
\newcommand{\N}{\ensuremath{\mathbb{N}}}
\newcommand{\T}{\ensuremath{\mathbb{T}}}
\renewcommand{\S}{\ensuremath{\mathbb{S}}}
\newcommand{\M}{\ensuremath{\mathcal{M}}}
\newcommand{\R}{\ensuremath{\mathbb{R}}}
\newcommand{\E}{\ensuremath{\mathcal{E}}}
\newcommand{\SO}{\ensuremath{\mathrm{SO}(3)}}
\newcommand{\abs}[1]{\ensuremath{\left\vert#1\right\vert}}
\newcommand{\dx}{\mathrm{d}}
\DeclareMathOperator*{\argmin}{argmin}
\DeclareMathOperator*{\lin}{span}
\renewcommand{\d}{\, \mathrm{d}}
\newcommand{\sphere}{\mathbb S}
\newcommand{\norm}[1]{\left\lVert \smash{#1} \right\rVert}
\newcommand\multiset[2]
\newcommand{\rz}[1]{\ensuremath{\mathord{\mathrm{#1}}}}
\newcommand{\II}{\rz{II}}
\theoremstyle{definition}
\newtheorem{definition}{Definition}[section]
\newtheorem{example}[definition]{Example}
\theoremstyle{plain}
\newtheorem{theorem}[definition]{Theorem}
\newtheorem{lemma}[definition]{Lemma}
\newtheorem{remark}[definition]{Remark}
\newtheorem{corollary}[definition]{Corollary}
\newtheorem{proposition}[definition]{Proposition}
\newtheorem{problem}[definition]{Problem}
\newcommand{\dotprod}[2]{ \left< #1,#2 \right>}
\newenvironment{Lemma}{ \begin{lemma}\normalfont\slshape}{\end{lemma}}
\numberwithin{equation}{section}
\newcommand{\bend}{\hspace*{0ex} \hfill \hbox{\vrule height
    1.5ex\vbox{\hrule width 1.4ex \vskip 1.4ex\hrule  width 1.4ex}\vrule
    height 1.5ex}}
\long\def\symbolfootnote[#1]#2{\begingroup
\def\thefootnote{\fnsymbol{footnote}}\footnote[#1]{#2}\endgroup}
\newcounter{todocounter}
\newcommand{\todo}[2][noisnotdefined]{
 \marginpar{\fcolorbox{black}{yellow}{\footnotesize\textbf{todo}}
 \ifthenelse{\equal{#1}{noisnotdefined}}{}{\textcolor{black}{\newline\tiny #1}}}
 \textbf{\ifthenelse{\equal{#2}{.}}
   {\fcolorbox{red}{white}{\textcolor{red}{$\maltese$}}}{{\textcolor{red}{#2}}}}
 \refstepcounter{todocounter}}
\title{Approximating the Derivative of Manifold-valued Functions}
\date{\today}
\date{}
\author{ Ralf Hielscher\thanks{Faculty of Mathematics and Informatics, TU Bergakademie Freiberg, Germany.\newline E-mail:		\href{mailto:ralf.hielscher@math.tu-chemnitz.de}{ralf.hielscher@math.tu-chemnitz.de}}
	\and Laura Lippert\thanks{Faculty of Mathematics, Chemnitz University of Technology, Germany.\newline E-mail:		\href{mailto:laura.lippert@math.tu-chemnitz.de}{laura.lippert@math.tu-chemnitz.de}}
}
\begin{document}
\maketitle

\begin{abstract}
  We consider the approximation of manifold-valued functions by embedding the
  manifold into a higher dimensional space, applying a vector-valued
  approximation operator and projecting the resulting vector back to the
  manifold. It is well known that the approximation error for manifold-valued
  functions is close to the approximation error for vector-valued functions.
  This is not true anymore if we consider the derivatives of such
  functions. In our paper we give pre-asymptotic error bounds for the
  approximation of the derivative of manifold-valued function. In particular,
  we provide explicit constants that depend on the reach of the embedded
  manifold.
\end{abstract}

  \textit{Keywords: nonlinear approximation, manifold-valued functions,
    embedded manifolds}

\section{Introduction}
Approximating functions $f \colon \Omega \to \mathcal M$ with values in some
$D$-dimensional Riemannian manifold $\mathcal M$ has attracted lots of interest during the
last years. The central challenge is that with $\mathcal M$ not being linear,
the function spaces over $\Omega$ with values in $\mathcal M$ are not linear as
well and hence, all the well established linear approximation methods do not
have a straight forward generalization to manifold-valued functions.

One successful approach to manifold-valued approximation is to consider the
problem locally. Either one maps the function values locally to some linear
approximation space or one uses local averaging based on the geodesic
distance. These approaches allow to generalize subdivision schemes
\cite{Wallner05,XieYu,Gr09,Gro10,ShIt13}, moving least squares \cite{GrSpYu17},
quasi-interpolation \cite{Grohs2013ProjectionBasedQI} or splines
\cite{Shingel08} to the manifold-valued setting.

A different approach is to embed the manifold $\mathcal M$ into some higher
dimensional linear space $\R^{d}$ by a map
$\mathcal E \colon \mathcal M \to \R^{d}$. Note, that according to Nash's
embedding theorem \cite{Nash54} such a mapping always exists and can be guaranteed
to be locally isometric provided the dimension $d$ is at least $D(3D+11)/2$,
where $D$ denotes the dimension of $\mathcal M$. Embedding based approximation
methods can be summarized as follows
\begin{enumerate}
\item Transfer $f \in C(\Omega,\mathcal M$) via the embedding $ \mathcal E$
  into the linear function space $C(\Omega,\R^{d})$. Since often a manifold is described by vectors in $\R^d$, we will use the notation
	$f$ for the function in both function spaces.
\item Use a linear approximation operator
  $I_{\R^{d}} \colon C(\Omega,\R^{d}) \to C(\Omega,\R^{d})$ to find an approximant
  $I_{\R^{d}} f$ in the embedding.
\item Project the resulting $\R^{d}$-valued function back to the manifold
  $I_{\mathcal M} f = P_{\mathcal M} \circ (I_{\R^{d}}  f)$ using some
  projection operator $P_{\mathcal M} \colon \R^{d} \to \mathcal M$.
\end{enumerate}
Because of its generality and simplicity this approach has already been widely
investigated \cite{Grohs2013ProjectionBasedQI,Ga18} and applied \cite{Mo02,SaSe08}.  In
particular it has been shown in \cite{Ga18} that the approximation order of
the embedding based approximation operator $I_{\mathcal M} f$ is the same as
the approximation order of its linear counterpart $I_{\R^{d}}$.  It is
important to note, that the projection operator $P_{\mathcal M}$ is in general
only defined in some neighborhood $U \supset \mathcal M$ of the
manifold. Hence, the pre-asymptotic behavior of the approximation operator
$I_{\mathcal M}$ strongly depends on the size of this neighborhood which is
directly related to the so-called reach of the embedded manifold.

The aim of this paper is to analyze the pre-asymptotic behavior of the
approximation operator $I_{\mathcal M}$ with respect to the reach of the
embedded manifold. While for the error $I_{\mathcal M} f - f$ the reach only
controls the required linear approximation error
$I_{\R^{d}}  f - f$ that allows for a meaningful approximation
$I_{\mathcal M} f$, the situation is completely different for the error of the
derivatives $\d (I_{\mathcal M} f) - \d f$. In this case the chain rule has to be used for the derivative $\d (I_{\mathcal M} f)$ 
of a concatenation of approximation in $\R^d$ and projection on the manifold $\M$. The derivative of the projection on $\M$ leads 
to pre-asymptotic constants, which depend on the reach of the manifold.

Our paper is organized as follows. In section~\ref{sec:submani} we will first
show some general differential geometric properties of submanifolds of
$\R^d$. Most importantly, we identify in Lemma \ref{thm:PM=piE} the projection
operator $P_{\mathcal M}$ with an orthogonal projection in the normal bundle
over the manifold $\mathcal M$. This is only possible within some tubular
neighborhood of the manifold $\M$ that is controlled by the reach of
$\mathcal M$. The relationship between the reach of the manifold $\mathcal M$
and its curvature or second fundamental form is addressed in
section~\ref{sec:embedd-based-prop}. In Theorem~\ref{thm:B} we make use of
these relationships to describe the differential of the projection operator
$P_{\mathcal M}$ in terms of the second fundamental form. In
section~\ref{sec:change-proj-oper} we end up with the main results of this
chapter, that is we show in Theorem~\ref{thm:Psi_bound} that the derivative
$dP_{\mathcal M}(\vec x)$ of the projection operator at some point
$\vec x \in \R^{d}$ satisfies a Lipschitz-condition with respect to $\vec x$. As
our Lipschitz bound is with respect to the Euclidean distance in the embedding
it is more sharp then the bound reported in \cite{BoLiWi19} that relies on the
geodesic distance.

Section~\ref{sec:manifold} is dedicated to manifold-valued approximation. We
show that the approach of using a linear approximation operator on an
embedded manifold $\M$ in $\R^d$ and then projecting back on the manifold inherits the
approximation order of the linear approximation. Our main result is stated in
Theorem~\ref{satz:wichtigableitung} and gives a pre-asymptotic bound for the
approximation error of the first derivative that relies exclusively on the
reach of the embedded manifold. This result is illustrated in
Theorem~\ref{thm:FPS} for a specific approximation operator, the Fourier
partial sum operator.

In the final section~\ref{sec:examples} we consider two real world examples
for approximating manifold valued data. The first example deals with functions
from the two-sphere into the two-dimensional projective space that describe
the dependency between the propagation direction and the polarization
directions of seismic waves. The second example is from crystallographic
texture analysis where the local alignment of the atom lattice is described by
a map with values in the quotient $\SO/\mathcal{S}$ of the rotation group $\SO$
modulo some finite symmetry group $\mathcal{S}$. The derivative of this map has
important connections microscopic and macroscopic material properties.

\section{Submanifolds}
\label{sec:submani}
In this section we will consider smooth compact Riemannian submanifolds $\M$
of $\R^d$. We will show some differential geometric properties of submanifolds
as well as some estimations for the projection $P_\M$ and the differential of
this projection. We will use these results for estimating some approximation
errors in section~\ref{sec:manifold}.

\subsection{The Projection Operator}
\label{sec:projection-operator-1}

Throughout our work we denote by $\mathcal M \subset \R^{d}$ a smooth compact
Riemannian submanifold of $\R^d$. For every point $\vec m\in \M$ we denote the tangent space $T_\vec m\M$ as well as the normal space $N_\vec m\M$. Furthermore, we denote by $P_{\mathcal M} \colon \R^{d} \to \mathcal M$
the projection operator onto $\mathcal M$ defined as the solution of the
minimization problem
\begin{equation}\label{eq:2}
  P_{\mathcal M}(\vec x) = \argmin_{\vec m \in \mathcal M} \norm{\vec x-\vec m}_2.
\end{equation}
In general, this minimization problem does not posses a unique solution for
every $\vec x \in \R^{d}$, since there is an ambiguity to which branch of the manifold the point should be attributed.
However, if we restrict the domain of the definition of
$P_{\mathcal M}$ to some open neighborhood $U \subset \R^{d}$ of
$\mathcal M$ uniqueness can be granted.

In order to find such a neighborhood $U$ we define on the normal bundle
\begin{equation*}
  N\M
  =\{(\vec m,\vec v)\in \R^d\times \R^d:\vec m\in \M,\vec v\in N_\vec m\M\}
\end{equation*}
of $\mathcal M$ the smooth map
\begin{equation*}
  E \colon N\M \to \R^d, \quad
  E(\vec m,\vec v)=\vec m + \vec v,
\end{equation*}
that maps every normal space $N_{\vec m} \mathcal M$ to an affine linear
subspace through $\vec m \in \R^{d}$. Since we assumed $\mathcal M$ to be
compact and smooth, there exist a maximum constant $\tau>0$ such that the
mapping $E$ restricted to the open subset
\begin{equation*}
  V = \{ (\vec m,\vec v) \in N\M: \norm{\vec v}_2<\tau\}
\end{equation*}
of the normal bundle is injective, cf. \cite[6.24]{Lee12}. Setting $U = E(V)$
defines the so-called \emph{tubular neighborhood} of $\mathcal M$ and the
restriction $E \colon V \to U$ becomes a diffeomorphism. The constant $\tau$
is commonly called \emph{reach} and its inverse $1/\tau$ is the
\emph{condition number} of the manifold. The reach $\tau$ is affected by two
factors: the curvature of the manifold and the width of the narrowest
bottleneck-like structure of $\M$, which quantifies how far $\M$ is from
being self-intersecting. An estimate on the relationship between the reach and
the curvature of the manifold $\mathcal M$ will be given in
Lemma~\ref{lemma:curvatureReach}.

Using the mapping $E$ we may now give an explicit definition of the projection
operator $P_{\mathcal M}$.
\begin{Lemma}
  \label{thm:PM=piE}
  Let $\vec u \in U$ and let $\pi \colon N \mathcal M \to \mathcal M$,
  $(\vec m,\vec n) \mapsto \vec m$ be the canonical projection operator. Then
  \begin{equation*}
    P_\M(\vec u) = \pi \circ E^{-1}(\vec u)
  \end{equation*}
  is the unique solution of the minimization problem \eqref{eq:2}.
\end{Lemma}
\begin{proof}
  Let $\vec u\in U$ and $P_\M(\vec u)=\vec m\in \M$. We show that
  $\vec u-\vec m\in N_\vec m\M$. We assume the opposite and decompose
  $\vec u-\vec m$ in one part in $N_\vec m\ M$ and a part $\vec t$ in
  $T_\vec mM$.  Then there is a curve $\gamma(s)$ in $\M$ with
  $\gamma(0)=\vec m$ and $\dot{\gamma}(0)=\vec t$.  If we go along this curve,
  we obtain for sufficient small $\epsilon >0$ that
  $\vec u-\gamma(\epsilon)<\vec u-\gamma(0)=\vec u-\vec m$.  That is a
  contradiction to the definition of $P_\M$.  Since the projection $P_\M$
  should be unique, we have to show that $\pi\circ E^{-1}$ is also unique.
  For that reason we assume that for $\vec u\in U$ there holds
  $\pi\circ E^{-1}=\vec m\in \M$ and $\pi\circ E^{-1}=\vec m'\in \M$. This
  would imply $\vec u=\vec m+\vec v=\vec m'+\vec v'$ with
  $\vec v\in N_\vec m\M$ and $\vec v\in N_{\vec m'}\M$.  That is a
  contradiction to the uniqueness of $E^{-1}$ in the tubular neighborhood $U$.
\end{proof}
Let us illustrate this by a simple example.
\begin{example}\label{ex:Sd}
Let the manifold $\M$ be the $(d-1)$-dimensional unit sphere,
embedded in $\R^d$. These manifolds can be described by
\begin{equation*}
\S^{d-1}=\left\{\vec x\in \R^{d}:\|\vec x\|_2=1\right\}.
\end{equation*}
The projection $P_{\S^{d-1}}$ easily reads
\begin{equation*}
P_{\S^{d-1}}: \R^{d}\backslash\{\vec 0\} \rightarrow \S^{d-1}, \quad \quad P_{\S^{d-1}}(\vec x)=\frac{\vec x}{\|\vec x\|_2}.
\end{equation*}
This map is well-defined and smooth.
\end{example}

\subsection{Curvature and Reach of Submanifolds}
\label{sec:embedd-based-prop}

For any point $\vec m \in \mathcal M \subset \R^{d}$ on the manifold we can
decompose $\R^{d}$ as the direct sum $\R^d=T_\vec m\M\oplus N_\vec m\M$ of the
tangential space $T_\vec m\M$ and the normal space $N_\vec m\M$. Let us denote
by $P_T\colon\R^d \to T_\vec m\M$ and $P_N\colon \R^d \to N_\vec m\M$ the
corresponding orthogonal projections. Then the canonical connection $\nabla$
on $\R^{d}$ defines a connection $\nabla^{\mathcal M}$ on $\mathcal M$ by
\begin{equation}
  \label{eq:7}
  \nabla^{\mathcal M}_{\vec X}
  = P_{T} \nabla_{\vec X}(P_{T} \vec Y) + P_{N} \nabla_{\vec X}(P_{N} \vec Y),
\end{equation}
where $\vec X \colon \mathcal M \to T \mathcal M$ is a tangential and
$\vec Y \colon \mathcal M \to \R^{d}$ a general vector field on
$\mathcal M$.

If $\vec Y$ is a tangential vector field as well, the first summand
$P_{T} \nabla_{\vec X}(P_{T} \vec Y) = P_{T} \nabla_{\vec X} \vec Y$ in
\eqref{eq:7} is just the Levi-Cevita-connection on $\mathcal M$, whereas its
orthogonal complement
\begin{equation*}
  \II(\vec X,\vec Y) = P_{N}(\nabla_{\vec X} \vec Y)
\end{equation*}
is the \emph{second fundamental form} on $\mathcal M$.

We call a vector field $\vec Y \colon \mathcal M \to \R^{d}$ parallel along a
curve $\gamma$ if $\nabla^{\mathcal M}_{\dot \gamma} \vec Y = 0$. For a
geodesic $\gamma$ with $\gamma(0) = \vec m$,
$\dot \gamma(0) = \vec t \in T_{\vec m} \mathcal M$ and an arbitrary vector
$\vec y \in T_{\vec m} \mathcal M \oplus N_{\vec m} \mathcal M = \R^{d}$ we
shall use the abbreviation
\begin{equation*}
  \nabla_{\vec t} \vec y = \nabla_{\vec t} \vec Y(0)
\end{equation*}
where $\vec Y$ is the parallel transport of the vector $\vec y$ along the
curve $\gamma$.

For a fixed point $\vec m \in \mathcal M$ and a normal direction
$\vec n \in N_{\vec m} \mathcal M$ we define the operator
$\vec B_{\vec n} \colon T_{\vec m}\mathcal M \to T_{\vec m}\mathcal M$ on the
tangent space by
\begin{equation}
  \label{eq:Bn}
  \dotprod{\vec B_{\vec n} \vec x}{\vec y} = \dotprod{\vec n}{\nabla_{\vec x} \vec y},
  \quad \vec x,\vec y \in T_{\vec m} \mathcal M.
\end{equation}

We may also express $B_{\vec n} \vec x$ as the tangential part of the
covariant derivative of $\vec n$ in direction $\vec x$.

\begin{lemma}
  \label{lem:BnX=nablaXn}
  Let $\vec n \in N_{\vec m}\mathcal M$ be a normal and
  $\vec x \in T_{\vec m} \mathcal M$ a tangential vector. Then
  \begin{equation*}
    \vec B_\vec n \vec x = - P_T\nabla_{\vec x}\vec n.
  \end{equation*}
\end{lemma}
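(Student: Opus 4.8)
The plan is to obtain the identity as the infinitesimal version of the orthogonality between tangential and normal vectors, i.e. as the standard Weingarten computation. Since both $\vec B_\vec n \vec x$ and $P_T\nabla_{\vec x}\vec n$ are elements of the tangent space $T_\vec m\mathcal M$, it suffices to verify that they induce the same linear functional on $T_\vec m\mathcal M$, that is $\dotprod{\vec B_\vec n \vec x}{\vec y} = -\dotprod{P_T\nabla_{\vec x}\vec n}{\vec y}$ for every $\vec y \in T_\vec m\mathcal M$.

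First I would fix $\vec y \in T_\vec m\mathcal M$ together with the geodesic $\gamma$ satisfying $\gamma(0)=\vec m$ and $\dot\gamma(0)=\vec x$, and extend both $\vec y$ and $\vec n$ to vector fields $\vec Y$ and $\vec N$ along $\gamma$ by parallel transport with respect to $\nabla^{\mathcal M}$, exactly as in the convention introduced before \eqref{eq:Bn}. The decisive observation is that the connection $\nabla^{\mathcal M}$ from \eqref{eq:7} respects the orthogonal splitting $\R^d = T_\vec m\mathcal M \oplus N_\vec m\mathcal M$, because it applies the projections $P_T$ and $P_N$ separately, so the parallel transport equation decouples into a tangential and a normal part. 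Consequently the parallel transport of the tangential vector $\vec y$ stays tangential and that of the normal vector $\vec n$ stays normal, whence $\dotprod{\vec N(s)}{\vec Y(s)} = 0$ holds identically along $\gamma$.

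Next I would differentiate this identity at $s=0$. Since the ambient connection $\nabla$ is the canonical metric connection on $\R^d$, the product rule gives
\begin{equation*}
  0 = \frac{\dx}{\dx s}\dotprod{\vec N(s)}{\vec Y(s)}\Big|_{s=0}
  = \dotprod{\nabla_{\vec x}\vec n}{\vec y} + \dotprod{\vec n}{\nabla_{\vec x}\vec y}.
\end{equation*}
Rearranging and inserting the defining relation \eqref{eq:Bn} of $\vec B_\vec n$ yields $\dotprod{\vec B_\vec n\vec x}{\vec y} = \dotprod{\vec n}{\nabla_{\vec x}\vec y} = -\dotprod{\nabla_{\vec x}\vec n}{\vec y}$. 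Because $\vec y$ is tangential, only the tangential part of $\nabla_{\vec x}\vec n$ contributes to the last inner product, so $\dotprod{\nabla_{\vec x}\vec n}{\vec y} = \dotprod{P_T\nabla_{\vec x}\vec n}{\vec y}$. As $\vec y \in T_\vec m\mathcal M$ was arbitrary, this proves the claim.

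The hardest part — in fact the only point requiring genuine care — is justifying the orthogonality $\dotprod{\vec N(s)}{\vec Y(s)} = 0$ along the whole curve: one must confirm that $\nabla^{\mathcal M}$-parallel transport preserves the tangential and normal subbundles separately. This is precisely where the block-diagonal structure of \eqref{eq:7} is used, and once it is granted the remaining steps are a routine application of the metric compatibility of $\nabla$.
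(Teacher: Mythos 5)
Your argument is correct and follows essentially the same route as the paper: differentiate the (identically vanishing) inner product of the parallel-transported normal and tangential fields along a geodesic, use metric compatibility of the ambient connection, and invoke the definition \eqref{eq:Bn}. You are in fact slightly more explicit than the paper in justifying why $\dotprod{\vec N(s)}{\vec Y(s)}=0$ along the whole curve and why the projection $P_T$ may be inserted in the final inner product, but these are the same ideas.
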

\begin{proof}
  Let $\gamma$ be a geodesics in $\M$ with $\gamma(0)=\vec m$ and
  $\dot{\gamma}(0) = \vec x$ and let $\vec N$ be the parallel transport of
  $\vec n$ along $\gamma$. Let furthermore, $\vec Y$ be an arbitrary tangent
  vector field parallel along $\gamma$. Then we have
  \begin{equation*}
    0
    =\frac{\d}{\d s}\langle\vec N(s),\vec Y(s)\rangle|_{s=0}
    =\langle\nabla_{\vec x}\vec N(0),\vec Y(0)\rangle
    +\langle\vec N(0),\vec \nabla_{\vec x}\vec Y(0)\rangle.
  \end{equation*}
  This yields
  \begin{equation*}
    \langle\vec B_\vec n\vec x,\vec y\rangle
    =\langle\vec n,\nabla_{\vec x}\vec y\rangle
    =-\langle\nabla_{\vec x}\vec n,\vec y\rangle.
  \end{equation*}
  Since the vector field $\vec Y$ was arbitrary, this yields the assertion.
\end{proof}

The operator $\vec B_{\vec n}$ describes the extrinsic curvature of the
manifold in the point $\vec m $ and the normal direction $\vec n$. Its norm is
bounded by the condition number $\frac{1}{\tau}$ of $\mathcal M$. More
precisely the following result is shown in \cite[Proposition 6.1]{NSW08}.

\begin{lemma}
  \label{lemma:curvatureReach}
Let $\tau$ be the reach of $\M$, $\vec m\in \M$ be an arbitrary point on the manifold and $\vec n \in N_{\vec m}\mathcal M$ be
  a normal vector. Then the operator $B_{\vec n}$ defined in \eqref{eq:Bn} is
  symmetric and bounded by $\frac{1}{\tau}$, i.e., we have for tangential
  vectors $\vec x,\vec y \in T_{\vec m} \mathcal M$ the inequality
  \begin{equation}
    \label{eq:2formbound}
    \dotprod{\vec B_{\vec n} \vec x}{\vec y}
    \leq \frac{1}{\tau} \norm{\vec n} \norm{\vec x} \norm{\vec y}.
  \end{equation}
\end{lemma}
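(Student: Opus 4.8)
The plan is to separate the statement into its two claims, the self-adjointness of $\vec B_{\vec n}$ and the norm bound, and to treat the former first since it is the routine part. Because $\vec n$ is normal and $\vec y$ is tangential, the definition \eqref{eq:Bn} reads $\dotprod{\vec B_{\vec n}\vec x}{\vec y} = \dotprod{\vec n}{\nabla_{\vec x}\vec y} = \dotprod{\vec n}{\II(\vec x,\vec y)}$, so the claim reduces to the symmetry of the second fundamental form. That symmetry follows from the torsion-freeness of the ambient connection: for tangential fields the difference $\nabla_{\vec X}\vec Y - \nabla_{\vec Y}\vec X = [\vec X,\vec Y]$ is again tangential, whence its normal projection vanishes and $\II(\vec x,\vec y) = \II(\vec y,\vec x)$. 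Consequently $\dotprod{\vec B_{\vec n}\vec x}{\vec y} = \dotprod{\vec B_{\vec n}\vec y}{\vec x}$, i.e.\ $\vec B_{\vec n}$ is self-adjoint on $T_{\vec m}\M$.

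For the bound I would first reduce to the normalised case $\norm{\vec n}=1$ by homogeneity of \eqref{eq:Bn} in $\vec n$, and recall that for a self-adjoint operator the operator norm equals $\max_{\norm{\vec x}=1}\abs{\dotprod{\vec B_{\vec n}\vec x}{\vec x}}$. Combined with Cauchy--Schwarz, $\dotprod{\vec B_{\vec n}\vec x}{\vec y} \le \norm{\vec B_{\vec n}\vec x}\,\norm{\vec y}$, this reduces the whole inequality \eqref{eq:2formbound} to proving $\dotprod{\vec B_{\vec n}\vec x}{\vec x}\le 1/\tau$ for every unit tangent vector $\vec x$; the matching lower bound is obtained by applying the same estimate to $-\vec n$, using $\vec B_{-\vec n} = -\vec B_{\vec n}$.

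The heart of the argument is a geometric inequality supplied by the reach. For a unit normal $\vec n$ and any $0<t<\tau$ the point $\vec c = \vec m + t\vec n$ lies in the tubular neighbourhood $U$ and satisfies $E(\vec m,t\vec n)=\vec c$, so Lemma~\ref{thm:PM=piE} gives $P_\M(\vec c)=\vec m$. Hence $\vec m$ is the unique nearest point of $\vec c$ on $\M$, which means $\norm{\vec p-\vec c}^2 \ge \norm{\vec m-\vec c}^2 = t^2$ for all $\vec p\in\M$. Expanding $\norm{\vec p-\vec m - t\vec n}^2$ and letting $t\to\tau$ yields the key estimate $\dotprod{\vec p-\vec m}{\vec n} \le \norm{\vec p-\vec m}^2/(2\tau)$ for every $\vec p\in\M$.

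Finally I would feed a geodesic into this estimate. Let $\gamma$ be the geodesic in $\M$ with $\gamma(0)=\vec m$ and $\dot\gamma(0)=\vec x$ a unit tangent vector, and take $\vec p=\gamma(s)$. Since $\gamma$ is a geodesic its tangential acceleration vanishes, so its ambient second derivative is purely normal with $\ddot\gamma(0)=\II(\vec x,\vec x)$, giving the Taylor expansions $\dotprod{\gamma(s)-\vec m}{\vec n} = \tfrac{s^2}{2}\dotprod{\II(\vec x,\vec x)}{\vec n} + \bigo(s^3)$ and $\norm{\gamma(s)-\vec m}^2 = s^2 + \bigo(s^3)$. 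Inserting these into the key estimate, dividing by $s^2/2$ and passing to the limit $s\to 0$ produces $\dotprod{\vec B_{\vec n}\vec x}{\vec x} = \dotprod{\II(\vec x,\vec x)}{\vec n} \le 1/\tau$, which closes the argument. The main obstacle is the geometric reach inequality of the third paragraph; once it is in hand the curvature bound is a routine Taylor expansion resting only on the fact that the ambient acceleration of a geodesic equals the second fundamental form.
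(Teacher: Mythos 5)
Your argument is correct and complete, but it is worth noting that the paper does not prove this lemma at all: it simply cites \cite[Proposition 6.1]{NSW08}. What you have written is essentially a self-contained reconstruction of that reference's argument (which goes back to Federer's work on sets of positive reach). The two pillars are exactly the right ones: (i) the reduction of the bilinear bound to the quadratic form via self-adjointness of $\vec B_{\vec n}$, which you correctly derive from the symmetry of the second fundamental form, together with the sign trick $\vec B_{-\vec n}=-\vec B_{\vec n}$ to control the absolute value; and (ii) the reach inequality $\dotprod{\vec p-\vec m}{\vec n}\le \norm{\vec p-\vec m}^2/(2\tau)$, obtained by expanding $\norm{\vec p-\vec m-t\vec n}^2\ge t^2$ and letting $t\to\tau$, which is legitimate because the inequality holds for every $t<\tau$ by Lemma~\ref{thm:PM=piE} and both sides are continuous in $t$. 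The final Taylor expansion along a geodesic, using that the ambient acceleration $\ddot\gamma(0)=\II(\vec x,\vec x)$ is purely normal, correctly extracts $\dotprod{\II(\vec x,\vec x)}{\vec n}\le 1/\tau$ in the limit $s\to 0$. What your write-up buys over the paper's treatment is that the reader sees explicitly where the reach enters (only through the nearest-point property of the tubular neighbourhood, i.e.\ through Lemma~\ref{thm:PM=piE}), rather than having to consult the external reference.
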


The next lemma bounds the covariant derivative of parallel vector fields by
the condition number $\frac{1}{\tau}$ of the manifold.

\begin{lemma}\label{lem:bound_diff}
  Let $\vec Y$ be a parallel vector field along a geodesic $\gamma$ in
  $\M$. Then its covariant derivative in $\R^{d}$ is bounded by
  \begin{equation*}
   \norm{\nabla_{\dot{\gamma}}\vec Y}_2\leq \tfrac 1\tau \norm{\vec Y}_{2}\norm{\dot{\gamma}}_2.
  \end{equation*}
\end{lemma}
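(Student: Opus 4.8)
The plan is to split the parallel field $\vec Y$ into its tangential and normal parts along $\gamma$ and to observe that the single parallelism condition $\nabla^{\M}_{\dot{\gamma}}\vec Y = 0$ decouples into two independent conditions. Writing $\vec Y_T = P_T\vec Y$ and $\vec Y_N = P_N\vec Y$, definition~\eqref{eq:7} gives $\nabla^{\M}_{\dot{\gamma}}\vec Y = P_T\nabla_{\dot{\gamma}}\vec Y_T + P_N\nabla_{\dot{\gamma}}\vec Y_N$. Since the first summand is tangential, the second normal, and their sum vanishes, I would conclude that $P_T\nabla_{\dot{\gamma}}\vec Y_T = 0$ and $P_N\nabla_{\dot{\gamma}}\vec Y_N = 0$ separately. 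In other words, $\vec Y_T$ is parallel for the Levi-Civita connection on $\M$ while $\vec Y_N$ is parallel in the normal bundle.

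Next I would reconstruct the full ambient derivative $\nabla_{\dot{\gamma}}\vec Y = \nabla_{\dot{\gamma}}\vec Y_T + \nabla_{\dot{\gamma}}\vec Y_N$ by splitting each of the two terms again into tangential and normal components. For the tangential field the vanishing of $P_T\nabla_{\dot{\gamma}}\vec Y_T$ leaves only $\nabla_{\dot{\gamma}}\vec Y_T = P_N\nabla_{\dot{\gamma}}\vec Y_T = \II(\dot{\gamma},\vec Y_T)$, the second fundamental form. For the normal field the vanishing of $P_N\nabla_{\dot{\gamma}}\vec Y_N$ leaves only $\nabla_{\dot{\gamma}}\vec Y_N = P_T\nabla_{\dot{\gamma}}\vec Y_N$, which by Lemma~\ref{lem:BnX=nablaXn} applied to the normal vector $\vec Y_N$ equals $-\vec B_{\vec Y_N}\dot{\gamma}$. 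Hence $\nabla_{\dot{\gamma}}\vec Y = \II(\dot{\gamma},\vec Y_T) - \vec B_{\vec Y_N}\dot{\gamma}$, a sum of a normal and a tangential vector, so the two pieces are orthogonal and Pythagoras yields $\norm{\nabla_{\dot{\gamma}}\vec Y}_2^2 = \norm{\II(\dot{\gamma},\vec Y_T)}_2^2 + \norm{\vec B_{\vec Y_N}\dot{\gamma}}_2^2$.

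It remains to bound each term by the curvature estimate of Lemma~\ref{lemma:curvatureReach}. For the tangential term, the bound $\norm{\vec B_{\vec Y_N}\dot{\gamma}}_2 \leq \tfrac1\tau\norm{\vec Y_N}_2\norm{\dot{\gamma}}_2$ follows directly from \eqref{eq:2formbound} by testing against an arbitrary unit tangent vector. For the normal term I would test against the unit normal $\vec n = \II(\dot{\gamma},\vec Y_T)/\norm{\II(\dot{\gamma},\vec Y_T)}_2$ and use $\dotprod{\II(\dot{\gamma},\vec Y_T)}{\vec n} = \dotprod{\vec n}{\nabla_{\dot{\gamma}}\vec Y_T} = \dotprod{\vec B_{\vec n}\dot{\gamma}}{\vec Y_T}$, which \eqref{eq:2formbound} bounds by $\tfrac1\tau\norm{\dot{\gamma}}_2\norm{\vec Y_T}_2$. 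Summing the two squared bounds and invoking the orthogonal decomposition $\norm{\vec Y}_2^2 = \norm{\vec Y_T}_2^2 + \norm{\vec Y_N}_2^2$ gives $\norm{\nabla_{\dot{\gamma}}\vec Y}_2^2 \leq \tfrac1{\tau^2}\norm{\dot{\gamma}}_2^2\norm{\vec Y}_2^2$, and taking square roots yields the assertion.

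The step I expect to be the main obstacle is the decoupling in the first paragraph together with the correct invocation of Lemma~\ref{lem:BnX=nablaXn}: one must verify that $\vec Y_N$ genuinely is a normal-parallel field so that its \emph{tangential} ambient covariant derivative is exactly $-\vec B_{\vec Y_N}\dot{\gamma}$. Once the orthogonal splitting $\nabla_{\dot{\gamma}}\vec Y = \II(\dot{\gamma},\vec Y_T) - \vec B_{\vec Y_N}\dot{\gamma}$ is in place, the remainder is bookkeeping with the Pythagorean identity and two applications of the curvature bound.
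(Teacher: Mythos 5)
Your proof is correct and follows essentially the same route as the paper's: decompose $\vec Y$ into tangential and normal parts, use parallelism to reduce $\nabla_{\dot\gamma}\vec Y$ to the orthogonal sum $P_N\nabla_{\dot\gamma}\vec Y_T + P_T\nabla_{\dot\gamma}\vec Y_N$, bound each piece by $\tfrac1\tau$ via Lemma~\ref{lemma:curvatureReach} and Lemma~\ref{lem:BnX=nablaXn}, and finish with Pythagoras. Your explicit decoupling of the parallelism condition into the two separate vanishing statements is a slightly more detailed bookkeeping of what the paper does implicitly, but the argument is the same.
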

\begin{proof}
  Let $\vec Y = \vec T + \vec N$ be the decomposition of $\vec Y$ into a
  tangent vector field $\vec T$ and a normal vector field $\vec N$. Since
  $\vec Y$ is parallel along $\gamma$ we have
  \begin{align*}
    0
    = \nabla_{\dot \gamma}^{\mathcal M} \vec Y
    = P_{T} \nabla_{\dot \gamma} \vec T + P_{N} \nabla_{\dot \gamma} \vec N
  \end{align*}
  and, hence,
  \begin{equation*}
    \nabla_{\dot \gamma} \vec Y
    = P_{N} \nabla_{\dot \gamma} \vec T
    + P_{T} \nabla_{\dot \gamma} \vec N.
  \end{equation*}
Let $\vec n = P_{N} \nabla_{\dot \gamma(s)} \vec T(s)$. Then we obtain by Lemma
  \ref{lemma:curvatureReach}
  \begin{equation*}
    \norm{\vec n}^{2}
    = \dotprod{\vec n}{\nabla_{\dot \gamma(s)} \vec T(s)}
    = \dotprod{\vec B_{\vec n} \dot \gamma(s)}{\vec T(s)}
    \le \tfrac{1}{\tau} \norm{\dot \gamma} \norm{\vec n} \norm{\vec T(s)}.
  \end{equation*}
  For the tangential part we have by Lemma \ref{lem:BnX=nablaXn}
  \begin{equation*}
    \norm{P_{T} \nabla_{\dot{\gamma}(s)}\vec N(s)}_2 = \norm{\vec B_{\vec N(s)}
    \dot{\gamma}(s)}_2\leq \tfrac 1\tau \norm{\dot \gamma}\norm{\vec N(s)}.
  \end{equation*}
  The assertion follows now from Parsevals inequality.
\end{proof}

\subsection{The Differential of the Projection Operator.}
\label{sec:diff-proj-oper}

The differential
$\d P_{\mathcal M}(\vec m) \colon \R^{d} \to T_{\vec m} \mathcal M$ of the
projection $P_{\mathcal M} \colon \R^{d} \to \mathcal M$ is especially easy to
compute at points $\vec m \in \mathcal M$ on the manifold. In this case it is
simply the linear projection
$P_{T_{\vec m} \mathcal M} \colon \R^{d} \to T_{\vec m} \mathcal M$ onto the
tangential space attached to $\vec m$, i.e.
\begin{equation}
  \label{eq:dPMm}
  \d P_{\mathcal M}(\vec m) = P_{T_{\vec m} \mathcal M}.
\end{equation}
We can verify this by observing that for normal vectors $\vec n\in N_\vec m\M$
we have
\begin{equation*}
  \d P_\M(\vec m)\,\vec n
  =\lim_{h\to 0}\frac{P_\M(\vec m+h\vec n)-P_\M(\vec m)}{h}
  =\lim_{h\to 0}\frac{\vec m-\vec m}{h}=\vec 0,
\end{equation*}
while for tangent vectors $\vec t\in T_\vec m\M$ we obtain
\begin{align*}
  \d P_\M(\vec m)\,\vec t
  &=\lim_{h\to 0}\frac{P_\M(\vec m+h\vec t)-P_\M(\vec m)}{h}
  =\lim_{h\to 0}\frac{\vec m+\exp(h\vec t)-\vec m}{h}\\
  &=\lim_{h\to 0}\frac{\exp(h\vec t)}{h}=\vec t,
\end{align*}
where $\exp$ denotes the exponential map to the manifold.

The differential $\d P_{\mathcal M}(\vec m + \vec v)$,
$\vec v \in N_{\vec m} \mathcal M$ at a point not in the manifold is a little
bit more tricky. We start by observing that the tangential
$T_{(\vec m,\vec v)} N\mathcal M \subset \R^{2d}$ of the normal bundle at a point
$(\vec m,\vec v) \in N \mathcal M$ is
\begin{align*}
  T_{(\vec m,\vec v)} N\mathcal M
  &= \{ (\vec 0,\vec n) \mid \vec n\in N_{\vec m} \mathcal M \}
  \oplus \{(\vec t,\nabla_{\vec t} \vec v) \mid \vec t \in T_{\vec m} \mathcal  M\} \\
  &= \{ (\vec t, \vec u) \mid
    \vec t \in T_{\vec m} \mathcal M,
    P_{T} \vec u = \nabla_{\vec t} \vec v\}.
\end{align*}
The following lemma describes the differential $\d P_\M(\vec m+\vec v)$.

\begin{lemma}
  \label{lemma:dPM}
  Let $\vec m \in \M$ be an arbitrary point on the manifold $\M$ and $\vec v \in N_{\vec m}\mathcal M$ be a normal
  vector with $\norm{\vec v}_2<\tau$, i.e. $\vec m +\vec v$ is in the tubular
	neighborhood of $\M$. Then the derivative $\dx P_\M(\vec m+\vec v)$ satisfies for every
  tangent direction $\vec t\in T_{\vec m}\M$,
  \begin{equation*}
    \left(\dx P_\M(\vec m+\vec v)\right)
    \left(\vec t+\nabla_{\vec t}\vec v\right)
    =\vec t.
  \end{equation*}
  while it vanishes for any normal direction  $\vec n\in N_\vec m\M$, i.e.
  \begin{equation*}
    \dx P_\M(\vec m+\vec v)\,\vec n=\vec 0.
  \end{equation*}
\end{lemma}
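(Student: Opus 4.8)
The plan is to avoid differentiating $P_\M$ directly and instead differentiate the factorization $P_\M = \pi \circ E^{-1}$ supplied by Lemma~\ref{thm:PM=piE}. Composing with $E$ turns this into the identity $P_\M \circ E = \pi$ on $V$, since for $(\vec m,\vec v)\in V$ we have $P_\M(E(\vec m,\vec v)) = \pi(E^{-1}(E(\vec m,\vec v))) = \pi(\vec m,\vec v) = \vec m$. As $E\colon V\to U$ is a diffeomorphism, $P_\M$ is smooth on $U$, and differentiating at $(\vec m,\vec v)$ via the chain rule gives
\begin{equation*}
  \dx P_\M(\vec m+\vec v) \circ \dx E(\vec m,\vec v) = \dx\pi(\vec m,\vec v).
\end{equation*}
Reading this off at the two summands of the tangent space $T_{(\vec m,\vec v)}N\M$ will deliver both asserted formulas at once.

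First I would identify the two elementary differentials. Since $E(\vec m,\vec v)=\vec m+\vec v$ is the restriction to $N\M$ of the linear addition map on $\R^d\times\R^d$, its differential is again addition, i.e.\ $\dx E(\vec m,\vec v)(\vec a,\vec b)=\vec a+\vec b$ for $(\vec a,\vec b)\in T_{(\vec m,\vec v)}N\M\subset\R^{2d}$; likewise $\pi(\vec m,\vec v)=\vec m$ is the restriction of the linear projection onto the first factor, so $\dx\pi(\vec m,\vec v)(\vec a,\vec b)=\vec a$. Substituting these into the chain-rule identity collapses everything to the single relation
\begin{equation*}
  \dx P_\M(\vec m+\vec v)\,(\vec a+\vec b)=\vec a,
  \qquad (\vec a,\vec b)\in T_{(\vec m,\vec v)}N\M.
\end{equation*}

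Both assertions then follow by inserting the two summands of the tangent-space decomposition recorded just before the lemma. The choice $(\vec a,\vec b)=(\vec t,\nabla_{\vec t}\vec v)$ with $\vec t\in T_{\vec m}\M$ yields $\dx P_\M(\vec m+\vec v)(\vec t+\nabla_{\vec t}\vec v)=\vec t$, and the choice $(\vec a,\vec b)=(\vec 0,\vec n)$ with $\vec n\in N_{\vec m}\M$ yields $\dx P_\M(\vec m+\vec v)\,\vec n=\vec 0$. The only point I expect to require care is the claim that these two formulas actually determine $\dx P_\M(\vec m+\vec v)$ on all of $\R^d$, i.e.\ that the inputs $\vec t+\nabla_{\vec t}\vec v$ and $\vec n$ jointly range over $\R^d$. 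This is the infinitesimal counterpart of $E$ being a diffeomorphism, and it is precisely where $\norm{\vec v}_2<\tau$ enters: combining Lemma~\ref{lem:BnX=nablaXn} with the normality of the parallel transport gives $\nabla_{\vec t}\vec v=-\vec B_{\vec v}\vec t$, so that $\vec t+\nabla_{\vec t}\vec v=(\mathrm{Id}-\vec B_{\vec v})\vec t$; by Lemma~\ref{lemma:curvatureReach} one has $\norm{\vec B_{\vec v}}\le\norm{\vec v}_2/\tau<1$, whence $\mathrm{Id}-\vec B_{\vec v}$ is invertible on $T_{\vec m}\M$ and its image together with $N_{\vec m}\M$ spans $\R^d$. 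With the domain thus exhausted, the linear map $\dx P_\M(\vec m+\vec v)$ is fully determined, proving the lemma.
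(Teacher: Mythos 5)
Your proposal is correct and follows essentially the same route as the paper: both differentiate the factorization $P_\M = \pi\circ E^{-1}$ from Lemma~\ref{thm:PM=piE}, identify $\d\pi$ and $\d E$ as the restrictions of the obvious linear maps, and evaluate on the two summands of $T_{(\vec m,\vec v)}N\M$. Your closing remark, which makes the invertibility of $\d E(\vec m,\vec v)$ explicit via $\vec t+\nabla_{\vec t}\vec v=(\vec I-\vec B_{\vec v})\vec t$ and the bound $\norm{\vec B_{\vec v}}<1$, is a slightly more quantitative justification of a step the paper handles by simply noting that $E$ is a diffeomorphism near $\vec m+\vec v$.
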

\begin{proof}
  According to Lemma~\ref{thm:PM=piE} we have $P_\M = \pi \circ E^{-1}$, where
  $\pi \colon N\mathcal M \to \mathcal M$, $(\vec m,\vec v) \mapsto \vec m$ is
  the projection operator. Its differential at the point $(\vec m,\vec v) \in
  N\mathcal M$ is the projection
  \begin{equation*}
    \d \pi(\vec m,\vec v)
    \colon T_{(\vec m,\vec v)} N \mathcal M \to T_{\vec m} \mathcal M, \quad
    (\vec t, \vec u) \mapsto \vec t.
  \end{equation*}
  The differential of the mapping $E \colon N \mathcal M \to \R^{d}$ in a
  point $(\vec m,\vec v) \in N\mathcal M$ is given by
  \begin{equation*}
    \d E(\vec m,\vec v) \colon T_{(\vec m,\vec v)} N \mathcal M \to \R^{d},
    \quad
    (\vec t, \vec u) \mapsto \vec t + \vec u.
  \end{equation*}
  Since $\vec m+\vec v$ is within the tubular neighborhood of
  $\mathcal M$, $E$ is invertible in some neighborhood of
  $\vec m+\vec v$. Then $\d E(\vec m,\vec v)$ is invertible as well and we
  have for any normal vector $\vec n \in N_{\vec m} \mathcal M$
  \begin{equation*}
    \d E^{-1}(\vec m + \vec v)\, \vec n = (\vec 0, \vec n)
  \end{equation*}
  and for any tangent vector $\vec t \in T_{\vec m} \mathcal M$
  \begin{equation*}
    \d E^{-1}(\vec m + \vec v)\, (\vec t + \nabla_{\vec t} \vec v)
    = (\vec t, \nabla_{\vec t} \vec v).
  \end{equation*}
  Together with the chain rule this implies the assertion.
\end{proof}

The image of $\dx P_\M(\vec m+\vec v)$ is contained
in the tangential space $T_\vec m\M$, especially $\dx P_\M(\vec m+\vec v)$ is
the projection $P_{T_\vec m\M}$ up to a factor matrix. We will write this
linear operator $\dx P_\M(\vec m+\vec v)$ in another way, to see the
difference to the linear operator $\dx P_\M(\vec m)$.

\begin{theorem}
  \label{thm:B}
  Let $\vec m \in \M$ be a point on the manifold, let
  $\vec v \in N_{\vec m} \mathcal M$ be a normal vector with
  $\norm{\vec v} < \tau$ and let
  $\vec B_{\vec v} \colon T_{\vec m}\mathcal M \to T_{\vec m}\mathcal M$ be
  the symmetric operator defined in \eqref{eq:Bn}, extended to
  $\vec B_{\vec v} \colon \R^d \to \R^d$ by $\vec B_\vec v\vec n=\vec 0$ for
  all normal vectors $\vec n\in N_\vec m\M$. Then the derivative of the
  projection operator $P_\M$ satisfies
  \begin{equation*}
    \dx P_\M(\vec m+\vec v)
    =P_{T_{\vec m} \M}(\vec I - \vec B_{\vec v})^{-1}
    =\dx P_\M(\vec m) - \vec B_\vec v\left(\vec I+\vec B_{\vec v}\right)^{-1},
  \end{equation*}
  where $\vec I \colon \R^d \to \R^d$ is the identity.
\end{theorem}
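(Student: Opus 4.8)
My plan is to turn the implicit characterization of $\dx P_\M(\vec m+\vec v)$ from Lemma~\ref{lemma:dPM} into the two explicit operator formulas. The first step is to rewrite the tangential test direction $\vec t+\nabla_{\vec t}\vec v$ appearing there. Applying the Weingarten relation of Lemma~\ref{lem:BnX=nablaXn} with the normal vector $\vec n=\vec v$ gives $P_T\nabla_{\vec t}\vec v=-\vec B_{\vec v}\vec t$, so that
\begin{equation*}
  \vec t+\nabla_{\vec t}\vec v
  =(\vec I-\vec B_{\vec v})\vec t+P_N\nabla_{\vec t}\vec v,
\end{equation*}
where the remaining summand $P_N\nabla_{\vec t}\vec v$ is a normal vector. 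Substituting this into Lemma~\ref{lemma:dPM} and using that $\dx P_\M(\vec m+\vec v)$ annihilates every normal direction, I obtain $\dx P_\M(\vec m+\vec v)\,(\vec I-\vec B_{\vec v})\vec t=\vec t$ for all $\vec t\in T_{\vec m}\M$.

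Next I would justify inverting $\vec I-\vec B_{\vec v}$ on $T_{\vec m}\M$. Since $\norm{\vec v}<\tau$, Lemma~\ref{lemma:curvatureReach} yields the operator bound $\norm{\vec B_{\vec v}}\le\norm{\vec v}/\tau<1$, so $\vec I-\vec B_{\vec v}$ is invertible on the tangent space, for instance via its Neumann series. Writing $\vec s=(\vec I-\vec B_{\vec v})\vec t$ then shows that $\dx P_\M(\vec m+\vec v)$ acts as $(\vec I-\vec B_{\vec v})^{-1}$ on $T_{\vec m}\M$ and as $\vec 0$ on $N_{\vec m}\M$. Because $\vec B_{\vec v}$ was extended by zero on normals, $(\vec I-\vec B_{\vec v})^{-1}$ fixes $N_{\vec m}\M$, and pre-composing with $P_{T_{\vec m}\M}$ exactly records the vanishing on the normal part; this gives the first equality $\dx P_\M(\vec m+\vec v)=P_{T_{\vec m}\M}(\vec I-\vec B_{\vec v})^{-1}$.

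For the second equality I would apply the elementary resolvent (push-through) identity $(\vec I-\vec B_{\vec v})^{-1}=\vec I+\vec B_{\vec v}(\vec I-\vec B_{\vec v})^{-1}$. Composing on the left with $P_{T_{\vec m}\M}$ and noting that $\vec B_{\vec v}$ already takes values in $T_{\vec m}\M$, so that $P_{T_{\vec m}\M}\vec B_{\vec v}=\vec B_{\vec v}$, together with $\dx P_\M(\vec m)=P_{T_{\vec m}\M}$ from \eqref{eq:dPMm}, rearranges the first formula into the stated difference representation that isolates the correction term $\dx P_\M(\vec m+\vec v)-\dx P_\M(\vec m)$ built from $\vec B_{\vec v}$.

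I expect the main difficulty to be the careful bookkeeping on all of $\R^d$ rather than on $T_{\vec m}\M$ alone: one must keep track of how the extended operator $\vec B_{\vec v}$, its resolvent, and the projection $P_{T_{\vec m}\M}$ act on the tangential and normal summands simultaneously, and in particular verify that $P_{T_{\vec m}\M}(\vec I-\vec B_{\vec v})^{-1}$ faithfully reproduces both conclusions of Lemma~\ref{lemma:dPM}. Getting the signs exactly right in the Weingarten relation and in the resolvent identity is the other place where care is needed.
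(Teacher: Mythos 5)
Your proposal is correct and follows essentially the same route as the paper's own proof: characterize $\dx P_\M(\vec m+\vec v)$ via Lemma~\ref{lemma:dPM}, use the Weingarten relation of Lemma~\ref{lem:BnX=nablaXn} to rewrite $\vec t+\nabla_{\vec t}\vec v$ as $(\vec I-\vec B_{\vec v})\vec t$ modulo a normal remainder, invert $\vec I-\vec B_{\vec v}$ via the curvature bound of Lemma~\ref{lemma:curvatureReach}, and apply the resolvent identity for the second formula (you are in fact slightly more careful than the paper in explicitly isolating and discarding the normal part $P_N\nabla_{\vec t}\vec v$). One caveat: the resolvent identity yields $\dx P_\M(\vec m)+\vec B_{\vec v}(\vec I-\vec B_{\vec v})^{-1}$, which is what the paper's proof also derives, so you should not claim to reach the displayed form $\dx P_\M(\vec m)-\vec B_{\vec v}(\vec I+\vec B_{\vec v})^{-1}$ verbatim — that display appears to carry a sign typo inherited from the paper.
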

\begin{proof}
  Using Lemma \ref{lemma:dPM} we obtain for all tangential vectors
  $\vec t\in T_\vec m\M$,
  \begin{align*}
    P_{T_\vec m\M}\vec t
    &=\vec t=\left(\dx P_\M(\vec m+\vec v)\right)\left(\vec t+\nabla_{\vec t} \vec v\right)\\
    &=\left(\dx P_\M(\vec m+\vec v)\right)\left(\vec t-\vec B_\vec v\vec t\right)
      =\left(\dx P_\M(\vec m+\vec v)\right)\left(\vec I-\vec B_\vec v\right)\vec t.
  \end{align*}
  and for all normal vectors $\vec n\in N_\vec m\M$,
  \begin{equation*}
    \vec 0  = P_{T_\vec m\M}\vec n=\left(\dx P_\M(\vec m+\vec v)\right)\left(\vec I-\vec B_\vec v\right)\vec n.
  \end{equation*}
  Consequently, we have
  \begin{equation*}
    P_{T_\vec m\M} = \dx P_\M(\vec m+\vec v) \left(\vec I-\vec B_\vec v\right).
  \end{equation*}
  By our assumption and \eqref{eq:2formbound} we have
  $\norm{\vec B_\vec v}\leq\frac{1}{\tau} \norm{\vec v}<1$ and hence, the
  operator $\vec I-\vec B_\vec v$ is invertible. This yields the first part of
  the assertion. For the second part we use \eqref{eq:dPMm} and compute
  \begin{align*}
    \dx P_\M(\vec m+\vec v)
    &=P_{T_\vec m\M}\left(\vec I-\vec B_\vec v\right)^{-1}\\
    &=P_{T_\vec m\M}\left(\vec I+\vec B_\vec v\left(\vec I-\vec B_\vec v\right)^{-1}\right)\\
    &=\dx P_\M(\vec m)+P_{T_\vec m\M}\vec B_\vec v\left(\vec I-\vec B_\vec v\right)^{-1}\\
    &=\dx P_\M(\vec m)+\vec B_\vec v\left(\vec I-\vec B_\vec v\right)^{-1},
  \end{align*}
  where the last equality follows from the fact that the image of
  $\vec B_\vec n$ is in the tangent space $T_{\vec m}\M$, so the projection on
  $T_{\vec m}\M$ is unnecessary.
\end{proof}
We consider again the manifold from example~\ref{ex:Sd}.
\begin{example}
  For $\M=\S^{d-1}\subset \R^d$ any normal vector $\vec v \in N_{\vec
    m}\S^{d-1}$ has the representation $\vec v = v\,\vec m$.  Let
  $\{\vec t_i\}_{i=1}^{d-1}\subset T_{\vec m}\S^{d-1}$ be an orthonormal basis of
  $T_m\S^{d-1}$. Then $\nabla_{\vec t_i} \vec m = \vec t_{j}$ and hence
  \begin{equation*}
    \vec B_\vec v=-v\sum_{i=1}^{d-1}{\vec t_i \vec t_i^\top}.
  \end{equation*}
  By Theorem~\ref{thm:B} and the orthonormality of
  $\{\vec m\}\cap \{\vec t_i\}_{i=1}^{d-1}$ of we obtain for $v > -1$ and
  $\vec x = \vec m + v \vec m$,
  \begin{equation*}
    \label{eq:11}
    \d P_{\S^{d-1}}(\vec x)
    = \frac{1}{1+v}\sum_{i=1}^{d-1}{\vec t_i \vec t_i^\top}
    = \frac{1}{\norm{\vec x}_2}\,
    \left(\vec I_{d\times d}-\frac{\vec x}{\norm{\vec x}_2}\left(\frac{\vec x}{\norm{\vec x}_2}\right)^\top\right).
  \end{equation*}
\end{example}

\subsection{Deviation of the Projection Operator}
\label{sec:change-proj-oper}

In this section we are interested in the change of the derivative
$\dx P_\M(\vec m)$ of the projection operator for small deviations of
$\vec m$. We shall show that for two points $\vec m$ and $\vec z$ on $\M$ and
$\vec v\in N_\vec m\M$ with $\norm{\vec v}_2<\tau$ we can bound the difference
$\norm{\d P_\M(\vec m+\vec v)-\d P_\M(\vec z)}_2$ by a multiple of the
Euclidean distance $\norm{\vec m+\vec v-\vec z}_2$.

As usual we start with the case that both points are on the manifold.
According to \cite[Lemma 6]{BoLiWi19} the difference of the differentials is
then bounded by
\begin{equation*}
  \norm{\d P_\M(\vec m)-\d P_\M(\vec z)}_2
  \le \frac{1}{\tau}\, d(\vec m,\vec z),
\end{equation*}
where $d(\vec m,\vec z)$ denotes the geodesic distance between the points
$\vec m, \vec z \in \mathcal M$. If the Euclidean distance between the two
points is bounded by $\norm{\vec m - \vec z}_2 \le 2 \tau$ we have by
\cite[Lemma 3]{BoLiWi19} and the fact that $\arcsin(x)\leq \frac{\pi}{2}x$
for $0\leq x\leq 1$, the following estimate between geodesic distance and
Euclidean distance in the embedding
\begin{equation}
  \label{eq:XX}
  d(\vec m, \vec z) \le \frac\pi 2 \norm{\vec m - \vec z}_2,
\end{equation}
which leads to the local estimate
\begin{equation*}
  \norm{\d P_\M(\vec m)-\d P_\M(\vec z)}_2
  \le \frac{\pi}{2 \tau} \norm{\vec m - \vec z}_2.
\end{equation*}
In the following Theorem we prove a sharper and global bound for this
difference.

\begin{theorem}
  \label{thm:C2Teil1}
  For all $\vec m,\vec z\in \M$ the difference between the projection
  operators $P_{T_\vec m\M}$ and $P_{T_\vec z \M}$  onto the respective
  tangential spaces is bounded by
  \begin{equation*}
    \left\|P_{T_\vec m\M}-P_{T_\vec z\M}\right\|_2 \leq \frac 1\tau \norm{\vec m -\vec z}_2.
  \end{equation*}
\end{theorem}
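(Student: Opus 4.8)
The plan is to first dispose of the easy regime. If $\|\vec m-\vec z\|_2\ge\tau$ the estimate is trivial, because the difference of two orthogonal projections onto subspaces of equal dimension always satisfies $-\vec I\preceq P_{T_\vec m\M}-P_{T_\vec z\M}\preceq\vec I$, so its operator norm is at most $1\le\tfrac1\tau\|\vec m-\vec z\|_2$. Hence I would assume $R:=\|\vec m-\vec z\|_2<\tau$. By \eqref{eq:dPMm} we have $P_{T_\vec m\M}=\dx P_\M(\vec m)$ and $P_{T_\vec z\M}=\dx P_\M(\vec z)$. Since $P_{T_\vec m\M}$ and $P_{T_\vec z\M}$ are symmetric and both tangent spaces have dimension $\dim\M$, the operator norm equals the sine of the largest principal angle between $T_\vec m\M$ and $T_\vec z\M$, i.e. $\|P_{T_\vec m\M}-P_{T_\vec z\M}\|_2=\max\{\langle\vec a,\vec n\rangle : \vec a\in T_\vec m\M,\ \vec n\in N_\vec z\M,\ \|\vec a\|_2=\|\vec n\|_2=1\}$. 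It therefore suffices to bound this cross term by $R/\tau$.

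\paragraph{Main approach.}
To control $\langle\vec a,\vec n\rangle$ I would exploit the reach through its supporting-ball characterisation: for a unit normal $\vec n\in N_\vec z\M$ the open ball $B(\vec z+\tau\vec n,\tau)$ meets $\M$ in no point, which gives Federer's envelope $|\langle\vec p-\vec z,\vec n\rangle|\le\tfrac{1}{2\tau}\|\vec p-\vec z\|_2^2$ for every $\vec p\in\M$, and in particular $\|P_{N_\vec z\M}(\vec p-\vec z)\|_2\le\tfrac{1}{2\tau}\|\vec p-\vec z\|_2^2$. I would then take the geodesic $\gamma$ with $\gamma(0)=\vec m$ and $\dot\gamma(0)=\vec a$ and study $s\mapsto\langle\gamma(s)-\vec z,\vec n\rangle$: its derivative at $s=0$ is exactly the sought quantity $\langle\vec a,\vec n\rangle$, its values are trapped inside the quadratic envelope above, and its second derivative is $\langle\II(\dot\gamma,\dot\gamma),\vec n\rangle$, which is bounded by $\tfrac1\tau$ via Lemma~\ref{lemma:curvatureReach}. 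The idea is that Federer's quadratic trap together with the curvature bound $\|\II\|\le\tfrac1\tau$ should pin the admissible slope $\langle\vec a,\vec n\rangle$ down to $R/\tau$, the chord entering through $\|\vec m-\vec z\|_2=R$ at the base point $s=0$.

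\paragraph{The main obstacle.}
The delicate point, and where I expect the real work to lie, is extracting the \emph{sharp} constant $1$ together with the \emph{Euclidean} distance rather than the geodesic one. The two naive routes both fail in an instructive way. Integrating the infinitesimal rate of change of the tangent projection along $\gamma$ — which the earlier results give as $\|\tfrac{d}{dt}P_{T_{\gamma(t)}\M}\|_2\le\tfrac1\tau\|\dot\gamma\|_2$, since $\tfrac{d}{dt}P_{T_{\gamma(t)}\M}\,\vec w=\II(\dot\gamma,P_{T}\vec w)+\vec B_{P_N\vec w}\dot\gamma$ — only reproduces the geodesic-distance bound $\tfrac1\tau\,d(\vec m,\vec z)$ of \cite{BoLiWi19}; moreover on $\S^{d-1}$ this rate is attained, so arc length cannot be sharpened to the chord by any purely infinitesimal argument along the geodesic. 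Conversely, integrating the ambient Hessian of $P_\M$ along the straight segment from $\vec z$ to $\vec m$ does produce the chord, but with a constant larger than $1$, because the segment runs through the tubular interior where $\dx P_\M$ is amplified by the factor $(\vec I-\vec B_\vec v)^{-1}$ of Theorem~\ref{thm:B}. The gain one needs is precisely the exact cancellation visible for the circle, where the tangent rotation is $\sin\psi=2\cos\tfrac\psi2\sin\tfrac\psi2$ while $R/\tau=2\sin\tfrac\psi2$, so the slack is the single factor $\cos\tfrac\psi2\le1$. Capturing this cancellation forces one to use the global supporting-ball geometry of the reach, rather than either infinitesimal estimate in isolation, and doing so while holding the constant at exactly $\tfrac1\tau$ is the crux of the proof.
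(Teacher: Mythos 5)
Your reduction steps are fine -- the trivial case (with the sharper constant $1$ for the difference of two orthogonal projections), the identification $P_{T_\vec m\M}=\dx P_\M(\vec m)$, and the reformulation of $\norm{P_{T_\vec m\M}-P_{T_\vec z\M}}_2$ as the largest cross term $\langle\vec a,\vec n\rangle$ between a unit tangent at $\vec m$ and a unit normal at $\vec z$ are all correct. You have also correctly diagnosed why the two naive routes fail. But the proposal stops exactly at the point where the proof has to happen: you never derive $\langle\vec a,\vec n\rangle\le\frac{1}{\tau}\norm{\vec m-\vec z}_2$ from the ingredients you list. If one actually implements your plan -- Federer's envelope $|\langle\gamma(s)-\vec z,\vec n\rangle|\le\frac{1}{2\tau}\norm{\gamma(s)-\vec z}_2^2$ together with the Taylor expansion of $h(s)=\langle\gamma(s)-\vec z,\vec n\rangle$ using $|h''|\le\frac1\tau$ -- one gets, after optimizing the step size $s$ and using $\norm{\gamma(s)-\vec z}_2\le s+R$, a bound of the form $\frac{1+\sqrt 2}{\tau}R$ (or $\frac{3}{\tau}R$ for the one-sided version), not $\frac{1}{\tau}R$. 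Since you explicitly flag recovering the constant $1$ as "the crux" and offer no mechanism for it, this is a genuine gap, not a routine omission.

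For comparison, the paper closes this gap by a different device. It parallel-transports an orthonormal frame along the geodesic $\gamma$ from $\vec m$ to $\vec z$, obtaining a rotation $\vec R$ with $\vec R\,T_\vec m\M=T_\vec z\M$, and reduces the claim via a commutator inequality ($\norm{\vec T\vec R-\vec R\vec T}_2\le\norm{\vec I-\vec R}_2$ for a projection $\vec T$ and a rotation $\vec R$) to bounding $2-2\dotprod{\vec x}{\vec R\vec x}$. The cancellation you were hunting for then comes from the identity
\begin{equation*}
  2-2\cos\tfrac{t}{\tau}=\frac{1}{\tau^2}\int_0^t\!\!\int_0^t\cos\tfrac{\xi-\eta}{\tau}\,\dx\eta\,\dx\xi
  \le\frac{1}{\tau^2}\int_0^t\!\!\int_0^t\dotprod{\dot\gamma(\xi)}{\dot\gamma(\eta)}\,\dx\eta\,\dx\xi
  =\frac{1}{\tau^2}\norm{\vec m-\vec z}_2^2,
\end{equation*}
where the middle inequality uses the Lipschitz bound $\angle(\dot\gamma(\xi),\dot\gamma(\eta))\le\frac{|\xi-\eta|}{\tau}\le\pi$ from Lemma~\ref{lem:bound_diff}, and the last equality is just $\int_0^t\dot\gamma=\vec z-\vec m$. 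It is this double-integral comparison that converts arc length into the chord with constant exactly $1$; something playing its role would have to be supplied before your outline could be called a proof.
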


\begin{proof}
  First of all we note that for $\norm{\vec m-\vec z}_2\ge 2\,\tau$ the
  assertion is immediately satisfied since
  $\left\|P_{T_\vec m\M}-P_{T_\vec z\M}\right\|_2 \leq 2$ independently of
  $\vec m,\vec z\in \M$. We may therefore assume
  $\norm{\vec m-\vec z}_2 < 2\,\tau$ for the rest of the proof.

  In order to estimate the difference between the two projection operators we
  consider a geodesic $\gamma$ with $\gamma(0)=\vec m$, $\gamma(t) = \vec z$
  and $\norm{\dot \gamma}_2=1$. Furthermore, we consider an orthonormal basis
  $\{\vec t_i\}_{i=1}^D$ in $T_\vec m\M$ and an orthonormal basis
  $\{\vec n_j\}_{j=1}^{d-D}$ in $N_\vec m\M$. The parallel transport of these
  basis vectors along $\gamma$ defines a rotation $\vec R \in \mathrm{SO}(d)$
  that maps the tangent space $T_{\vec m}\M$ onto the tangent space
  $T_{\vec z}\M$. Using the rotation $\vec R$ we may rewrite the difference
  between the projection operators as
  \begin{equation*}
    P_{T_\vec m\M} - P_{T_\vec z\M}
    = P_{T_\vec m\M} - \vec R P_{T_\vec m\M} \vec R^{T}.
  \end{equation*}
  By Lemma~\ref{lemma:commutator} in the appendix we obtain
  \begin{equation}
    \label{eq:5}
    \norm{P_{T_\vec m\M} - P_{T_\vec z\M}}_{2} = \norm{P_{T_\vec m\M} \vec R -
      \vec R P_{T_\vec z\M}}_{2} \le \norm{\vec I - \vec R}_{2}
  \end{equation}
  and hence, it suffices to bound for any normalized $\vec x \in \R^{d}$
  \begin{equation}
    \label{eq:6}
    \norm{(\vec I - \vec R) \vec x}_{2}^{2}
    = 2 - 2\dotprod{\vec x}{\vec R \vec x}.
  \end{equation}

  By definition $\vec R \vec x$ is the result of the parallel transport of
  $\vec x$ along the curve $\gamma$ in $\gamma(t) = \vec z$. Let us denote by
  $\vec X(s)$ the parallel transport of $\vec x$ along $\gamma$ for all
  times $s \in [0,t]$. Viewing $s \mapsto \vec X(s)$ as a curve on
  $\mathbb S^{d-1}$ with velocity bounded according to
  Lemma~\ref{lem:bound_diff} by
  $\norm{\dot{\vec X}(s)} = \norm{\nabla_{\dot{\gamma}(s)}\vec X(s)}_2 \leq
  \frac{1}{\tau}$, we conclude that
  \begin{equation}
    \label{eq:3}
    \angle(\vec X(\eta),\vec X(\xi))
    \leq \frac{1}{\tau}|\eta-\xi|, \quad \eta,\xi \in [0,t].
  \end{equation}

  Since $\gamma$ is a geodesic we can set in \eqref{eq:3},
  $\vec X = \dot \gamma$. As $\abs{\eta - \xi} \le t$ and $t$ is the geodesic
  distance between $\vec z$ and $\vec m$ we can use \eqref{eq:XX} and our
  assumption $\norm{\vec m-\vec z}_2 < 2 \tau$ to bound the right hand side of
  \eqref{eq:3} by
  \begin{equation*}
    \angle(\dot{\gamma}(\xi),\dot{\gamma}(\eta))
    \le \frac{1}{\tau} \abs{\eta - \xi}
    \le \frac{t}{\tau}
    \le \frac{\pi}{2\tau} \norm{\vec z - \vec m}_{2} \le \pi.
  \end{equation*}
  Making use of the monotonicity of the cosine this implies
  \begin{equation}
    \label{eq:10}
    \cos \angle(\dot{\gamma}(\xi),\dot{\gamma}(\eta))
    > \cos \tfrac{\xi-\eta}{\tau}, \quad
    \xi, \eta \in [0,t].
  \end{equation}
  Considering again the general vector field $\vec X$ we use
  \eqref{eq:3} and \eqref{eq:10} to bound \eqref{eq:6} by
  \begin{align*}
    2-2\dotprod{\vec X(0)}{\vec X(t)}
    &= 2-2\,\cos(\angle (\vec X(0),\vec X(t)))\\
    &\leq 2 - 2\,\cos \tfrac{t}{\tau}
      =\frac{1}{\tau^2} \int_{0}^t\int_{0}^t
      \cos \tfrac{\xi-\eta}{\tau}\,\dx \eta\,\dx \xi\\
    &\leq \frac{1}{\tau^2} \int_{0}^t\int_{0}^t \cos
      \angle(\dot{\gamma}(\xi),\dot{\gamma}(\eta)) \,\dx \eta\,\dx \xi\\
    &= \frac{1}{\tau^2} \int_{0}^t\int_{0}^t\langle\dot{\gamma}(\xi),\dot{\gamma}(\eta)\rangle\,\dx \eta\,\dx \xi
      = \frac{1}{\tau^2} \norm{\vec m-\vec z}_2^2.
  \end{align*}
  In combination with \eqref{eq:5} and \eqref{eq:6} this proves
  \begin{equation*}
    \left\|P_{T_\vec m\M}-P_{T_\vec z\M}\right\|_2 \leq \frac 1\tau \norm{\vec m -\vec z}_2.\qedhere
  \end{equation*}
\end{proof}
Using the example of the unit circle it can be easily verified that our new
bound is sharp.

So far we bounded the variation of the projection operator for points on the
manifold. For the general case that only one point is on the manifold we have
the following result.

\begin{theorem}
  \label{thm:Psi_bound}
  Let $\vec m,\vec z\in \M$ and $\vec v\in N_\vec m\M$ with
  $\norm{\vec v}_2<\tau$. Then
  \begin{align*}
    \norm{\dx P_\M(\vec m+\vec v)-\dx  P_\M(\vec z)}_2
    &\le \frac{1}{\tau} \norm{\vec m - \vec z}_{2}
    + \frac{1}{\tau- \norm{\vec v}_{2}} \norm{\vec v}_{2}\\
    &\leq\left(\frac 2\tau +\frac{1}{\tau-\norm{\vec v}_2}\right)
    \norm{\vec m+\vec v-\vec z}_2.
  \end{align*}
\end{theorem}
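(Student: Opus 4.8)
The plan is to split the difference by the triangle inequality through the intermediate point $\dx P_\M(\vec m)$, writing
\[
\norm{\dx P_\M(\vec m+\vec v)-\dx P_\M(\vec z)}_2
\le \norm{\dx P_\M(\vec m+\vec v)-\dx P_\M(\vec m)}_2
+ \norm{\dx P_\M(\vec m)-\dx P_\M(\vec z)}_2,
\]
so that the two terms cleanly separate the deviation in the normal direction $\vec v$ from the deviation along the manifold from $\vec m$ to $\vec z$. Each term should then match one summand of the claimed first inequality.

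For the second term I would use \eqref{eq:dPMm}, which identifies $\dx P_\M(\vec m)=P_{T_\vec m\M}$ and $\dx P_\M(\vec z)=P_{T_\vec z\M}$, and then invoke Theorem~\ref{thm:C2Teil1} to obtain the bound $\frac1\tau\norm{\vec m-\vec z}_2$ directly. For the first term I would apply Theorem~\ref{thm:B} in the form derived in its proof, namely $\dx P_\M(\vec m+\vec v)-\dx P_\M(\vec m)=\vec B_\vec v(\vec I-\vec B_\vec v)^{-1}$. Setting $b=\norm{\vec B_\vec v}_2$, Lemma~\ref{lemma:curvatureReach} gives $b\le\frac1\tau\norm{\vec v}_2<1$, and the Neumann series yields $\norm{(\vec I-\vec B_\vec v)^{-1}}_2\le\frac{1}{1-b}$. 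Hence this term is bounded by $\frac{b}{1-b}$, and since $t\mapsto\frac{t}{1-t}$ is increasing on $[0,1)$ I may replace $b$ by its upper bound $\frac1\tau\norm{\vec v}_2$ to reach $\frac{\norm{\vec v}_2}{\tau-\norm{\vec v}_2}=\frac{1}{\tau-\norm{\vec v}_2}\norm{\vec v}_2$. Adding the two contributions establishes the first inequality.

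For the second inequality I need to control both $\norm{\vec m-\vec z}_2$ and $\norm{\vec v}_2$ by the single quantity $\norm{\vec m+\vec v-\vec z}_2$. The key observation is that $\vec m=P_\M(\vec m+\vec v)$ by Lemma~\ref{thm:PM=piE}, so by the defining minimization property \eqref{eq:2} the point $\vec m$ is the closest point of $\M$ to $\vec m+\vec v$; in particular $\norm{\vec v}_2=\norm{(\vec m+\vec v)-\vec m}_2\le\norm{(\vec m+\vec v)-\vec z}_2$ for the competitor $\vec z\in\M$. Combined with $\norm{\vec m-\vec z}_2\le\norm{\vec m+\vec v-\vec z}_2+\norm{\vec v}_2\le2\norm{\vec m+\vec v-\vec z}_2$, substituting into the first inequality produces the factor $\frac2\tau+\frac{1}{\tau-\norm{\vec v}_2}$. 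I expect the only genuinely nonroutine point to be this last geometric step, where the closest-point characterization of $P_\M$ turns $\norm{\vec v}_2$ into a lower bound for the embedding distance, whereas the operator-norm estimates reduce entirely to the already established Theorems~\ref{thm:B} and~\ref{thm:C2Teil1} together with a standard Neumann bound.
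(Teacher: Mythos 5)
Your proposal is correct and follows essentially the same route as the paper: the triangle inequality through $\dx P_\M(\vec m)$, Theorem~\ref{thm:C2Teil1} for the on-manifold term, Theorem~\ref{thm:B} with the Neumann-series bound $\norm{\vec B_\vec v(\vec I-\vec B_\vec v)^{-1}}_2\le\frac{\norm{\vec v}_2}{\tau-\norm{\vec v}_2}$ for the normal term, and the closest-point property of $P_\M$ to convert $\norm{\vec v}_2$ and $\norm{\vec m-\vec z}_2$ into multiples of $\norm{\vec m+\vec v-\vec z}_2$. Your explicit justification via the minimization property \eqref{eq:2} and the monotonicity of $t\mapsto t/(1-t)$ only makes explicit what the paper leaves implicit.
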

\begin{proof}
  Using Theorem~\ref{thm:B} and Theorem~\ref{thm:C2Teil1} we find
  \begin{align*}
    \label{eq:split}
    \norm{\dx P_M(\vec m+\vec v) - \dx  P_M(\vec z)}_2
    &\le \norm{\dx P_M(\vec m+\vec v)-\dx  P_M(\vec m)}_2
      + \norm{ P_{T_\vec m\M}-P_{T_\vec z\M}}_2 \\
    &\le \norm{\vec B_\vec v\left(\vec I+\vec B_\vec v\right)^{-1} }_2
    + \tfrac{1}{\tau} \norm{\vec m - \vec z}_{2}.
  \end{align*}
  From Lemma~\ref{lemma:curvatureReach} we know that $\norm{\vec B_\vec v} \leq
  \frac{\norm{\vec v}_2}{\tau}<1$. This allows us to bound the second term by
  \begin{equation*}
    \norm{\vec B_{\vec v}\left(\vec I+\vec B_{\vec v}\right)^{-1}}_{2}
    \le \frac{\norm{\vec B_{\vec v}}}{1-\norm{\vec B_{\vec v}}}
    \le \frac{\norm{\vec v}_{2}}{\tau - \norm{\vec v}_{2}},
  \end{equation*}
  which implies the first inequality of the theorem
  \begin{equation}
    \label{eq:8}
    \norm{\dx P_M(\vec m+\vec v)-\dx  P_M(\vec z)}_{2}
    \le \frac{1}{\tau} \norm{\vec m - \vec z}_{2}
    + \frac{1}{\tau- \norm{\vec v}_{2}} \norm{\vec v}_{2}.
  \end{equation}
  Since $\norm{\vec v}_{2}<\tau$ the point $\vec m + \vec v$ is within the tubular
  neighborhood of $\mathcal M$ and, hence
  $\norm{\vec v}_2 \leq \norm{\vec m+\vec v-\vec z}_2$. Together with the
  triangle inequality this gives us
  $\norm{\vec m -\vec z}_2\leq 2 \norm{\vec m +\vec v-\vec z}_2 $.  Including
  these two inequalities into \eqref{eq:8} we obtain the assertion.
\end{proof}

We observe that the constants in Theorem~\ref{thm:Psi_bound} become large if
either the reach of the manifold becomes small or the point $\vec m + \vec v$
is close to the boundary of the tubular neighborhood of $\M$.

\section{Manifold-valued Approximation}
\label{sec:manifold}

In this section we generalize arbitrary approximation operators for vector
valued functions to approximation operators for manifold-valued functions.
To this end we consider for an arbitrary domain $\Omega$
a generic approximation operator
$I_{\R^d} \colon C(\Omega,\R^d)\rightarrow
C(\Omega,\R^d)$. For an embedded manifold
$\M \subset \R^{d}$ with reach $\tau$ and projection operator
\begin{equation*}
  P_{\M} \colon U \to \M, \quad
  U = \{\vec y \in \R^{d} \mid \min_{\vec m \in \M} \norm{\vec y - \vec m} < \tau\},
\end{equation*}
we define the approximation operator
$I_{\M} \colon C(\Omega,\M) \to C(\Omega,\M) $ for manifold-valued functions as
\begin{equation*}
  I_\M f = P_\M\circ I_{\R^d}f.
\end{equation*}
It is important to note that $I_{\M}$ is not defined for all functions
$f \in C(\Omega,\M)$, but only for those for which $I_{\R^{d}} f(x)$
is within the reach of the manifold $\M$, i.e., $\norm{f(\vec x) - I_{\R^{d}}
  f(x)}_{2}\le \tau$ for all $\vec x \in \Omega$.

It is straight forward to see that operator $I_{\M}$ has the same order of
approximation as $I_{\R^{d}}$, c.f.~\cite{Ga18}.

\begin{theorem}\label{satz:wichtig}
  Let $f\in  C(\Omega,\M)$ be a continuous $\M$-valued function such that for
  all $\vec x \in \Omega$, $I_{\R^d}f(\vec x)$ is contained in the reach of
  $\M$. We then have for all $\vec x\in \Omega$
  \begin{equation}\label{eq:err_func}
    \norm{I_\M f(\vec x) - f(\vec x)}_2
    \leq  2\,\norm{I_{\R^{d}} f(\vec x) - f(\vec x)}_2.
  \end{equation}
\end{theorem}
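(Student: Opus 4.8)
The plan is to exploit the variational characterization of $P_\M$ in \eqref{eq:2} directly, so that no differential geometry from the preceding sections is needed. Fix $\vec x \in \Omega$ and abbreviate $\vec y = I_{\R^d} f(\vec x) \in \R^d$, the vector-valued approximant evaluated at $\vec x$. By hypothesis $\vec y$ lies within the reach of $\M$, so by Lemma~\ref{thm:PM=piE} the minimizer $P_\M(\vec y)$ exists and is unique, and $I_\M f(\vec x) = P_\M(\vec y)$ is well defined.

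The first step is to observe that $f(\vec x) \in \M$ is itself an admissible competitor in the minimization problem \eqref{eq:2}. Hence optimality of $P_\M(\vec y)$ gives immediately
\[
  \norm{\vec y - P_\M(\vec y)}_2 \le \norm{\vec y - f(\vec x)}_2.
\]
The second step is to apply the triangle inequality, routing the distance from $P_\M(\vec y)$ to $f(\vec x)$ through the intermediate point $\vec y$:
\[
  \norm{P_\M(\vec y) - f(\vec x)}_2
  \le \norm{P_\M(\vec y) - \vec y}_2 + \norm{\vec y - f(\vec x)}_2
  \le 2\,\norm{\vec y - f(\vec x)}_2,
\]
where the last inequality inserts the bound just obtained. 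Substituting back $\vec y = I_{\R^d} f(\vec x)$ and $P_\M(\vec y) = I_\M f(\vec x)$ yields \eqref{eq:err_func} verbatim.

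I expect essentially no obstacle: the statement is a one-line consequence of the fact that nearest-point projection onto a set cannot increase the distance to any fixed point of that set, combined with the triangle inequality. The only point deserving care is confirming that $P_\M$ is single-valued at $\vec y$, which is exactly what the reach assumption on $I_{\R^d} f(\vec x)$ and Lemma~\ref{thm:PM=piE} provide. The factor $2$ arises solely from comparing the projected point to $f(\vec x)$ rather than to $\vec y$ itself; in contrast to the derivative estimates that follow, it is reach-independent, which is precisely why the function error behaves so differently from the error of the derivatives.
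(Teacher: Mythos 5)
Your proof is correct and follows exactly the paper's argument: the optimality of $P_\M$ against the competitor $f(\vec x)\in\M$ gives $\norm{I_\M f(\vec x)-I_{\R^d}f(\vec x)}_2\le\norm{f(\vec x)-I_{\R^d}f(\vec x)}_2$, and the triangle inequality finishes. No differences worth noting.
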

\begin{proof}
  Since $f$ has function values on $\M$, it follows from the definition of
  $P_\M$ in equation~\eqref{eq:2} for all $\vec x\in \Omega$ that
  \begin{equation*}
    \left\|I_\M f(\vec x) - I_{\R^{d}} f(\vec x)\right\|_2
    \leq \left\|f(\vec x) - I_{\R^{d}} f(\vec x) \right\|_2.
  \end{equation*}
  Because of the triangle inequality and the definition of $I_{\M}$ we have
  \begin{equation*}
    \norm{I_\M f(\vec x) - f(\vec x)}_2
    \leq \norm{I_\M f(\vec x) - I_{\R^{d}} f(\vec x)}_2
    + \norm{I_{\R^{d}} f(\vec x) - f(\vec x)}_2,
    \quad \vec x \in \Omega.\qedhere
  \end{equation*}
\end{proof}

As we will see later, considering the error of the differential, things become more complicated.

\subsection{Approximation Order of the Differential}
\label{sec:app_diff}

In this section we are interested in the approximation error
$\norm{\d I_{\M} f - \d f}_2$ between the differential of the manifold-valued
approximation $\d I_{\M} f$ and the original differential $\d f$. To this end
we assume from now on that both, $f \colon \Omega \to \R^{d}$ and the vector-valued
approximation $\tilde f = I_{\R^{d}} f$, are differentiable.

While the error bound for $I_{\M} f$ is independent of the geometry of the
manifold $\M$, we will see that this is not true for the
differential $\d I_{\M} f$ of the manifold-valued approximation. Moreover, it
is not sufficient to ensure that $\tilde f$ is contained in the reach of
$\M$, but instead, it must be bounded away from the reach by some positive
constant.

\begin{theorem}\label{satz:wichtigableitung}
  Let $\tau$ be the reach of the manifold $\mathcal M$, $\varepsilon<\tau$ and
  $f\in C^1(\Omega, \M)$, such that $\tilde{f}(\vec x) = I_{\R^{d}} f(\vec x)$
  satisfies for all $\vec x\in \Omega$,
  \begin{equation*}
    \norm{f(\vec x)-\tilde{f}(\vec x)}_2\leq \varepsilon
  \end{equation*}
  and, consequently, is contained in the $\varepsilon$-tubular neighborhood of
  $\M$. Then we have for all $\vec x \in \Omega$ the following upper bound on the
  approximation error of the differential $\d I_\M f$,
	\begin{equation}
    \label{eq:error_diff}
    \begin{split}
      \norm{\d I_\M f(\vec x) - \d \,f(\vec x)}_{2}
      \leq \norm{\d \tilde{f}(\vec x) - \d f(\vec x)}_{2} + C \norm{f(\vec x)-\tilde{f}(\vec x)}_2
    \end{split}
  \end{equation}
	
  where $C$ is given by
  \begin{equation*}
    C = \left(\frac 2\tau+\frac{1}{\tau-\varepsilon}\right)
       \left(\norm{\d \tilde{f}(\vec x) - \d f(\vec x)}_{2}
        +\norm{\d f(\vec x)}_{2}\right).
					\end{equation*}
\end{theorem}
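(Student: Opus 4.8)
The plan is to apply the chain rule to $I_\M f = P_\M \circ \tilde f$ and to compare the result with a correspondingly factored expression for $\d f$. First I would note that, for every $\vec x \in \Omega$, the chain rule gives $\d I_\M f(\vec x) = \d P_\M(\tilde f(\vec x))\,\d \tilde f(\vec x)$. On the other hand, since $f$ takes values in $\M$ we have $P_\M \circ f = f$, and because $f(\vec x) \in \M$ the differential $\d P_\M(f(\vec x))$ equals the orthogonal tangential projection $P_{T_{f(\vec x)}\M}$ by \eqref{eq:dPMm}. Applying the chain rule to $f = P_\M \circ f$ therefore yields the identity $\d f(\vec x) = \d P_\M(f(\vec x))\,\d f(\vec x)$. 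This produces two expressions of the same shape — a derivative of $P_\M$ post-composed with a derivative of a function — which I can subtract term by term.

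The second step is the splitting. Abbreviating $A = \d P_\M(\tilde f(\vec x))$ and $B = \d P_\M(f(\vec x))$, I would write
\[
  \d I_\M f(\vec x) - \d f(\vec x)
  = B\bigl(\d \tilde f(\vec x) - \d f(\vec x)\bigr)
  + (A - B)\,\d \tilde f(\vec x).
\]
The reason for arranging the splitting this way — rather than factoring $A$ out of the difference of the differentials — is that $B = P_{T_{f(\vec x)}\M}$ is an \emph{orthogonal} projection, hence $\norm{B}_2 = 1$, so the first term is bounded by $\norm{\d \tilde f(\vec x) - \d f(\vec x)}_2$ with constant exactly one, matching the leading term of the claimed estimate. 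The operator $A$ may have norm exceeding one, so it must not be allowed to multiply the difference of the differentials.

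For the remaining term I would estimate the two factors separately. The factor $\norm{\d \tilde f(\vec x)}_2$ is handled by the triangle inequality, $\norm{\d \tilde f(\vec x)}_2 \le \norm{\d \tilde f(\vec x) - \d f(\vec x)}_2 + \norm{\d f(\vec x)}_2$, which reproduces exactly the second factor of $C$. For the factor $\norm{A - B}_2 = \norm{\d P_\M(\tilde f(\vec x)) - \d P_\M(f(\vec x))}_2$ I invoke Theorem~\ref{thm:Psi_bound}, decomposing $\tilde f(\vec x) = \vec m + \vec v$ with $\vec m = P_\M(\tilde f(\vec x)) \in \M$ and $\vec v = \tilde f(\vec x) - \vec m \in N_{\vec m}\M$ (normal by Lemma~\ref{thm:PM=piE}), and setting $\vec z = f(\vec x)$. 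Since $f(\vec x) \in \M$ is a competitor in the minimization \eqref{eq:2}, we have $\norm{\vec v}_2 = \min_{\vec m'\in\M}\norm{\tilde f(\vec x) - \vec m'}_2 \le \norm{\tilde f(\vec x) - f(\vec x)}_2 \le \varepsilon < \tau$, so $\tilde f(\vec x)$ lies in the tubular neighborhood and the theorem applies; moreover $\norm{\vec m + \vec v - \vec z}_2 = \norm{\tilde f(\vec x) - f(\vec x)}_2$. Theorem~\ref{thm:Psi_bound} together with $\tfrac{1}{\tau - \norm{\vec v}_2} \le \tfrac{1}{\tau - \varepsilon}$ then gives $\norm{A - B}_2 \le \bigl(\tfrac{2}{\tau} + \tfrac{1}{\tau - \varepsilon}\bigr)\norm{f(\vec x) - \tilde f(\vec x)}_2$, and combining the three estimates delivers the assertion.

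I expect the difficulty to be organizational rather than deep. The one genuinely consequential geometric choice is to base the normal decomposition on $\vec m = P_\M(\tilde f(\vec x))$ and not on $f(\vec x)$, since only the former makes $\vec v$ normal and keeps $\norm{\vec v}_2 \le \varepsilon$; the one genuinely consequential algebraic choice is to let the orthogonal projection $B$, rather than $A$, multiply $\d \tilde f - \d f$, which is what pins the leading constant to one.
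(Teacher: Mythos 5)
Your proof is correct and follows essentially the same route as the paper's: the same chain-rule identities for $\d I_\M f$ and $\d f = \d P_\M(f)\,\d f$, the same splitting of the difference so that the orthogonal projection $\d P_\M(f(\vec x))$ (with norm one) multiplies $\d\tilde f - \d f$, and the same appeal to Theorem~\ref{thm:Psi_bound} and the triangle inequality on $\norm{\d\tilde f(\vec x)}_2$. Your explicit identification of $\vec m = P_\M(\tilde f(\vec x))$, $\vec v$, and $\vec z = f(\vec x)$ when invoking Theorem~\ref{thm:Psi_bound} is a slightly more careful rendering of a step the paper leaves implicit, but it is not a different argument.
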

\begin{proof}
  By the chain rule we obtain for all $\vec x \in \Omega$,
  \begin{equation*}
    \d I_\M f(\vec x)=\dx P_\M(\tilde{f}(\vec x))\circ\d\tilde{f}(\vec x)
  \end{equation*}
  and from $P_\M f= f$,
  \begin{equation*}
    \d f(\vec x)
    =  \d (P_\M f)(\vec x)=\dx P_\M (f(\vec x))\circ \d f(\vec x).
  \end{equation*}
  Using the expansion
  \begin{align*}
    \d (I_\M f)(\vec x) - \d f(\vec x)
    =&\dx P_\M(\tilde{f}(\vec x))\circ \d \tilde{f}(\vec x)-
      \dx P_\M (f(\vec x))\circ \d f(\vec x)\\
    =& (\dx P_\M(\tilde{f}(\vec x))-\dx P_\M (f(\vec x)))
      \circ \d \tilde{f}(\vec x)\\
      &+ \dx P_\M f(\vec x) \circ
      ( \d\tilde{f}(\vec x)-\d f(\vec x))
  \end{align*}
  we conclude that
  \begin{align*}
    \norm{\d I_\M f(\vec x) - \d f(\vec x)}_2
    \le& \norm{\dx P_\M(\tilde{f}(\vec x))-\dx P_\M (f(\vec x))}_2
      \norm{\d \tilde{f}(\vec x)}_2\\
     &+ \norm{\dx P_\M (f(\vec x))}_2
      \norm{\d\tilde{f}(\vec x)-\d f(\vec x)}_2.
  \end{align*}
  Since $f(\vec x) \in \M$ and $\norm{f(\vec x) - \tilde f(\vec x)}_2 <
  \varepsilon$ we have by Theorem~\ref{thm:Psi_bound}
  \begin{equation*}
    \norm{\dx P_\M(\tilde{f}(\vec x))-\dx P_\M (f(\vec x))}
    \le \left(\frac 2\tau+\frac{1}{\tau -\varepsilon}\right)
    \norm{f(\vec x)-\tilde{f}(\vec x)}_2.
  \end{equation*}
  Together with the fact that $\norm{\dx P_\M f(\vec x)}_2=1$ we obtain
  \begin{align*}
    \norm{\d (I_\M f)(\vec x) - \d f(\vec x)}_2
    \le &\norm{f(\vec x)-\tilde{f}(\vec x)}_2
    \left(\frac 2\tau+\frac{1}{\tau -\varepsilon}\right)
    \norm{\d \tilde{f}(\vec x)}_2\\
    &+ \norm{\d\tilde{f}(\vec x)-\d f(\vec x)}_2.
  \end{align*}
  This implies the assertion by triangle inequality.
\end{proof}
Asymptotically, as $\norm{f(\vec x)-\tilde{f}(\vec x)}_2 \to 0$ we
  have $C \to \frac{3}{\tau} \norm{\d f(x)}_{2}$. Since for most approximation
  methods the decay of the differential
  $\norm{\d f(\vec x)-\d \tilde{f}(\vec x)}_2$ is one order slower than the
  decay of $\norm{f(\vec x)-\tilde{f}(\vec x)}_2$, we conclude that the right
  hand bound in \eqref{eq:error_diff} is dominated by the first summand and,
  hence, the approximation error of the differential $\d I_\M f$ of the
  manifold-valued approximant coincides asymptotically with the approximation
  error of the vector-valued approximant, as it was already reported in
  \cite{Ga18}.  However, the pre-asymptotic behavior depends strongly on the
  reach of the embedding of the manifold $\mathcal M$.

\subsection{Fourier Interpolation}
\label{sec:four-interp}

In this section we want to illustrate Theorem~\ref{satz:wichtigableitung}
using Fourier-Interpolation as the approximation operator $I_{\R^{d}}$.
More precisely, we define for a function $f \in C(\T, \R^{d})$ on the torus
$\T$ the Fourier partial sum
\begin{equation*}
  I_{\R^{d}} f(t) = S_n f(t) = \sum_{k=-n}^n c_k(f) e^{2\pi i k t}
\end{equation*}
with the vector-valued Fourier coefficients
\begin{equation*}
  c_{k}(f) = \int_{0}^{1} f(x) e^{-2\pi i k x} \d{x}.
\end{equation*}

The Fourier-Interpolation satisfies the following well known approximation
inequalities, cf.~\cite{PlPoStTa18}.

\begin{theorem}
  \label{thm:Sn}
  Let $r\in \N$ with $r\geq2$ and $f\in C^r(\T,\R^d)$. Then
  \begin{align*}
    \norm{f(x)-S_nf(x)}_{2}
    &\leq \frac{\sqrt{2d}}{(2\pi)^r}\,\frac{\sqrt n}{n^r}\,\norm{f^{(r)}}_{L^2(\T),2},\\
    \norm{\d f( x)-\d (S_nf)(x)}_2
    &\leq \frac{\sqrt{2d}}{(2\pi)^{r-1}}\,\frac{\sqrt n}{n^{r-1}}\,\norm{f^{(r)}}_{L^2(\T),2},
  \end{align*}
 with the norm $\norm{f}_{L^2(\T),2}^{2} = \int_{\T} \norm{f(x)}_2^{2} \d x.$

\end{theorem}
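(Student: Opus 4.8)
The plan is to work directly with Fourier coefficients and reduce both estimates to a convergent tail sum. Write $f(x)=\sum_{k\in\Z}c_k(f)\e^{2\pi\im k x}$ with vector coefficients $c_k(f)\in\C^d$, so that the approximation error is exactly the tail of the series, $f(x)-S_nf(x)=\sum_{\abs k>n}c_k(f)\e^{2\pi\im k x}$. Repeated integration by parts, justified because $f\in C^r(\T)$ is periodic, yields the coefficient identity $c_k(f^{(r)})=(2\pi\im k)^r c_k(f)$, and hence the decay $\norm{c_k(f)}_2=(2\pi\abs k)^{-r}\norm{c_k(f^{(r)})}_2$ for $k\neq0$.

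For the first inequality I would bound the tail by the triangle inequality, $\norm{f(x)-S_nf(x)}_2\le\sum_{\abs k>n}\norm{c_k(f)}_2$ (using $\abs{\e^{2\pi\im kx}}=1$), insert the decay, and split the two factors by Cauchy--Schwarz, $\sum_{\abs k>n}\abs k^{-r}\norm{c_k(f^{(r)})}_2\le\big(\sum_{\abs k>n}\abs k^{-2r}\big)^{1/2}\big(\sum_{\abs k>n}\norm{c_k(f^{(r)})}_2^2\big)^{1/2}$. The second factor is at most $\norm{f^{(r)}}_{L^2(\T),2}$ by Parseval, while the first is controlled by the integral comparison $\sum_{k>n}k^{-2r}\le\int_n^\infty x^{-2r}\dx x=\frac{1}{(2r-1)n^{2r-1}}$, so that (doubling for the negative indices) $\big(\sum_{\abs k>n}\abs k^{-2r}\big)^{1/2}\le\sqrt{\tfrac{2}{2r-1}}\,n^{-(2r-1)/2}$. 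Since $n^{-(2r-1)/2}=\sqrt n/n^{r}$ and $\tfrac{2}{2r-1}\le2$ for $r\ge1$, this already gives the order $\sqrt n/n^r$ with constant $\sqrt2/(2\pi)^r$; the factor $\sqrt d$ in the stated bound appears only if one instead invokes the scalar estimate of \cite{PlPoStTa18} for each coordinate and combines them via $\norm{v}_2\le\sqrt d\,\norm{v}_\infty$.

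The derivative estimate follows from the same computation applied to the termwise differentiated series $\d(S_nf)(x)=\sum_{k=-n}^n(2\pi\im k)c_k(f)\e^{2\pi\im kx}$: each coefficient is multiplied by $2\pi\im k$, which cancels one power of $2\pi\abs k$ and lowers the exponent in the tail sum to $2(r-1)$. The only place the hypothesis $r\ge2$ enters is here --- it is exactly what guarantees $2(r-1)>1$, so that $\sum_{\abs k>n}\abs k^{-2(r-1)}$ still converges and the integral comparison yields the rate $n^{-(2r-3)/2}=\sqrt n/n^{r-1}$. I expect the only delicate point to be this bookkeeping of exponents and convergence conditions in the tail-sum integral bound, together with tracking the numerical constant; everything else is the routine Fourier-coefficient argument, so I would keep the write-up short and cite \cite{PlPoStTa18} for the underlying scalar estimate.
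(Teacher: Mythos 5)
Your proposal is correct, and it is essentially a self-contained version of what the paper outsources to the literature: the paper's proof consists of citing \cite[Theorem 4.3]{Co16} for the first inequality, inserting a factor $\sqrt d$ to pass from the scalar to the vector-valued case, and obtaining the second inequality from the commutation $\d (S_nf)=S_n(\d f)$ applied to $\d f\in C^{r-1}(\T,\R^d)$. Your tail-sum argument (coefficient decay $c_k(f^{(r)})=(2\pi\im k)^rc_k(f)$, triangle inequality, Cauchy--Schwarz, Parseval, integral comparison) is exactly the standard mechanism behind that cited estimate, and your bookkeeping is right: the first bound needs only $2r>1$, while the derivative bound needs $2(r-1)>1$, which is where $r\ge 2$ enters --- the same place it enters in the paper's reduction, since there the first bound must be applied to $\d f$ with regularity $r-1\ge 1$. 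What your route buys is a slightly sharper constant: applying Cauchy--Schwarz directly to the vector-valued coefficients gives $\sqrt{2/(2r-1)}\le\sqrt 2$ in place of $\sqrt{2d}$, because the dimension never needs to appear when Parseval is used on $\norm{c_k(f^{(r)})}_2$ directly rather than coordinatewise; this of course still implies the stated inequalities. What the paper's route buys is brevity and a one-line derivation of the second estimate from the first via $\d(S_nf)=S_n(\d f)$, which you instead re-derive by shifting one power of $2\pi\abs k$ in the tail sum --- the two computations are identical in content.
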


\begin{proof}
  The first bound can be found in \cite[Theorem 4.3]{Co16}. The second bound
  follows from $\d (S_nf)=S_n(\d f)$ and the fact that the regularity of
  $\d f$ is one less than the regularity of $f$. The factor $\sqrt{d}$ comes
  from the fact that the function $f$ maps in the d-dimensional space.
\end{proof}

In \cite[Theorem 1.39]{PlPoStTa18} a similar bound for the $L^\infty(\T)$-norm
can be found. Using the Fourier partial sum operator as the approximation
operator $I_{\R^{d}}$ Theorem~\ref{satz:wichtigableitung} becomes the
following.

\begin{theorem}
  \label{thm:FPS}
  Let $\mathcal M \subset \R^{d}$ be a submanifold with reach $\tau>0$ and
  $f\in C^r(\T,\M)$ an $r \geq 2$ times differentiable function with values in
  $\mathcal M$. Let, furthermore, $\varepsilon < \tau$ be an auxiliary
  constant and the bandwidth $n$ of the Fourier partial sum $S_{n} f$ at least
  such that $\norm{f( x)-S_{n} f(x)}_{2} \le \varepsilon$ for all $x\in \T$, i.e.,
  \begin{equation}
    \label{eq:error_in_reach}
    n^{r-\tfrac 12} \ge \frac{C_{1} \norm{f^{(r)}}_{L^2(\T),2}}{\varepsilon}
    \text{ with }
    C_{1} = \frac{ \sqrt{2d}}{(2\pi)^r}.
  \end{equation}
  Then the projection $P_{\mathcal M} \circ S_{n} f$ of the Fourier partial
  sum satisfies for all $x \in \T$,
  \begin{align}
    \label{eq:error_fourier}
    \left\|P_{\mathcal M} \circ S_{n} f(x) - f(x)\right\|_2
    &\leq  2 \,C_{1} \,n^{\tfrac12 -r}\,\norm{f^{(r)}}_{L^2(\T),2},
  \end{align}
  whereas for its differential $\d ( P_{\mathcal M}
  \circ S_{n} f)$ we obtain
  \begin{equation}
    \label{eq:error_diff_fourier}
    \begin{split}
      \left\|\d ( P_{\mathcal M} \circ S_{n} f)(x) - \d f(x)\right\|_2
      &\leq
      2\pi\, C_{1}\, \,n^{\tfrac 32-r}\, \norm{f^{(r)}}_{L^2(\T),2}\\
      &+C_{2}\, n^{\tfrac 12-r} \,\norm{f^{(r)}}_{L^2(\T),2}^{2},
    \end{split}
  \end{equation}
  with the constant
  \begin{align*}
    C_{2} = C_{1}\,(\tfrac{2}{\tau} + \tfrac{1}{\tau - \varepsilon})\,(1 + 2\pi\, C_{1}^{2}\, n^{\frac32-r}).
  \end{align*}
	
\end{theorem}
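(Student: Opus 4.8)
The plan is to specialize the general estimates of Theorem~\ref{satz:wichtig} and Theorem~\ref{satz:wichtigableitung} to the operator $I_{\R^d} = S_n$ by feeding in the two Fourier estimates of Theorem~\ref{thm:Sn}. The first step is to check that the bandwidth condition \eqref{eq:error_in_reach} is precisely what is needed to place $S_n f$ inside the $\varepsilon$-tubular neighborhood: solving $C_1 n^{1/2-r}\norm{f^{(r)}}_{L^2(\T),2} \le \varepsilon$ for $n$ reproduces \eqref{eq:error_in_reach}, so that the hypothesis $\norm{f(x)-\tilde f(x)}_2 \le \varepsilon$ of Theorem~\ref{satz:wichtigableitung} holds pointwise for every $x\in\T$ with $\tilde f = S_n f$.

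The function bound \eqref{eq:error_fourier} is then immediate: inequality \eqref{eq:err_func} of Theorem~\ref{satz:wichtig} contributes the factor $2$ over the linear error, and the first estimate of Theorem~\ref{thm:Sn} bounds that error by $C_1 n^{1/2-r}\norm{f^{(r)}}_{L^2(\T),2}$. For the derivative bound \eqref{eq:error_diff_fourier} I would start from \eqref{eq:error_diff}. The leading summand $\norm{\d\tilde f(x)-\d f(x)}_2$ is controlled directly by the second estimate of Theorem~\ref{thm:Sn}, namely $2\pi C_1 n^{3/2-r}\norm{f^{(r)}}_{L^2(\T),2}$, which is exactly the first term on the right-hand side of \eqref{eq:error_diff_fourier}. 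It then remains to expand the second summand $C\,\norm{f(x)-\tilde f(x)}_2$, substituting $\norm{f-\tilde f}_2 \le C_1 n^{1/2-r}\norm{f^{(r)}}_{L^2(\T),2}$ together with the same bound for $\norm{\d\tilde f-\d f}_2$ inside the factor $\bigl(\norm{\d\tilde f-\d f}_2+\norm{\d f}_2\bigr)$ that makes up $C$; this accounts for the $n^{3/2-r}$ contribution carried by $C_2$.

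The step that needs the most care is the appearance of $\norm{\d f(x)}_2$ in the constant $C$. Since this is the derivative of the exact target rather than of an approximant, it must itself be bounded in terms of $\norm{f^{(r)}}_{L^2(\T),2}$ in order to produce the quadratic factor $\norm{f^{(r)}}_{L^2(\T),2}^2$ in \eqref{eq:error_diff_fourier}. I would obtain this by writing $\d f(x)=\sum_{k\neq 0} 2\pi \im k\, c_k(f)\, e^{2\pi \im k x}$, factoring out the weight $(2\pi\abs{k})^{r}$ and applying the Cauchy--Schwarz inequality, so that $\norm{\d f(x)}_2 \le \bigl(\sum_{k\neq 0}(2\pi\abs{k})^{-2(r-1)}\bigr)^{1/2}\norm{f^{(r)}}_{L^2(\T),2}$. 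The prefactor is finite precisely because $r\ge 2$, which is exactly where that hypothesis enters.

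Collecting the constants $(\tfrac2\tau+\tfrac1{\tau-\varepsilon})$ and $C_1$ and the two powers of $n$ then assembles a bound of the form $C_2\, n^{1/2-r}\norm{f^{(r)}}_{L^2(\T),2}^2$ with a constant $C_2$ of the stated shape. I expect no conceptual obstacle beyond this $\norm{\d f}_2$ estimate; the remainder is the routine bookkeeping of the factors $C_1$, $(\tfrac2\tau+\tfrac1{\tau-\varepsilon})$ and the exponents of $n$, which must be tracked carefully so that the two contributions combine into the single factor $(1+2\pi C_1^2 n^{3/2-r})$ appearing in $C_2$.
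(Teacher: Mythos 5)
Your proposal follows essentially the same route as the paper: verify via Theorem~\ref{thm:Sn} that condition \eqref{eq:error_in_reach} places $S_nf$ in the $\varepsilon$-tube, apply Theorem~\ref{satz:wichtig} for \eqref{eq:error_fourier}, and substitute the two Fourier estimates into Theorem~\ref{satz:wichtigableitung} for \eqref{eq:error_diff_fourier}. The one step you elaborate beyond the paper is the bound $\norm{\d f(x)}_2 \le \norm{f^{(r)}}_{L^2(\T),2}$, which the paper simply asserts for periodic functions and which your Cauchy--Schwarz argument with the weight $(2\pi\abs{k})^{r}$ correctly establishes (the prefactor is indeed at most $1$ for $r\ge 2$), so the proposal is correct.
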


\begin{proof}
  Together with Theorem~\ref{thm:Sn} condition~\eqref{eq:error_in_reach}
  ensures that
  \begin{equation*}
    \norm{f(x) - S_{n} f(x)}_{2} \le \varepsilon < \tau, \quad x \in \T
  \end{equation*}
  and, hence, $S_nf(x)$ has distance less than the reach to $\M$ for all
  $x\in \T$.  This allows us to apply Theorem~\ref{satz:wichtig} in
  conjunction with Theorem~\ref{thm:Sn} to conclude \eqref{eq:error_fourier}.

  For the approximation error of the derivative we have by
  Theorem~\ref{satz:wichtigableitung}
  \begin{equation*}
    \norm{\d ( P_{\mathcal M} \circ S_{n} f)(x) - \d f(x)}_{2}
    \le \norm{\d (S_{n} f)(x) - \d f(x)}_{2} + C \norm{S_{n} f(x) - f(x)}_{2}
  \end{equation*}
  with
  \begin{equation*}
    C = \left(\tfrac{\tau}2 + \tfrac{1}{\tau-\varepsilon}\right) \, \left(\norm{\d (S_{n} f)(x) -
      \d f(x)}_{2} + \norm{\d f(x)}_{2}\right).
  \end{equation*}
  Together with Theorem~\ref{thm:Sn} this yields
  \begin{equation*}
    \norm{\d ( P_{\mathcal M} \circ S_{n} f)(x) - \d f(x)}_{2}
    \le 2\pi\, C_{1} \,n^{\frac{3}{2}-r} \,\norm{f^{(r)}}_{L^2(\T),2} + C \,C_{1}\, n^{\frac{1}{2}-r}\,\norm{f^{(r)}}_{L^2(\T),2}
  \end{equation*}
  and
  \begin{align*}
    C &\le (\tfrac{\tau}2 + \tfrac{1}{\tau-\varepsilon})\,
        (2\pi\,C_{1}\, n^{\frac{3}{2}-r} \norm{f^{(r)}}_{L^{2}(\T),2} + \norm{\d f(x)}_{2})\\
    & \le (\tfrac{\tau}2 + \tfrac{1}{\tau-\varepsilon})\,
        (2 \pi\, C_{1} \,n^{\frac{3}{2}-r}  +1 )\norm{f^{(r)}}_{L^{2}(\T),2},
  \end{align*}
  where we have used the fact that
  $\norm{\d f(x)}_2 \le \norm{f^{(r)}}_{L^{2}(\T),2}$ for periodic functions. Setting
  \begin{equation*}
   C_{2} = C_{1} (\tfrac{\tau}2 + \tfrac{1}{\tau-\varepsilon}) (1+2 \pi \,C_{1}\,
  n^{\frac{3}{2}-r}  )
  \end{equation*}
  yields the assertion.

\end{proof}

Since for $n \to \infty$ we have
$C_{2} \to C_{1}(\frac{2}{\tau}+\frac{1}{\tau-\varepsilon})$.
Theorem~\ref{thm:FPS} states that the approximation order of the differential
of the manifold-valued Fourier partial sum operator coincides with the
approximation order of the vector-valued operator. In the
preasymptotic setting, however, also the second summand with the faster rate
$n^{\frac{1}{2}-r}$ is relevant. The constant of this second summand becomes
large if the point-wise approximation error is close to the reach $\tau$ of
the embedding.

\section{Examples}\label{sec:examples}
In this section we apply our findings to two real world examples of
manifold-valued approximation. Both examples are related to the analysis of
crystalline materials. In the first example we consider functions that relate
propagation directions of waves to polarization directions and in the second
example we consider functions that relate points within crystalline specimen
to the local orientation of its crystal lattice. Both examples have
  been realized using Matlab Toolbox \texttt{MTEX 5.8},
  cf.~\cite{MaHiSc11}. The corresponding scripts and data files can be found
  at
  \url{https://github.com/mtex-toolbox/mtex-paper/tree/master/manifoldValuedApproximation}.

\subsection{Wave Velocities}
\label{sec:wave-velocities}

In crystalline materials the propagation velocity and polarization direction
of waves is often isotropic, i.e., it depends on the propagation direction
relative to the crystal lattice. This posses an important issue in seismology
where one analyzes the distribution of earthquake waves in order to get a
deeper understanding of the core of the earth,
cf. \cite{mainprice:hal-00408321}. Each earthquake wave decomposes into a
p-wave and two perpendicular shear-wave components. The polarization vectors
of p-wave components as well as of the two s-wave components depend on the
propagation direction of the wave relative to the crystal,
cf. \cite{nye85, MaHiSc11}. Mathematically, the directional dependency of the
polarization directions from the propagation direction is modeled as function
\begin{equation*}
 f \colon \mathbb S^{2} \to \mathbb RP^{2}
\end{equation*}
from the two-sphere $\mathbb S^{2}$ into the two--dimensional projective space
$\mathbb RP^{2}$. Our goal is to approximate this function from finite
measurements $\vec y_{n} = f(\vec x_{n}) \in \mathbb RP^{2}$, $n=1,\ldots,N$.

To this end, we identify the two dimensional projective space $\mathbb RP^{2}$
with the quotient $\mathbb S^{2}/{\sim}$ with respect to the equivalence
relation $\vec x \sim -\vec x$ and consider the embedding
$\mathcal E \colon \mathbb S^{2}/{\sim} \to \R^{3 \times 3}$,
$\mathcal E(\vec x) = \vec x \vec x^{\top}$.  The reach of this embedding is
$\tau = \tfrac{1}{\sqrt{2}}$ as we show in the following lemma.

\begin{lemma}
  The two dimensional projective space $\mathbb RP^{2}$ embedded into the
  space of symmetric $3 \times 3$ matrices
  $\mathcal E(\mathbb S^{2}/\!\! \sim) \subset \R^{3 \times 3}$ has the reach
  $\tau = \frac{1}{\sqrt{2}}$.
\end{lemma}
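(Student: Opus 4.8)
The plan is to identify the embedded manifold $\M = \mathcal E(\S^{2}/\!\sim)$ with the set of rank-one orthogonal projectors $\{\vec x\vec x^\top : \vec x\in\S^{2}\}$ inside the space of symmetric $3\times 3$ matrices, equipped with the trace (Frobenius) inner product $\dotprod{A}{B}=\tr(A^\top B)$. Expanding $\tr(\vec x\vec x^\top\vec y\vec y^\top)=\dotprod{\vec x}{\vec y}^2$ gives $\norm{\vec x\vec x^\top-\vec y\vec y^\top}_F^2 = 2\bigl(1-\dotprod{\vec x}{\vec y}^2\bigr)$, which vanishes exactly when $\vec x=\pm\vec y$, so $\mathcal E$ is injective on $\mathbb{RP}^2$ and realizes $\M$ as a smooth compact surface. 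The conjugation action $A\mapsto \vec R A\vec R^\top$, $\vec R\in\SO$, acts transitively on $\M$ and isometrically on the ambient space; both $\M$ and any set defined spectrally are invariant under it, a symmetry I will use to reduce every distance computation to diagonal matrices.

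The key step is to make the projection $P_\M$ explicit. For a symmetric matrix $A$ with eigenvalues $\lambda_1\ge\lambda_2\ge\lambda_3$, minimizing $\norm{A-\vec w\vec w^\top}_F^2 = \norm{A}_F^2 - 2\,\vec w^\top A\vec w + 1$ over unit vectors $\vec w$ is the same as maximizing the Rayleigh quotient $\vec w^\top A\vec w$; the maximizer is the leading eigenvector, and it is unique precisely when $\lambda_1>\lambda_2$. Hence $P_\M(A)=\vec v_1\vec v_1^\top$ is single-valued away from the set $\mathrm{Med}=\{A:\lambda_1(A)=\lambda_2(A)\}$ of matrices whose largest eigenvalue is not simple, and this set is exactly the medial axis of $\M$ (for $\lambda_1=\lambda_2$ the nearest projectors form a whole circle in the top eigenspace). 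Since the reach equals the distance from $\M$ to its medial axis, it remains to compute $\inf_{A\in\mathrm{Med}} d(A,\M)$.

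Using the $\SO$-symmetry I reduce to diagonal matrices $A=\diag(\lambda,\lambda,\mu)$ with $\lambda\ge\mu$, so that the repeated eigenvalue is the largest. For such $A$ the nearest projector is any $\vec w\vec w^\top$ with $\vec w$ in the top eigenspace, at squared distance $\norm{A}_F^2-2\lambda+1 = 2\lambda^2+\mu^2-2\lambda+1$. Minimizing this elementary quadratic over $\{\lambda\ge\mu\}$ yields the interior minimum $(\lambda,\mu)=(\tfrac12,0)$ with value $\tfrac12$, attained at $A=\diag(\tfrac12,\tfrac12,0)=\tfrac12\bigl(\mathcal E(\vec e_1)+\mathcal E(\vec e_2)\bigr)$, the midpoint of two orthogonal projectors. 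This point lies at distance $\tfrac1{\sqrt2}$ from $\M$, which exhibits the reach-limiting bottleneck between orthogonal lines and gives $\tau=\tfrac1{\sqrt2}$.

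I also need to check that the skew-symmetric directions of the ambient $\R^{3\times 3}$ do not shrink the reach: because a projector is symmetric, the nearest projector to $A$ depends only on $\mathrm{sym}(A)$ and the skew part merely adds the constant $\norm{A-\mathrm{sym}(A)}_F^2$ to every squared distance. Thus the medial axis in $\R^{3\times 3}$ is $\{\lambda_1(\mathrm{sym}(A))=\lambda_2(\mathrm{sym}(A))\}$, whose closest point to $\M$ still has vanishing skew part, so the value $\tfrac1{\sqrt2}$ is unchanged. The main obstacle in this argument is the clean identification of the medial axis with $\{\lambda_1=\lambda_2\}$ — equivalently, the proof that $P_\M$ is single-valued exactly when the top eigenvalue is simple — together with the standard fact that the reach equals the distance to that set; the remaining two-variable minimization is routine.
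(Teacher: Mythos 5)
Your proof is correct, and it takes a genuinely different route from the paper. The paper invokes the formula $\tau = \inf_{\vec x\neq\vec y}\norm{\E(\vec x)-\E(\vec y)}^2/\bigl(2\,d(\E(\vec x)-\E(\vec y),T_{\vec y})\bigr)$ from Aamari et al., reduces to $\vec y=\vec e_1$ by $\SO$-invariance, and finds that the ratio is identically $1/\sqrt 2$ — a short two-point computation on the manifold itself, but one that leans on the cited characterization of the reach. You instead work in the ambient space: you make $P_\M$ explicit as the leading-eigenvector projector via the Rayleigh quotient, identify the medial axis as $\{\lambda_1(\mathrm{sym}(A))=\lambda_2(\mathrm{sym}(A))\}$, and minimize the distance to $\M$ over that set, correctly disposing of the skew-symmetric directions. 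Your route is more self-contained and more informative geometrically — it exhibits the reach-attaining point $\tfrac12(\E(\vec e_1)+\E(\vec e_2))$ as the midpoint of two orthogonal projectors — at the cost of a two-variable minimization instead of a one-variable one. The only step you should make explicit is the bridge between ``reach equals the distance from $\M$ to the medial axis'' (Federer's formulation, which you use) and the paper's definition of $\tau$ via injectivity of $E$ on the normal bundle: a point with two nearest points $\vec m\neq\vec m'$ yields two colliding normal segments by the first-order condition $\vec u-\vec m\in N_{\vec m}\M$ (this is exactly the argument in the paper's Lemma~\ref{thm:PM=piE}), and conversely colliding normal segments of length below Federer's reach contradict uniqueness of the projection; with that one sentence added, your argument is complete.
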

\begin{proof}
  Following~\cite[Thm. 2.2]{aamari19}, we can estimate the reach by the
  following infimum
  \begin{equation}\label{eq:tau}
    \tau
    = \inf_{\vec x\neq \vec y\in \mathbb S^{2}/{ \sim}}
    \frac{\norm{\E(\vec x)-\E(\vec y)}_2^2}
    {2\, d(\E(\vec x)-\E(\vec y),T_{ \vec y}\mathbb S^{2}/{\sim})}.
  \end{equation}
  Since in our setting in both spaces, $\mathbb S^{2}/{ \sim}$ and
  $\R^{3\times 3}$ the metric is invariant with respect to the action of
  $\SO$, it suffices to take the infimum for $\vec y=\vec e_1=(1, 0, 0)^\top$.
  We define the other canonical basis vectors in $\R^3$ as
  $\vec e_2=(0, 1, 0)^\top$ and $\vec e_3=(0, 0, 1)^\top$ The tangent vector
  space in $\vec e_1$ is then given by these tangent vectors:
  \begin{equation*}
    T_{(1, 0,0)^\top}\mathbb S^{2}/{ \sim}
    = \lin\left\{\frac{1}{\sqrt 2}(\vec e_2\vec e_1^\top
      + \vec e_1\vec e_2^\top),\frac{1}{\sqrt 2}(\vec e_3\vec e_1^\top
      + \vec e_1\vec e_3^\top)\right\}.
  \end{equation*}
  Hence, we write
  $\vec v = \E(\vec x)-\E(\vec e_1)=\vec x\vec x^\top-\vec e_1\vec e_1^\top$ and therefore we
  can calculate the reach by
  \begin{align*}
    \tau
    &=\inf_{\vec x\neq \vec e_1\in \mathbb S^{2}/{\sim}}
      \frac{\norm{\vec v}_2^2}{2\, \norm{\vec v-\langle \vec v,\frac{1}{\sqrt
      2}\vec e_2\vec e_2^\top\rangle
      -\langle \vec v,\frac{1}{\sqrt 2}\vec e_3\vec e_3^\top\rangle}_2}\\
    &=\inf_{\vec x\neq \vec e_1\in \mathbb S^{2}/{\sim}}\frac{2-2\,x_1^2}{2\,\sqrt{2-2\,x_1^2-2x_1^2x_2^2-2x_1^2x_3^2}}\\
    &=\inf_{\vec x\neq \vec e_1\in \mathbb
      S^{2}/{\sim}}\frac{1-x_1^2}{\sqrt{2}\,\sqrt{(1-x_1^2)^2}}=\frac{1}{\sqrt 2},
  \end{align*}
  which finishes the proof.
\end{proof}

The calculation of the reach gives us the constants in
Theorem~\ref{satz:wichtig} and \ref{satz:wichtigableitung} for this specific
manifold.

Let $\mathcal E \circ f \colon \mathbb S^{2} \to \R^{3 \times 3}$ be the
embedded function. A common method, cf.~\cite{Mic13}, of approximating the
spherical function $\mathcal E \circ f$ is by linear combinations of spherical
harmonics $Y_{\ell,k}$, $\ell=0,\ldots,L$, $k = -\ell,\ldots,\ell$ up to a
fixed bandwidth $L$,
\begin{equation*}
  S_{L} (\mathcal E \circ f)(\vec x)
  = \sum_{\ell=0}^{L} \sum_{k=-\ell}^{\ell} c_{\ell,k} Y_{\ell,k}(\vec x),
\end{equation*}
where the coefficients $c_{\ell,k} \in \R^{3 \times 3}$,
$k=-\ell,\ldots,\ell$, $\ell=0,\ldots,L$ are elements of the embedding space.
This expansion in spherical harmonics coincides with the linear
  Fourier operator in Section~\ref{sec:four-interp} defined for functions over
  the sphere.  In our little example we simply assume that the measurement
points $\vec x_{n}$ together with some weights $\omega_{n}$ form a spherical
quadrature rule up to degree $2L$ which allows us to determine the Fourier
coefficients $c_{\ell,k}$, by
\begin{equation*}
  c_{k,\ell} = \sum_{n=1}^{N} \omega_{n} \mathcal E(\vec y_{n}) \overline{Y_{\ell,k}(\vec x_{n})}.
\end{equation*}

Fig.~\ref{fig:wave_a} displays the theoretical polarization directions of an
Olivine crystal in dependency of the propagation direction. We observe the
points of singularity, marked by the black squares. In order to
approximate this non-smooth function we fixed the bandwidth $L=8$
and used 144 Chebyshev quadrature nodes
$\vec x_{1},\ldots,\vec x_{144} \in \mathbb S^{2}$ as sampling points,
cf.~\cite{Gr_pointsquad}. These quadrature nodes are approximately
  equispaced and are displayed as red lines in Fig.~\ref{fig:wave_a}.

The approximated function $P_{\R P^{2}} \circ S_{8}(\mathcal E \circ f)$ is
depicted in Fig.~\ref{fig:wave_b} and shows good approximation with the
original function away from the singularity points. This is supported
  by a plot of the point-wise error
  $\norm{ f(\vec x) - P_{\R P^2}\circ S_8 (\E \circ f)(\vec x)}_2$ in
  Fig.~\ref{fig:wave_c}. Note that we measure the error in the Euclidean norm
  of the $9$-dimensional embedding space $\R^9$, which is for
  $\vec x,\vec y\in \R P^2 $ equal to the Frobenius norm
  $\norm{\vec x\vec x^\top -\vec y\vec y^\top}_F$. Compared to this, Figure~\ref{fig:wave_d} 
	shows the error of the linear approximation $\norm{ f(\vec x) - S_8 (\E \circ f)(\vec x)}$, 
	which is half of the error bound from~\ref{satz:wichtig}. Additionally, we marked the areas where the 
	residual is bigger than the reach blue. In this regions our Theorems are not 
	applicable, since there the assumption $I_{\R^d}f$ is within the reach is not met.

\begin{figure}[t]
  \centering
  \begin{subfigure}[T]{0.4\textwidth}
    \includegraphics[width=0.8\textwidth]{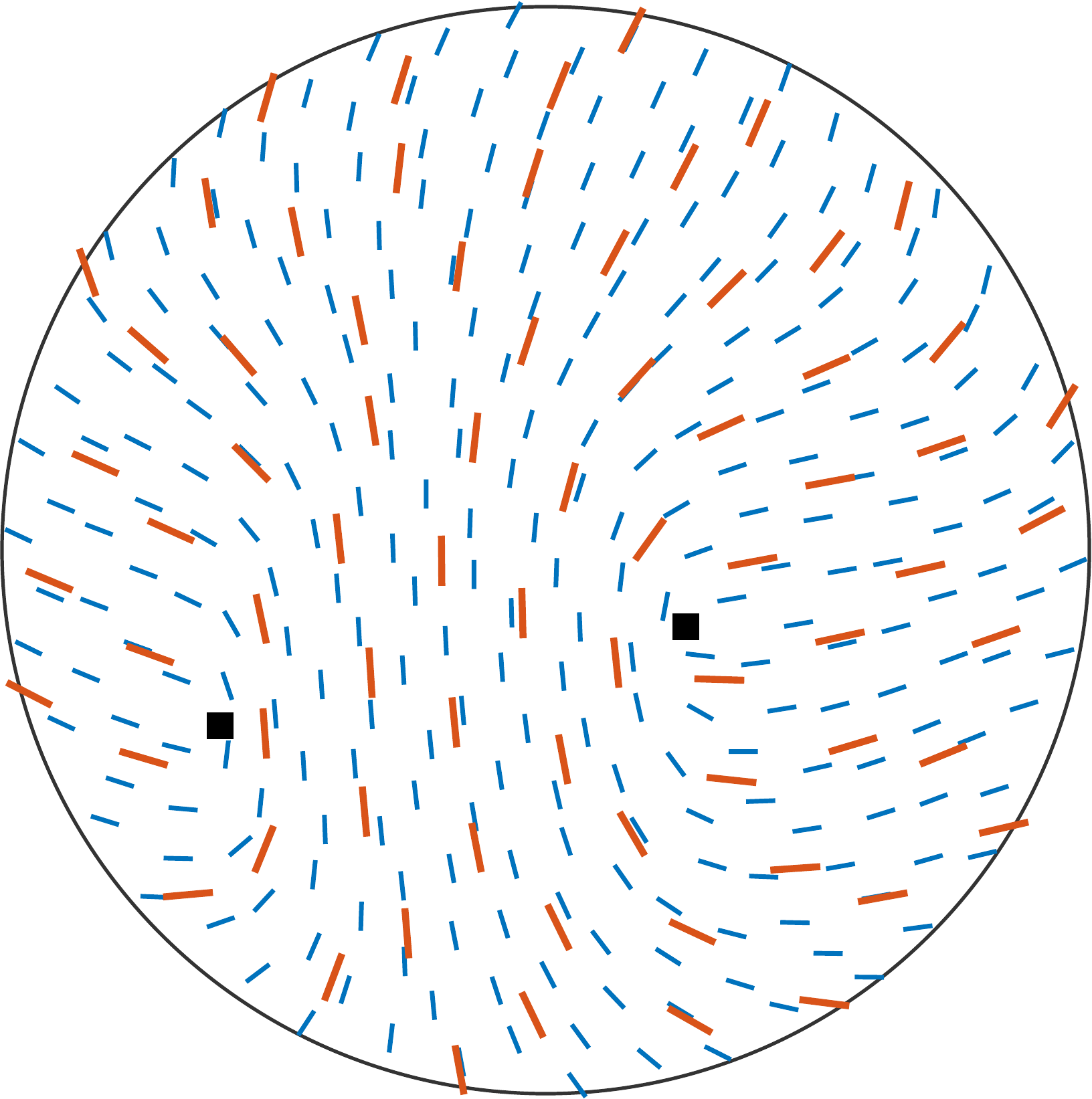}
		\centering
    \subcaption{$f \colon \sphere^{2} \to \R P^{2}$}
    \label{fig:wave_a}
  \end{subfigure}
  \begin{subfigure}[T]{0.4\textwidth}
    \includegraphics[width=0.8\textwidth]{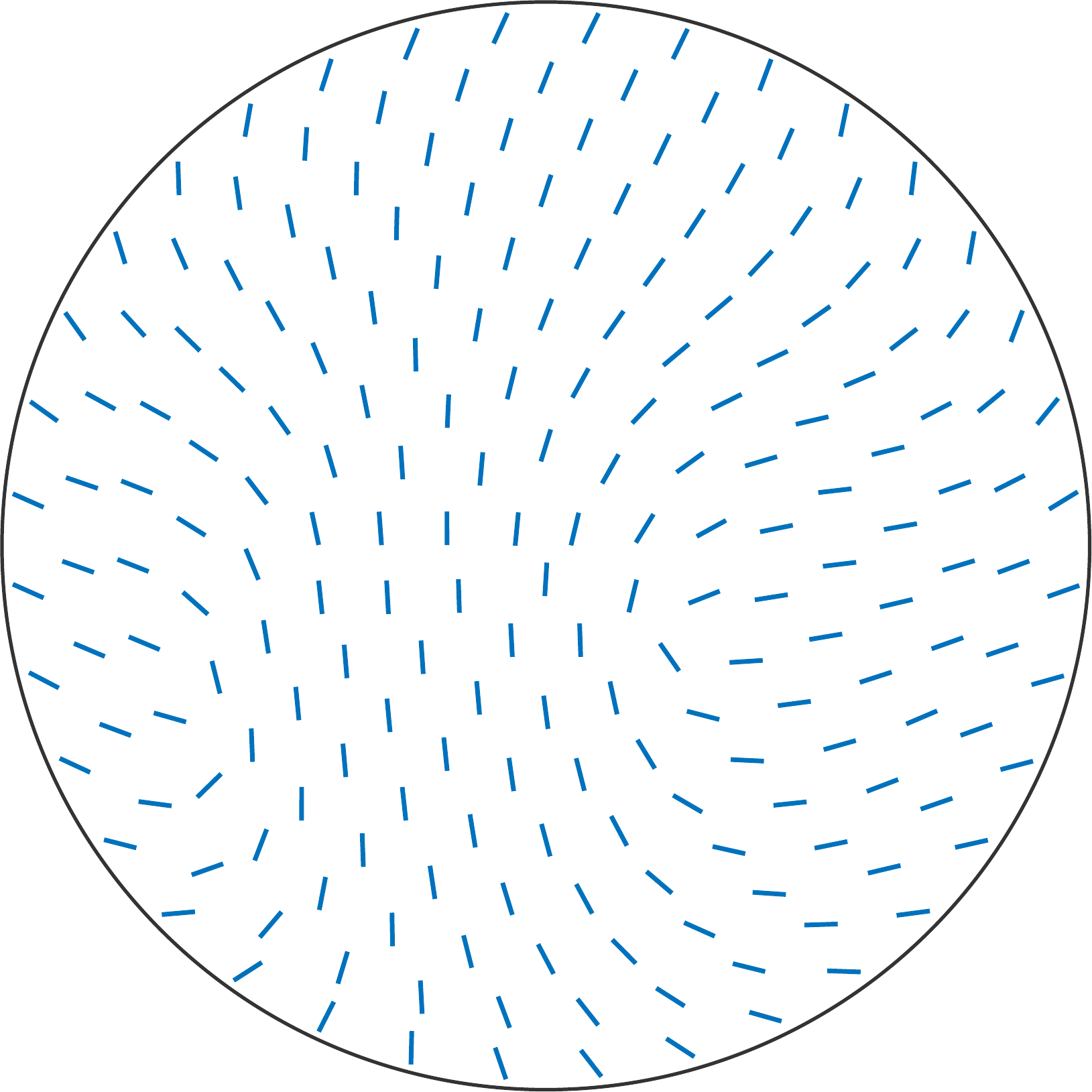}
		\centering
    \subcaption{$P_{\R P^2}\circ S_8 (\E \circ f)$}
    \label{fig:wave_b}
  \end{subfigure}
  \begin{subfigure}[T]{0.4\textwidth}
    \includegraphics[height=0.8\textwidth]{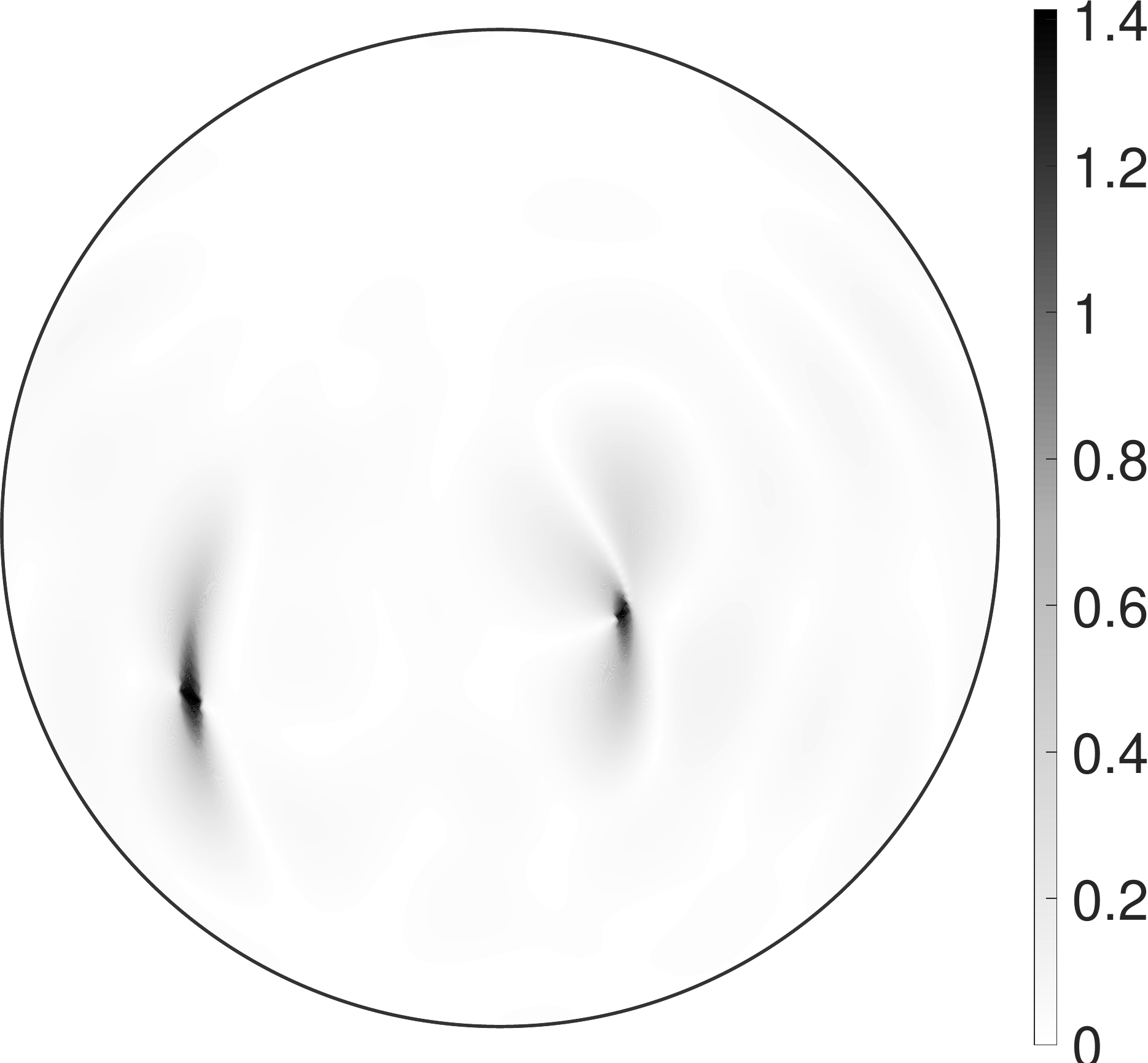}
		\centering
    \subcaption{$\norm{f-P_{\R P^2}\circ S_8 (\E \circ f)}_{2}$.}
    \label{fig:wave_c}
  \end{subfigure}
	  \begin{subfigure}[T]{0.4\textwidth}
    \includegraphics[height=0.8\textwidth]{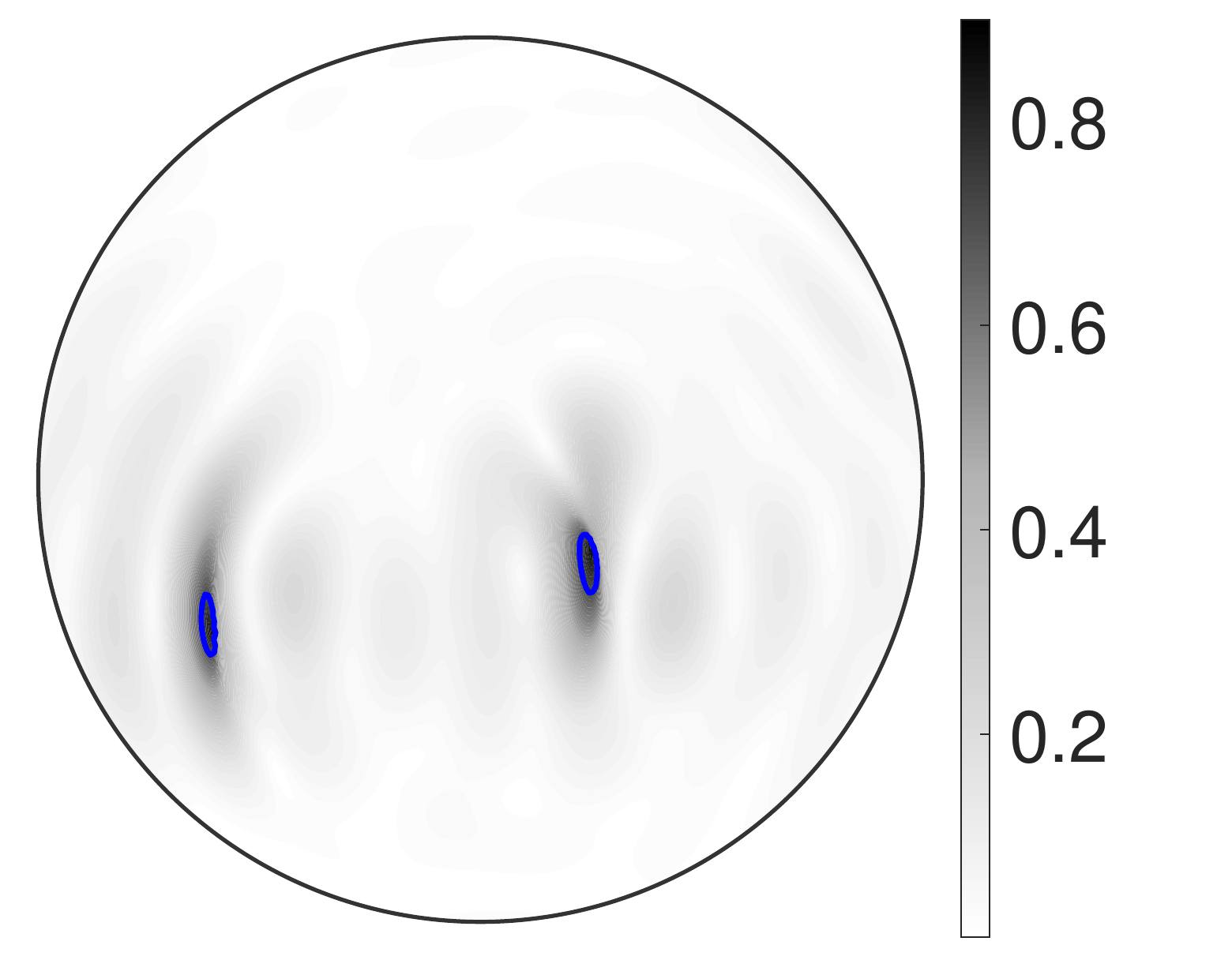}
		\centering
    \subcaption{$\norm{f- S_8 (\E \circ f)}_{2}$.}
    \label{fig:wave_d}
  \end{subfigure}
  \caption{Polarization directions of the fastest shear wave with
      respect to the propagation direction $\vec x \in \sphere^{2}$ plotted as
      vector fields on the upper hemisphere. The left upper image (a) displays the true
      polarization directions $f(\vec x) \in \R P^{2}$. The right upper image (b)
      is the harmonic approximation $P_{\R P^2}\circ S_8 (\E \circ f)(\vec x)$
      using the sampling points marked red in (a). The lower left image (c)
      displays the norm of the point-wise residual. The lower right image (d) 
			displays the error of the linear approximation, i.e. exactly half of the 
			upper bound from Theorem~\ref{satz:wichtig}. We marked red the regions 
			where this residual is bigger than the reach of the manifold.}
  \label{fig:wave}
\end{figure}

We determined the derivatives of $f$ numerically by choosing a basis
  ${\vec t_1,\vec t_2}$ in the tangent space $T_{\vec x}\S^2$ and
  approximating the columns $\vec v_i \in \R^{9}$, $i=1,2$ of
  $\d f(\vec x)\in \R^{9\times 2}$ by the difference quotients
  \begin{equation*}
    \vec v_i=\frac{1}{h} \left(f\bigl(\tfrac{\vec x+h\,\vec t_i}{\norm{\vec x+h\,\vec t_i}_2}\bigr)-f(\vec x)\right),
  \end{equation*}
  with $h=10^{-6}$. The norm of the derivative is depicted in
  Fig.~\ref{fig:dwave_a} and clearly shows the position of the singularities.
  In Fig.~\ref{fig:dwave_b} the error between $\mathrm d f(\vec x)$ and the
  differential of harmonic approximation
  $\d (P_{\R P^{2}} \circ S_{64})( \E \circ f)(\vec x)$ is plotted as a
  function of the propagation direction $\vec x\in \sphere^{2}$. Since the
  differential $\mathrm d f(\vec x) $ is a matrix in $\R^{9\times 2}$, we
  consider here the spectral norm of the error matrix. In order to illustrate
  our theoretical result of Theorem~\ref{satz:wichtigableitung} we plotted our
  theoretical upper bound on that approximation error of the derivative in
  Fig.~\ref{fig:dwave_c}. It should be noted that for the differential we
  needed to increased the polynomial degree to $L=64$ with $21000$ sample
  points in order to obtain a reasonable approximation at some distance to the
  singularities.

\begin{figure}[tb]

  \begin{subfigure}[T]{0.29\textwidth}
    \includegraphics[height=\textwidth]{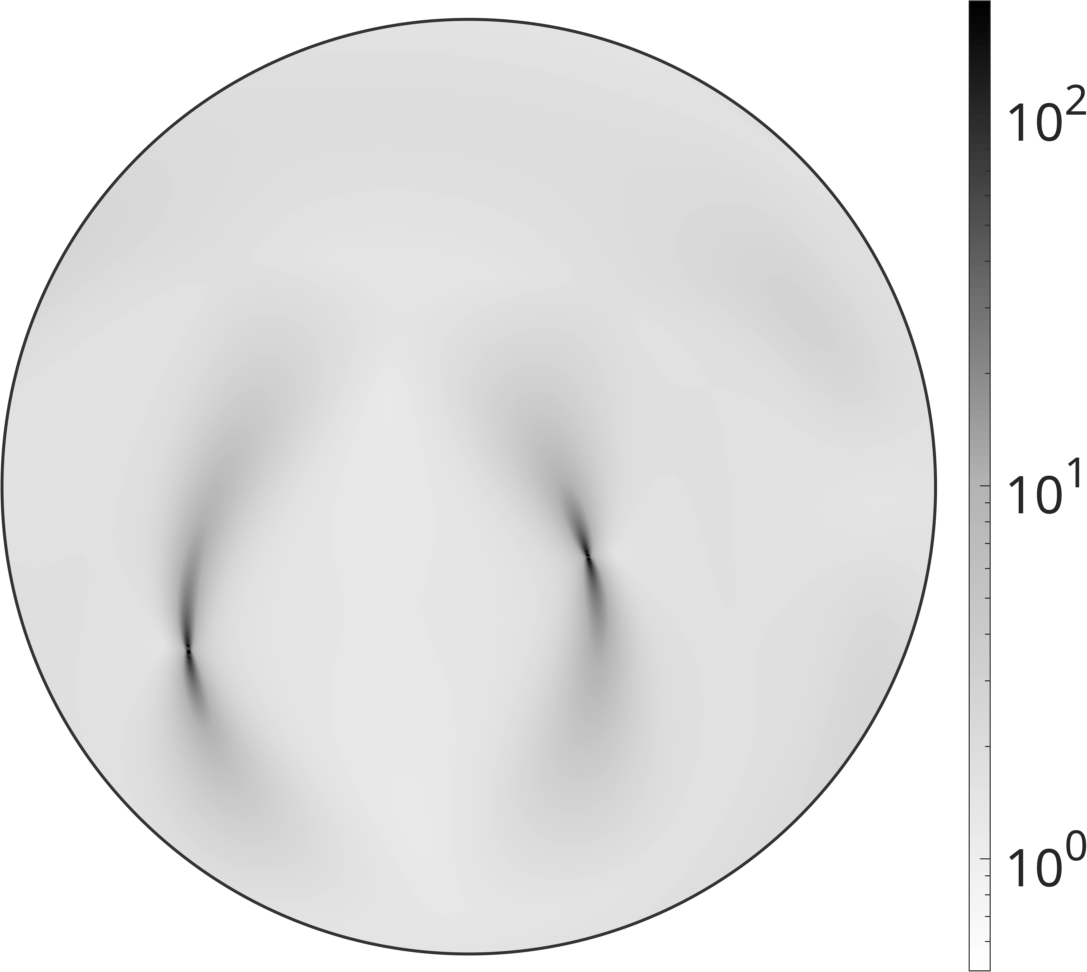}
    \subcaption{$\norm{d f(x)}_2$}
    \label{fig:dwave_a}
  \end{subfigure}
  \qquad
  \begin{subfigure}[T]{0.29\textwidth}
    \includegraphics[height=\textwidth]{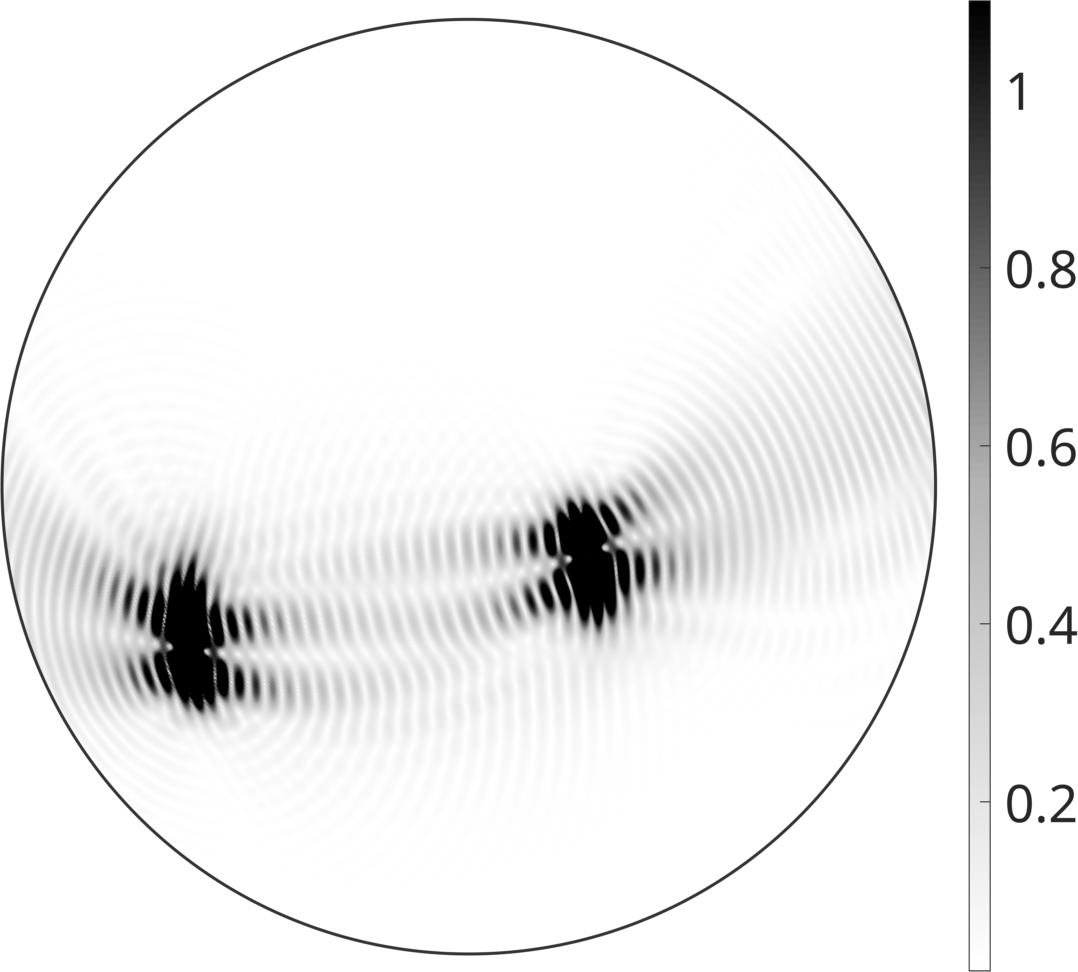}
    \subcaption{$\norm{d f(x) - d P_{\R P^2}\circ \tilde f}_2$}
    \label{fig:dwave_b}
  \end{subfigure}
  \qquad
  \begin{subfigure}[T]{0.29\textwidth}
    \includegraphics[height=\textwidth]{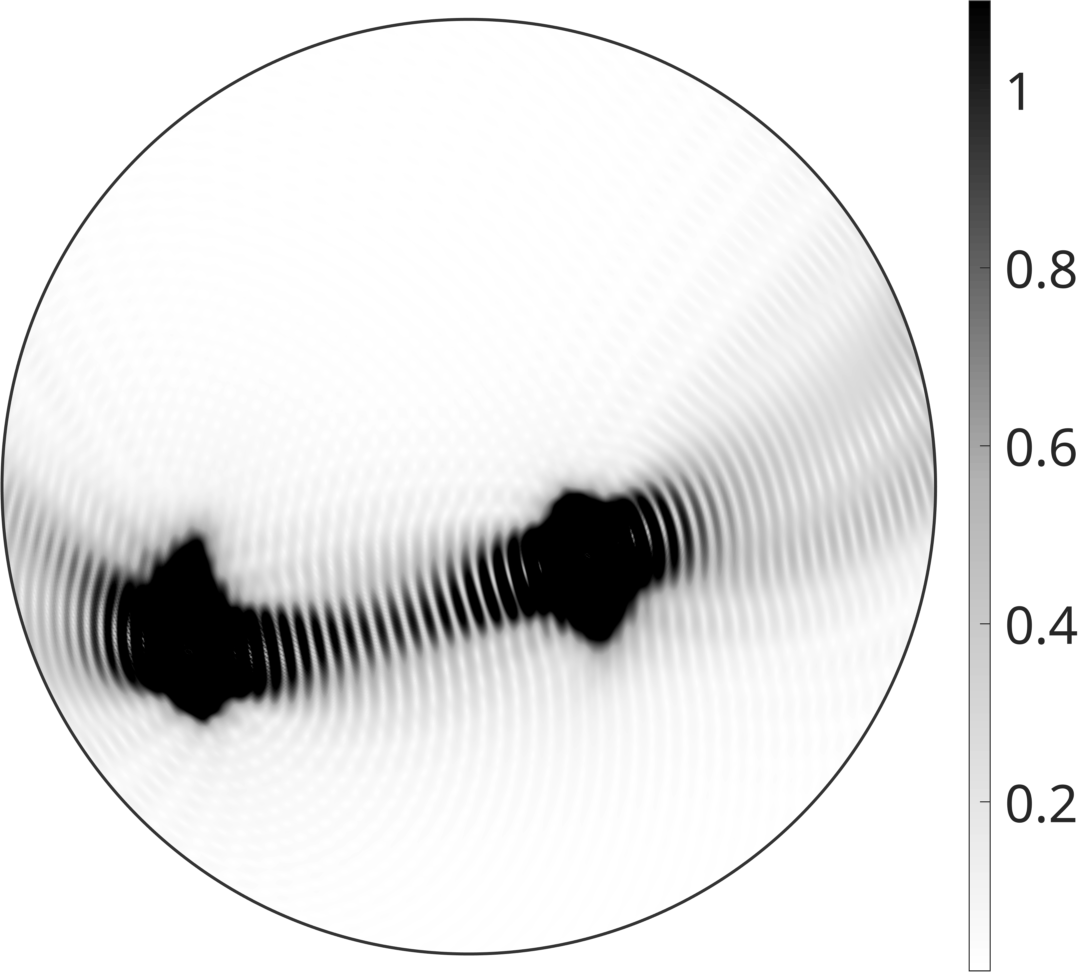}
    \subcaption{Bound from Thm.~\ref{satz:wichtigableitung}.}
    \label{fig:dwave_c}
  \end{subfigure}

  \caption{The left image (a) shows the point-wise norm of the
      differential $\d f$. The middle image (b) depicts the approximation
      error between the differential of the original function $f$ and the
      differential of its harmonic approximation
      $P_{\R P^{2}} \circ \tilde f = P_{\R P^{2}} \circ S_{64}(\mathcal E
      \circ f)$. The right images (c) gives the upper bound for (b) from
      Theorem~\ref{satz:wichtigableitung}.}
  \label{fig:dwave}
\end{figure}

\subsection{Electron Back Scatter Diffraction}
\label{sec:so3subset-r3times-3}

The subject of crystallographic texture analysis is the microstructure of
polycrystalline materials. Locally the microstructure is described by the
orientation of the atom lattice with respect to some specimen fixed reference
frame. More specifically, one describes the local orientation of the atom
lattice by a coset $[\vec R]_{\mathcal S} \in \SO / \mathcal S$ of the
rotation group $\SO$ modulo the finite subgroup $\mathcal S \subset \SO$,
called point group.  The point group of a crystal consists of all symmetries
of its atom lattice and is either one of the cyclic groups $C_{1}$, $C_{2}$,
$C_{3}$, $C_{4}$, $C_{6}$, the dihedral groups $D_{2}$, $D_{3}$, $D_{4}$,
$D_{6}$, the tetragonal group $T$ or the octahedral group $O$. Assuming a
monophase material, i.e., a material consisting only of a single type of
crystals, the variation of the local orientation of the atom lattice at the
surface $\Omega \subset \R^{2}$ of the specimen is modeled by the map
\begin{equation*}
  f \colon \Omega \to \SO/\mathcal S.
\end{equation*}

The gradient of the function $f$, also called lattice curvature tensor
$\vec \kappa(\vec x)$, is closely related to elastic and plastic deformations
the specimen has been exposed to. More specifically, it is related via the Nye
equation to the dislocation density tensor $\vec \alpha(\vec x)$,
\cite{Pa08,KoZaRa15} that describes how many lattice dislocations are
geometrically necessary in order to preserve the compatibility of the lattice
for a given deformation. Hence, estimating $f$ and its derivatives from
experimental data is a central problem in material science.

Electron back scatter diffraction (EBSD) is an experimental technique
\cite{ASWK93,KuWrAdDi93} for determining the local lattice orientations
$f(\vec x_{\ell}) \in \SO/\mathcal S$ at discrete sampling points
$\vec x_{i,j} \in \Omega$. An example of such EBSD data is the
$\SO/\mathcal S$ - valued image displayed in Fig.~\ref{fig:EBSDRAW}. It
describes the variation of lattice orientation at the surface of an Aluminum
alloy of size $200\mu m \times 150 \mu m$ at an resolution of $0.4\mu m$. The
symmetry group in this case is the octahedral group $O$.

\begin{figure}[t]
  \begin{subfigure}{0.42\linewidth}
    \includegraphics[height=3.9cm]{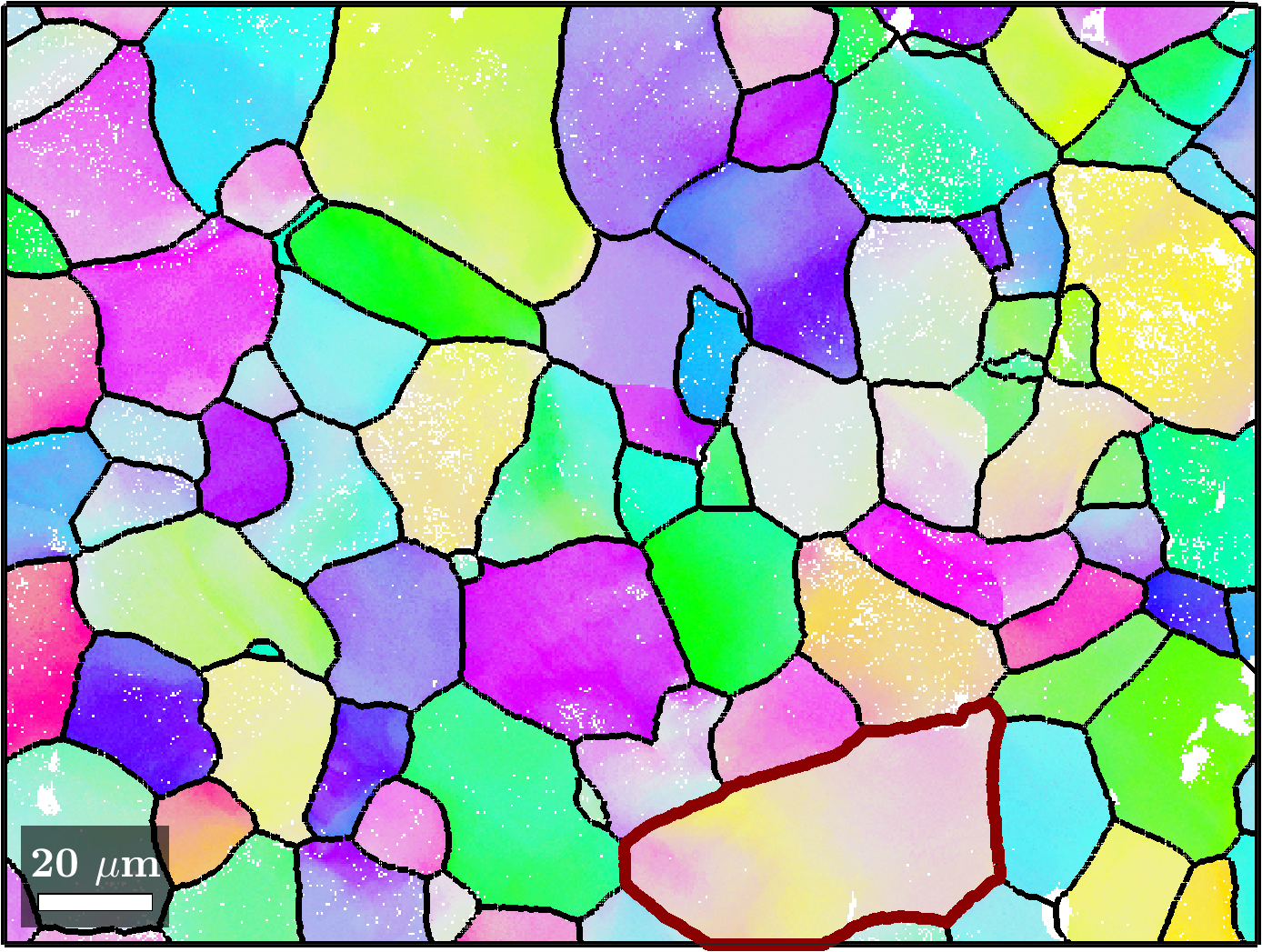}
    \subcaption{full map with a global color key}
    \label{fig:EBSDRAWa}
  \end{subfigure}
  \quad
  \begin{subfigure}{0.5\linewidth}
    \includegraphics[height=3.9cm]{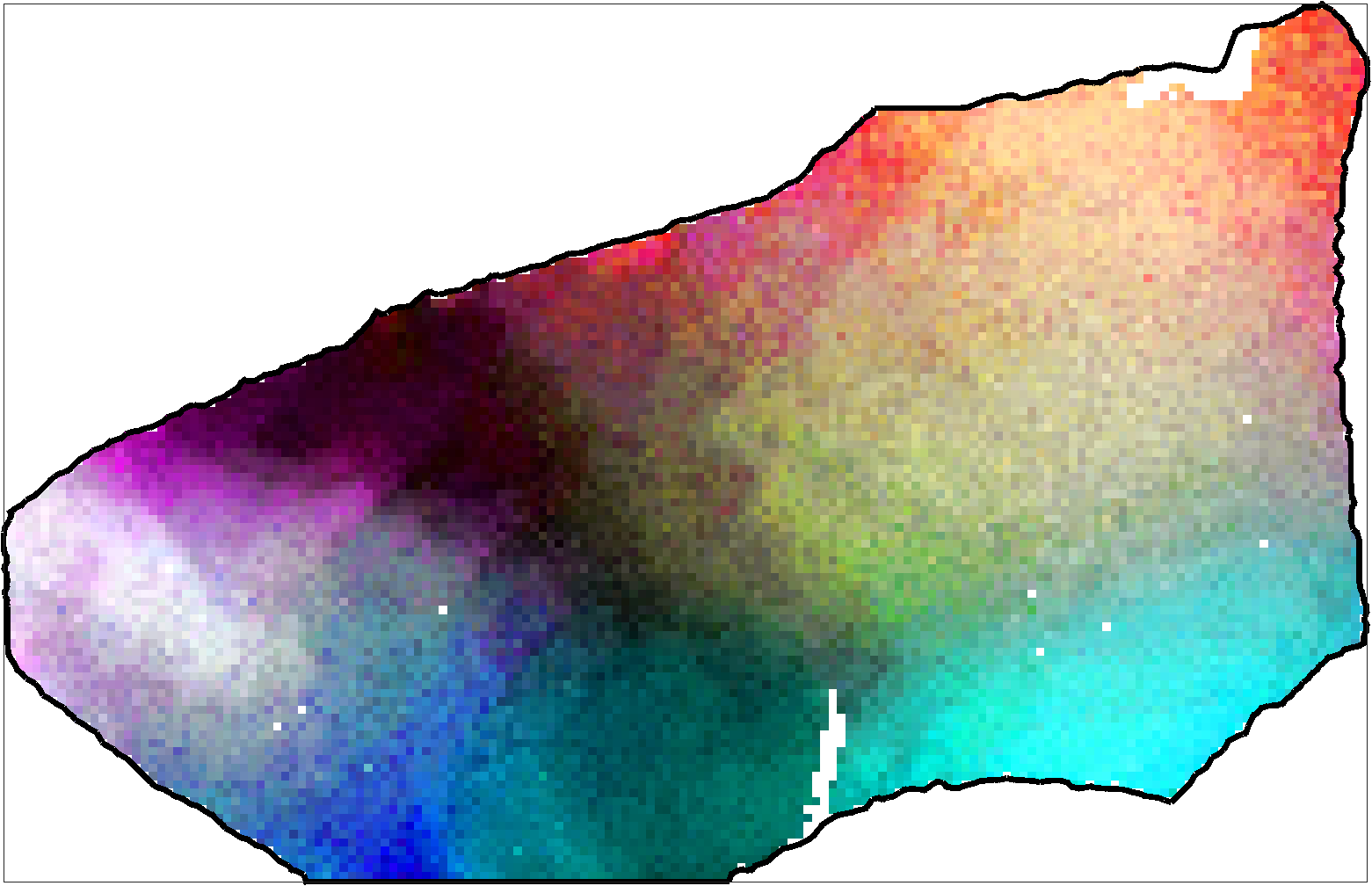}
    \subcaption{single grain with a local color key}
    \label{fig:EBSDRAWb}
  \end{subfigure}
  \centering
  \caption{The raw EBSD data. Each of the $410$ x $547$ pixels
    corresponds to a single orientation measurement at the surface of the
    specimen. The color is computed by the procedure described in
    \cite{NoHi17}. The $5\%$ white pixels in Fig. (a) correspond to
      corrupted measurements with no data. The data has been segmented into
      $92$ grains as outlined by the black boundaries.}
  \label{fig:EBSDRAW}
\end{figure}

The data is displayed with respect to two different color keys. In
Fig.~\ref{fig:EBSDRAWa} the colors are assigned globally to the cosets
$f(\vec x) \in \SO/O$ as described in \cite{NoHi17}. Regions of similar
lattice orientation form so-called grains as outlined by the black
boundaries. In Fig.~\ref{fig:EBSDRAWb} only the single grain outlined
  by the red boundary in Fig.~\ref{fig:EBSDRAWa} is displayed. For this grain
  we computed an average lattice orientation $[\vec M] \in \SO/O$ and selected
  for each coset $f(\vec x) \in \SO/O$ the rotation
  $\vec R(x) \in [\vec M^{-1} f(\vec x)]$ with the smallest rotational
  angle. Next we associated a color to $\vec R(x)$ according to a spherical
  color representation where the rotational angle of $\vec R(x)$ determines
  the saturation and the rotational axis hue and value. More details on this
  orientation coloring can be found in \cite{Thomsen17}.

Estimating the derivative from such a noisy map of lattice orientations is
usually not a good idea as we will see later.
Reducing the noise by means of local approximation methods has been discussed
in \cite{HiSiSc19,Seret19}. In order to demonstrate our embedding based
approximation approach we make use of the locally isometric embedding
$\mathcal E_{O} \colon \SO/O \to \R^{9}$ described in \cite{HiLi20} and
proceed as follows

\begin{enumerate}
\item Compute an $\R^{9}$-valued image $\vec u_{i,j} = \mathcal E_{O}(f(\vec
  x_{i,j}))$.
\item Approximate the $\R^{9}$-valued image using a cosine series
  $\tilde u \colon \Omega \to \R^{9}$ computed by robust, penalized least squares
  \cite{Gar10}.
\item Evaluate the function $\tilde{u}$ at the grid points $\vec x_{i,j}$ to obtain a
  noise reduced $\R^{9}$-valued image $\tilde{\vec u}_{i,j}$.
\item Compute the projection of $\vec{\tilde u}_{i,j}$ onto the
  embedding $\mathcal E_{O}(\SO/O)$ of the quotient and apply the inverse map
  $\mathcal E_{O}^{-1}$ to end up with a noise reduced $\SO/O$-valued image
  $\tilde f(\vec x_{i,j})$.
\end{enumerate}
The resulting image is depicted in Figure~\ref{fig:EBSDSmooth}. We
  observe that all no data pixels have been inpainted and that the magnified
  part~\ref{fig:EBSDSmoothb} is much less noisy in comparison to
  Fig.~\ref{fig:EBSDRAWb}.

\begin{figure}[t]
  \centering

 \begin{subfigure}{0.42\linewidth}
    \includegraphics[height=3.9cm]{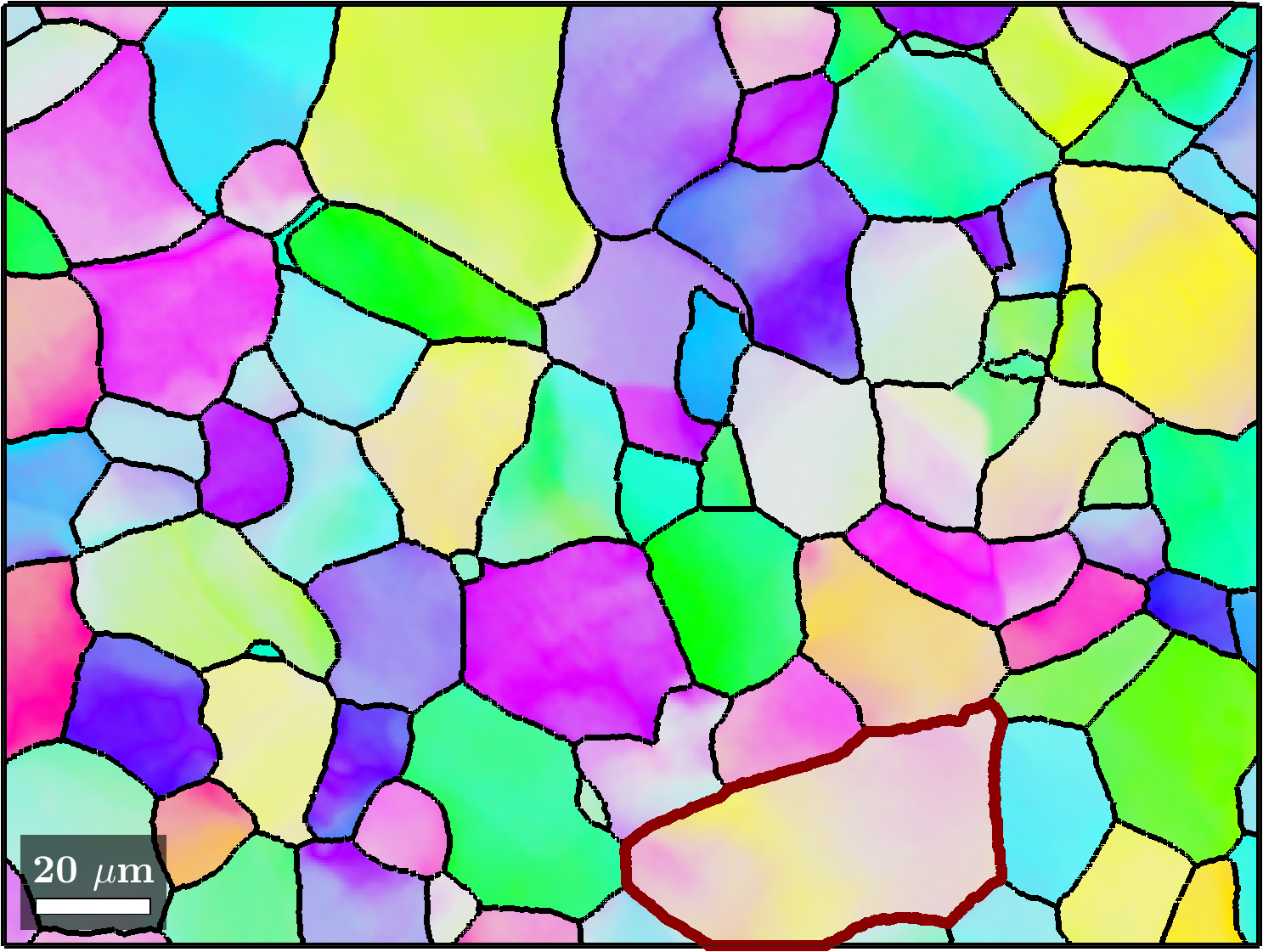}
    \subcaption{full map of the approximated data}
    \label{fig:EBSDSmootha}
  \end{subfigure}
  \quad
  \begin{subfigure}{0.5\linewidth}
    \includegraphics[height=3.9cm]{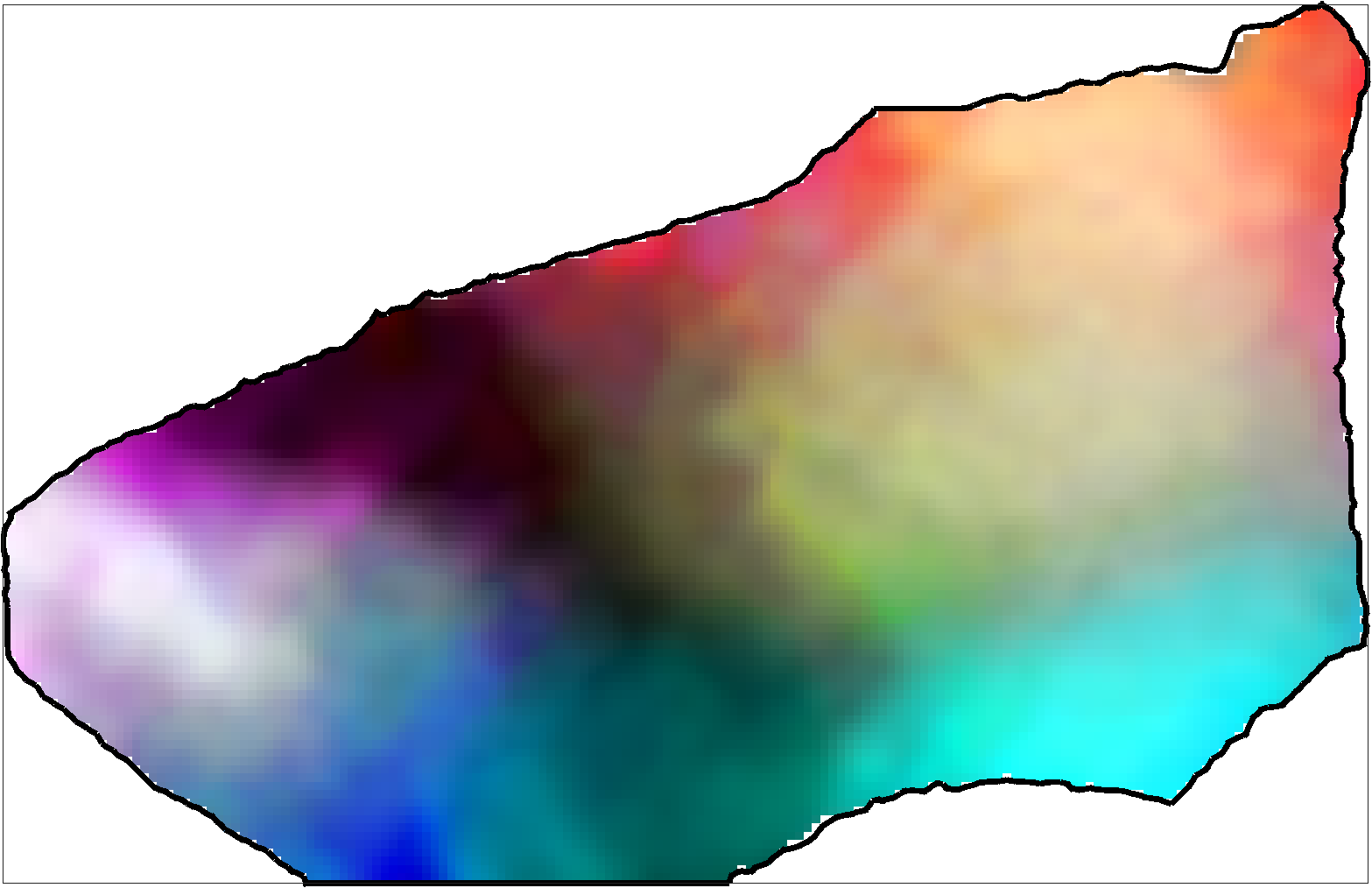}
    \subcaption{single grain with a local color key}
    \label{fig:EBSDSmoothb}
  \end{subfigure}
  \centering

  \caption{EBSD map from Fig.~\ref{fig:EBSDRAW} after the embedding based
    approximation approach.}
  \label{fig:EBSDSmooth}
\end{figure}

For the computation of the lattice curvature tensor $\vec \kappa$ we use
the skew symmetric matrices
\begin{equation*}
  \vec s^{(1)} =
  \begin{pmatrix}
    0&0&0\\
    0&0&-1\\
    0&1&0
  \end{pmatrix},\quad
  \vec s^{(2)} =
  \begin{pmatrix}
    0&0&-1\\0&0&0\\1&0&0
  \end{pmatrix},\quad
  \vec s^{(3)} =
  \begin{pmatrix}
    0&-1&0\\1&0&0\\0&0&0
  \end{pmatrix},
\end{equation*}
to fix the basis $\vec R \vec s^{(1)},\vec R \vec s^{(2)},\vec R \vec s^{(3)}$
in the tangential space $T_{\vec R} \SO/ O$ at some rotation
$\vec R \in \SO$.  With respect to this basis the differential
$D\mathcal E(\vec R) \colon T_{\vec R} \SO/O \to \R^{9}$ of the
embedding $\mathcal E \colon \SO/O \to \R^{9}$ can be represented as a full
rank $3 \times 9$ matrix. Furthermore, we obtain for the differential
$D \tilde u \colon \R^{2} \to \R^{9}$ of the embedded image
$\tilde u = \mathcal E \circ \tilde f \colon \Omega \to \R^{9}$ at some point
$\vec x \in \Omega$ the matrix product
$ D \tilde u(\vec x) = D \mathcal E(\tilde f(\vec x)) D f(\vec x)$. Hence, the
lattice curvature tensor $\tilde{\vec \kappa}$ of the noise reduced EBSD map
evaluates to
\begin{equation*}
  \tilde{\vec \kappa}(\vec x)
  = D \tilde f(\vec x)
  =  \left(D \mathcal E(\tilde f(\vec x)) D \mathcal E(\tilde f(\vec x))^{\top}\right)^{-1}
  D \mathcal E(\tilde uf(\vec x))^{\top} D \tilde u(\vec x).
\end{equation*}

The map of the first component $\tilde \kappa_{1,1}$ of the lattice curvature tensor
obtained from the approximating function $\tilde u$ is depicted in
Fig.~\ref{fig:EBSDKappaB}. For comparison we plotted in
Fig.~\ref{fig:EBSDKappaA} a finite difference approximation
\begin{equation*}
 \kappa(\vec x_{i,j})_{1,1} = \frac{\log_{f(\vec x_{i,j})}(f(\vec x_{i+1,j}) )}{[x_{i+1,j}-x_{i,j}]_{1}},
\end{equation*}
derived from the discrete data $f(\vec x_{i,j})$. Here, we denoted by
$\log_{\vec R} \colon \SO/O \to T_{\vec R} \SO/O$ the logarithmic
mapping with respect to the base point $\vec R \in \SO/O$. As expected, we observe that
the lattice curvature tensor $\tilde \kappa$ derived from the approximated map
$\tilde u$ is much less noisy.

\begin{figure}[htbp]
  \centering
  \begin{subfigure}{0.45\linewidth}
    \includegraphics[width=\textwidth]{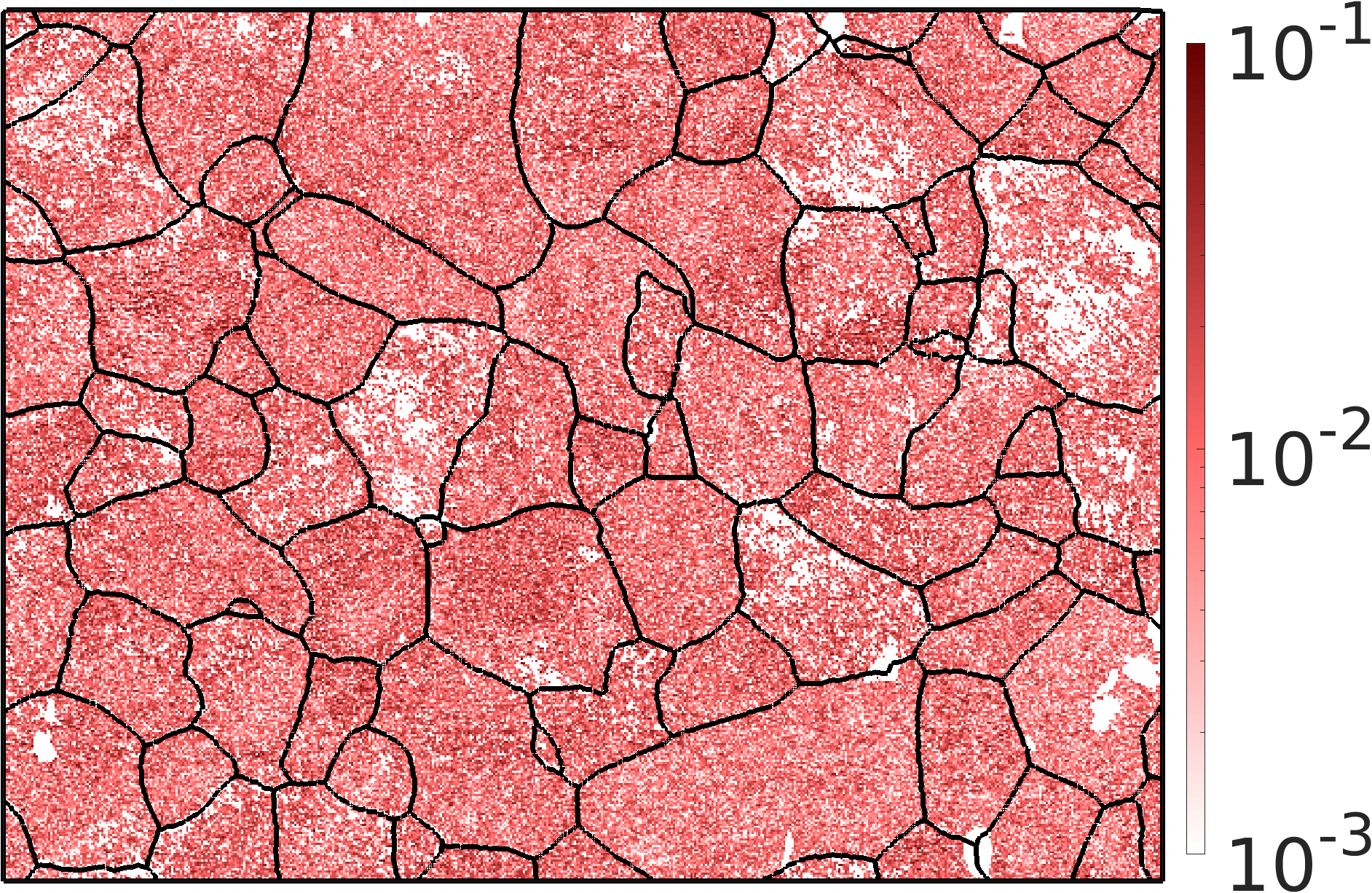}
    \caption{$\kappa_{1,1}$ of noisy map.}
    \label{fig:EBSDKappaA}
  \end{subfigure}
  \quad
  \begin{subfigure}{0.45\linewidth}
    \includegraphics[width=\textwidth]{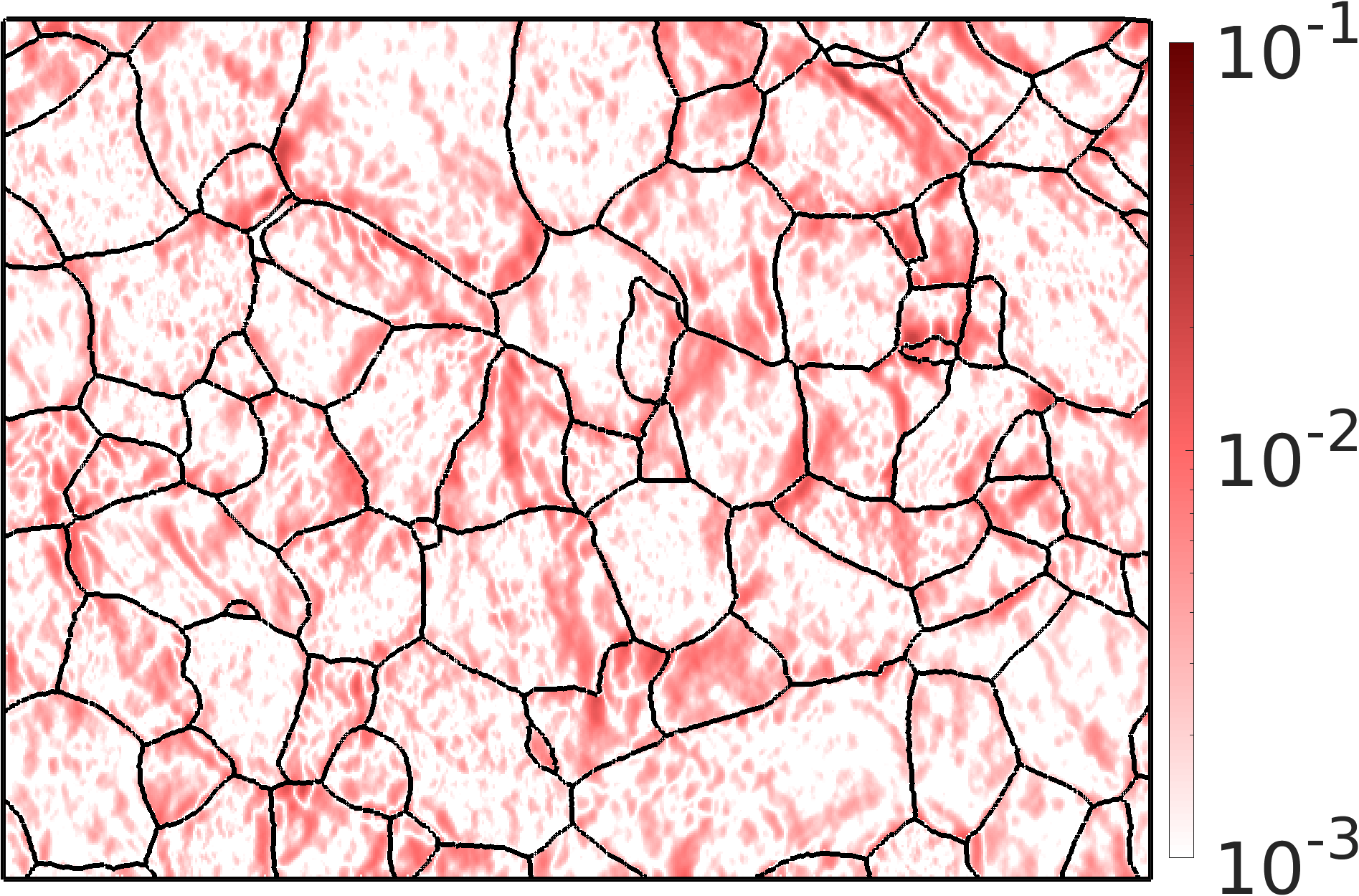}
    \caption{$\kappa_{1,1}$ of noise reduced map.}
    \label{fig:EBSDKappaB}
  \end{subfigure}

  \caption{First coefficient $\tilde\kappa_{1,1}(\vec x)$ of the lattice curvature
    tensor of the $SO(3)/O$-valued map depicted in Fig.~\ref{fig:EBSDRAW}
    (left) and Fig.~\ref{fig:EBSDSmooth} (right).}
  \label{fig:EBSDKappa}
\end{figure}
\newpage
\section*{Conclusion and further directions}
We proposed a method for approximating a manifold-valued function using an
embedding approach and a generic approximation operator $I_{\R^{d}}$ into the
Euclidean space. Our main result are the Theorems~\ref{satz:wichtig}
and~\ref{satz:wichtigableitung} which give upper bounds on the approximation
error for the function values as well as for the derivatives. The central
requirement of the Theorems is that generic approximation $I_{\R^d} f(\vec x)$
has distance less than $\tau$ to the manifold $\M$. For the approximation error of
the function values it is only important that $I_{\R^d} f(\vec x)$ is within
the reach of the manifold $\mathcal M$ and has no impact on the convergence
rate or the constants. However, for the approximation error of the derivatives
the constant of the upper bound becomes arbitrary large for $I_{\R^d} f(\vec
x)$ close to the reach. This stresses the importance of finding embeddings
with a large reach.

\medskip
The basis of our approximation approach is to find a suitable embedding of the
manifold $\M$ into $\R^d$.  For an arbitrary manifold this can be a difficult
challenge. However, for many important manifolds low dimensional embeddings
are well known. A further challenge is the numerical realization of the
projection operator $P_\M$ on the manifold needed for our embedding based
approximation method. This leads to a problem of manifold optimization.

\medskip

So far we did not deal with noisy data. The main challenge here is to guaranty
that $I_{\R^d} f(\vec x)$ is sufficiently close to the manifold even for noisy
data. Up to this point it is not clear how strongly noise that keeps the data on
the manifold can increase the distance of $I_{\R^d} f(\vec x)$ to
the manifold.

\bigskip

\section*{Acknowledgments}
The authors would like to thank Prof. Dr. Philipp Reiter for the nice hint for completing Theorem \ref{thm:C2Teil1}. 
Furthermore, we thank the anonymous reviewers for providing helpful comments and suggestions to improve this article.
The second author acknowledges funding by Deutsche Forschungsgemeinschaft (DFG, German Research Foundation) - Project-ID 416228727 - SFB 1410.

\appendix{}
\section{Bound for the commutator}
To bound the term $\norm{\d P_\M(\vec m)-\d P_\M(\vec z)}_2$ in section \ref{sec:change-proj-oper} we need a lemma, which is based on linear algebra.

\begin{lemma}
  \label{lemma:commutator}
  Let $\vec T$ be a projection matrix and $\vec R$ be a rotation matrix. Then
  there holds for the commutator
  \begin{equation*}
    \norm{\vec T\vec R-\vec R\vec T}_2\leq \norm{\vec I-\vec R}_2,
  \end{equation*}
  where again $\norm{\cdot}_2$ denotes the spectral norm.
\end{lemma}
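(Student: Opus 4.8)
The plan is to avoid the lossy factor-$2$ that a naive triangle inequality on $\vec T \vec R - \vec R \vec T = (\vec I - \vec R)\vec T - \vec T(\vec I - \vec R)$ would produce, and instead to exploit the orthogonal block structure induced by the projection $\vec T$. Write $\vec T^{\perp} = \vec I - \vec T$ for the complementary orthogonal projection. Using idempotency $\vec T^{2} = \vec T$ together with $\vec T \vec T^{\perp} = \vec T^{\perp} \vec T = \vec 0$, I would first insert $\vec I = \vec T + \vec T^{\perp}$ on both sides of $\vec R$ and cancel the common diagonal block $\vec T \vec R \vec T$ to obtain the identity $\vec T \vec R - \vec R \vec T = \vec T \vec R \vec T^{\perp} - \vec T^{\perp} \vec R \vec T$.

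Second, I would observe that the two summands act on orthogonal parts of the domain and land in orthogonal parts of the range: the operator $\vec T \vec R \vec T^{\perp}$ vanishes on $\operatorname{range}(\vec T)$ and maps into $\operatorname{range}(\vec T)$, whereas $\vec T^{\perp}\vec R \vec T$ vanishes on $\operatorname{range}(\vec T^{\perp})$ and maps into $\operatorname{range}(\vec T^{\perp})$. Hence, decomposing an arbitrary unit vector as $\vec x = \vec T \vec x + \vec T^{\perp}\vec x$, the image $(\vec T \vec R - \vec R \vec T)\vec x = \vec T \vec R \vec T^{\perp}\vec x - \vec T^{\perp}\vec R \vec T \vec x$ is an orthogonal sum, so by the Pythagorean theorem $\norm{\vec T \vec R - \vec R \vec T}_{2} = \max\{\norm{\vec T \vec R \vec T^{\perp}}_{2}, \norm{\vec T^{\perp}\vec R \vec T}_{2}\}$.

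Third, I would bound each block separately. For $\vec x \in \operatorname{range}(\vec T)$ we have $\vec T^{\perp}\vec x = \vec 0$, hence $\vec T^{\perp}\vec R \vec T \vec x = \vec T^{\perp}(\vec R \vec x - \vec x)$, and since $\vec T^{\perp}$ is a projection this gives $\norm{\vec T^{\perp}\vec R \vec T \vec x}_{2} \le \norm{(\vec R - \vec I)\vec x}_{2} \le \norm{\vec I - \vec R}_{2}\,\norm{\vec x}_{2}$; the symmetric argument with $\vec T$ and $\vec T^{\perp}$ interchanged bounds $\norm{\vec T \vec R \vec T^{\perp}}_{2}$ in exactly the same way. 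Combining this with the previous step yields $\norm{\vec T \vec R - \vec R \vec T}_{2} \le \norm{\vec I - \vec R}_{2}$.

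I expect the only genuine obstacle to be the second step, namely recognizing that the two off-diagonal blocks are supported on, and map into, mutually orthogonal subspaces, so that the norm of their difference is the \emph{maximum} rather than the sum of their norms; this is precisely where the lossy factor is avoided. The remaining manipulations are routine identities for orthogonal projections. I would also note in passing that the argument never uses $\vec R \vec R^{\top} = \vec I$, so that the statement in fact holds for an arbitrary matrix $\vec R$, but I would keep the rotation hypothesis as stated.
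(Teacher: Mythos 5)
Your proof is correct, but it takes a genuinely different route from the paper's. The paper fixes adapted coordinates in which $\vec T=\left(\begin{smallmatrix}\vec I&\vec 0\\ \vec 0&\vec 0\end{smallmatrix}\right)$, computes the Gram matrix $(\vec T\vec R-\vec R\vec T)^\top(\vec T\vec R-\vec R\vec T)$ explicitly, and uses the orthogonality of $\vec R$ twice: once to identify the diagonal blocks of this Gram matrix as $\vec R_{21}^\top\vec R_{21}$ and $\vec R_{12}^\top\vec R_{12}$, and once to show that the diagonal blocks of $(\vec I-\vec R)^\top(\vec I-\vec R)$ dominate these. Your argument reaches the same pivotal observation --- that the commutator is off-diagonal with respect to the splitting $\operatorname{range}(\vec T)\oplus\operatorname{range}(\vec T^{\perp})$, so its norm is the maximum rather than the sum of the two off-diagonal block norms --- but coordinate-free, and then bounds each block by the elementary identity $\vec T^{\perp}\vec R\vec T\vec x=\vec T^{\perp}(\vec R-\vec I)\vec x$ for $\vec x\in\operatorname{range}(\vec T)$ together with the fact that orthogonal projections are contractions. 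This buys you two things: the argument is shorter and avoids the Gram-matrix computation, and, as you correctly note, it never uses $\vec R\vec R^\top=\vec I$, so the lemma holds for an arbitrary matrix in place of $\vec R$ (indeed with $\vec I$ replaced by any scalar multiple of the identity). One point worth making explicit in either proof: $\vec T$ must be an \emph{orthogonal} projection (symmetric idempotent), since both your Pythagorean step and the contraction property of $\vec T^{\perp}$ fail for oblique projections; this is also implicit in the paper's choice of a norm-preserving change of basis, and it is satisfied in the application, where $\vec T=P_{T_{\vec m}\M}$.
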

\begin{proof}
  Since the spectral norm doesn't change under change of basis, we choose a
  matrix representation where the projection matrix has the form
  $$\vec T=\begin{pmatrix}\vec I&\vec 0\\ \vec 0&\vec 0\end{pmatrix},$$
  where $\vec I$ is the identity matrix of dimension $D$.  Then we also write the
  rotation matrix $\vec R$ in these blocks:
  \begin{equation*}
    \vec R=\begin{pmatrix}\vec R_{11}&\vec R_{12}\\ \vec R_{21}&\vec R_{22}\end{pmatrix}.
  \end{equation*}
  Simple matrix multiplication yields because of the orthogonality of $\vec R$
  \begin{align*}
    (\vec T\vec R-\vec R\vec T)^\top(\vec T\vec R-\vec R\vec T)
    &=\vec R^\top\vec T\vec R-\vec R^\top\vec T\vec R\vec T-\vec T\vec R^\top\vec T\vec R+\vec T\\
    &=\begin{pmatrix}\vec I-\vec R_{11}^\top\vec R_{11}&\vec 0\\ \vec 0&\vec
      R_{12}^\top\vec R_{12}
    \end{pmatrix}\\
    &=\begin{pmatrix}\vec R_{21}^\top\vec R_{21}&\vec 0\\ \vec 0&\vec
      R_{12}^\top\vec R_{12}\end{pmatrix}
  \end{align*}
  On the other hand there holds, again with help of the orthogonality of $\vec R$,
  \begin{align*}
    (\vec I-\vec R)^\top(\vec I-\vec R)
    &= 2\vec I-\vec R-\vec R^\top
      = \begin{pmatrix}2\vec I-\vec R_{11}-\vec R_{11}^\top&-\vec R_{12}-\vec R_{21}^\top\\ -\vec R_{21}-\vec R_{12}^\top&2\vec I-\vec R_{22}-\vec R_{22}^\top\end{pmatrix}\\
    &= \begin{pmatrix}(\vec I-\vec R_{11}^\top)(\vec I-\vec R_{11})+\vec I-\vec R_{11}^\top\vec R_{11}&-\vec R_{12}-\vec R_{21}^\top\\ -\vec R_{21}-\vec R_{12}^\top&(\vec I-\vec R_{22}^\top)(\vec I-\vec R_{22})+\vec I-\vec R_{22}^\top\vec R_{22}\end{pmatrix}\\
    &= \begin{pmatrix}(\vec I-\vec R_{11}^\top)(\vec I-\vec R_{11})+\vec R_{21}^\top\vec R_{21}&-\vec R_{12}-\vec R_{21}^\top\\ -\vec R_{21}-\vec R_{12}^\top&(\vec I-\vec R_{22}^\top)(\vec I-\vec R_{22})+\vec R_{12}^\top\vec R_{12}\end{pmatrix}.
  \end{align*}
  The spectral norm of a matrix $\vec A$, i.e., the largest absolute
  value of the eigenvalues can be written as
  \begin{equation*}
    \norm{\vec A}_2^2=\max_{\norm{\vec x}_2=1}\norm{\vec x^\top \vec A^\top \vec A\vec x}_2.
  \end{equation*}
  For that reason we choose the vector $\vec x$ as the eigenvector of the
  matrix $(\vec T\vec R-\vec R\vec T)^\top(\vec T\vec R-\vec R\vec T)$.  Since
  the eigenvalues and eigenvectors of a block-diagonal matrix are the union of
  the eigenvalues and eigenvectors, i.e., there holds
  $\vec x = \begin{pmatrix}\vec x_1 &\vec 0\end{pmatrix}^\top$ or
  $\vec x = \begin{pmatrix}\vec 0 &\vec x_2\end{pmatrix}^\top$.  We assume the
  first case, the other one is analog.  Hence, there holds
  \begin{equation*}
    \norm{\vec T\vec R-\vec R\vec T}_2^2
    =\vec x^\top\begin{pmatrix}\vec R_{21}^\top\vec R_{21}&\vec 0\\ \vec
      0&\vec R_{12}^\top\vec R_{12}\end{pmatrix} \vec x
    = \vec x_1^\top\vec R_{21}^\top\vec R_{21}\vec x_1.
  \end{equation*}
  If we look at the norm of the matrix $\vec I-\vec R$, we
  get
  \begin{align*}
    \norm{\vec I-\vec R}_2^2&\geq \vec x^\top \,(\vec I-\vec R)^\top(\vec I-\vec R)\,\vec x=\vec x_1^\top\,\left((\vec I-\vec R_{11}^\top)(\vec I-\vec R_{11})+\vec R_{21}^\top\vec R_{21}\right)\,\vec x_1\\
&\geq \vec x_1^\top\vec R_{21}^\top\vec R_{21}\vec x_1,
  \end{align*}
  since the eigenvalues of $(\vec I-\vec R_{11}^\top)(\vec I-\vec R_{11})$ are
  positive. Putting this together and taking the square root, yields the
  assertion.
\end{proof}

\bibliographystyle{abbrv}
\bibliography{../../references/references}

\begin{thebibliography}{10}

\bibitem{aamari19}
E.~Aamari, J.~Kim, F.~Chazal, B.~Michel, A.~Rinaldo, and L.~Wasserman.
\newblock Estimating the reach of a manifold.
\newblock {\em Electron. J. Statist.}, 13(1):1359--1399, 2019.

\bibitem{ASWK93}
B.~L. Adams, S.~I. Wright, and K.~Kunze.
\newblock Orientation imaging: {T}he emergence of a new microscopy.
\newblock {\em Metall. Mater. Trans. A}, 24:819--831, 1993.

\bibitem{BoLiWi19}
J.-D. Boissonnat, A.~Lieutier, and M.~Wintraecken.
\newblock The reach, metric distortion, geodesic convexity and the variation of
  tangent spaces.
\newblock {\em J. Appl. Comput. Topol.}, 3:1--30, 07 2019.

\bibitem{Co16}
A.~Constantin.
\newblock {\em Fourier Analysis: Volume 1, Theory}, volume~85.
\newblock Cambridge University Press, 2016.

\bibitem{MaHiSc11}
D.Mainprice, R.~Hielscher, and H.~Schaeben.
\newblock Calculating anisotropic physical properties from texture data using
  the {MTEX} open source package.
\newblock {\em Geological Society, London, Special Publications}, 360:175--192,
  2011.

\bibitem{Gar10}
D.~Garcia.
\newblock Robust smoothing of gridded data in one and higher dimensions with
  missing values.
\newblock {\em Comput. Statist. Data Anal.}, 54(4):1167--1178, 2010.

\bibitem{Ga18}
E.~S. Gawlik and M.~Leok.
\newblock Embedding-based interpolation on the special orthogonal group.
\newblock {\em SIAM J. Sci. Comput.}, 40(2):A721--A746, 2018.

\bibitem{Gr_pointsquad}
M.~Gr{\"a}f.
\newblock Quadrature rules on manifolds.
\newblock \newline
  {\verb+http://homepage.univie.ac.at/manuel.graef/quadrature.php+}, 2013.

\bibitem{Gr09}
P.~{Grohs}.
\newblock Smoothness of interpolatory multivariate subdivision in lie groups.
\newblock {\em IMA J. Numer. Anal.}, 29(3):760--772, 2009.

\bibitem{Gro10}
P.~Grohs.
\newblock Approximation order from stability for nonlinear subdivision schemes.
\newblock {\em J. Approx. Theory}, 162(5):1085--1094, 2010.

\bibitem{Grohs2013ProjectionBasedQI}
P.~Grohs and M.~Sprecher.
\newblock Projection-based quasiinterpolation in manifolds.
\newblock 2013.
\newblock Technical Report 2013-23, Seminar for Applied Mathematics, ETH
  Zürich, Switzerland.

\bibitem{GrSpYu17}
P.~Grohs, M.~Sprecher, and T.~Yu.
\newblock Scattered manifold-valued data approximation.
\newblock {\em Numer. Math.}, 135:987--1010, 2017.

\bibitem{HiLi20}
R.~Hielscher and L.~Lippert.
\newblock Locally isometric embeddings of quotients of the rotation group
  modulo finite symmetries.
\newblock {\em J. Multivar. Anal.}, 185:104764, 2021.

\bibitem{HiSiSc19}
R.~Hielscher, C.~Silbermann, E.~Schmiedl, and J.~Ihlemann.
\newblock Denoising of crystal orientation maps.
\newblock {\em J. Appl. Cryst.}, 52, 2019.

\bibitem{KoZaRa15}
P.~Konijnenberg, S.~Zaefferer, and D.~Raabe.
\newblock Assessment of geometrically necessary dislocation levels derived by
  3d ebsd.
\newblock {\em Acta Materialia}, 99:402 -- 414, 2015.

\bibitem{KuWrAdDi93}
K.~Kunze, S.~I. Wright, B.~L. Adams, and D.~J. Dingley.
\newblock Advances in automatic {EBSP} single orientation measurements.
\newblock {\em Textures and Microstructures}, 20:41--54, 1993.

\bibitem{Lee12}
J.~Lee.
\newblock {\em Introduction to Smooth Manifolds}.
\newblock Springer-Verlag New York, second edition, 2012.

\bibitem{mainprice:hal-00408321}
D.~Mainprice.
\newblock {Seismic Anisotropy of the Deep Earth from a Mineral and Rock Physics
  Perspective}.
\newblock In {\em {Treatese of Geophysics, vol. 2}}, pages 437--491.
  {Elsevier}, 2007.

\bibitem{Mic13}
V.~Michel.
\newblock {\em Lectures on Constructive Approximation: Fourier, Spline, and
  Wavelet Methods on the Real Line, the Sphere, and the Ball}.
\newblock Birkh{\"{a}}user, New York, 2013.

\bibitem{Mo02}
M.~Moakher.
\newblock Means and averaging in the group of rotations.
\newblock {\em SIAM J. Matrix Anal. Appl}, 24, 04 2002.

\bibitem{Nash54}
J.~Nash.
\newblock C1 isometric imbeddings.
\newblock {\em Ann. Math.}, 60(3):383--396, 1954.

\bibitem{NSW08}
P.~Niyogi, S.~Smale, and S.~Weinberger.
\newblock Finding the {H}omology of {S}ubmanifolds with {H}igh {C}onfidence
  from {R}andom {S}amples.
\newblock {\em Discrete Comput. Geom.}, 39:419--441, 03 2008.

\bibitem{NoHi17}
G.~Nolze and R.~Hielscher.
\newblock Orientations -- perfectly colored.
\newblock {\em J. Appl. Cryst.}, 49:1786--1802, 2016.

\bibitem{nye85}
J.~F. Nye.
\newblock {\em Physical Properties of Crystals: Their Representation by Tensors
  852 and Matrices}.
\newblock Oxford Univ. Press, England, 2nd ed. edition, 1985.

\bibitem{Pa08}
W.~Pantleon.
\newblock Resolving the geometrically necessary dislocation content by
  conventional electron backscattering diffraction.
\newblock {\em Scripta Materialia}, 58:994--997, 2008.

\bibitem{PlPoStTa18}
G.~Plonka, D.~Potts, G.~Steidl, and M.~Tasche.
\newblock {\em Numerical Fourier Analysis}.
\newblock Applied and Numerical Harmonic Analysis. Birkh\"auser, 2018.

\bibitem{SaSe08}
A.~Sarlette and R.~Sepulchre.
\newblock Consensus optimization on manifolds, 2008.

\bibitem{Seret19}
A.~Seret, C.~Moussa, M.~Bernacki, J.~Signorelli, and N.~Bozzolo.
\newblock {Estimation of geometrically necessary dislocation density from
  filtered EBSD data by a local linear adaptation of smoothing splines}.
\newblock {\em J. Appl. Crystallogr.}, 52(3):548--563, Jun 2019.

\bibitem{ShIt13}
N.~Sharon and U.~Itai.
\newblock {Approximation schemes for functions of positive-definite matrix
  values}.
\newblock {\em IMA J. Numer. Anal.}, 33(4):1436--1468, 04 2013.

\bibitem{Shingel08}
T.~Shingel.
\newblock {Interpolation in special orthogonal groups}.
\newblock {\em IMA J. Numer. Anal.}, 29(3):731--745, 07 2008.

\bibitem{Thomsen17}
K.~Thomsen, K.~Mehnert, P.~W. Trimby, and A.~Gholinia.
\newblock Quaternion-based disorientation coloring of orientation maps.
\newblock {\em Ultramicroscopy}, 182:62--67, 2017.

\bibitem{Wallner05}
J.~Wallner and N.~Dyn.
\newblock Convergence and {C1} analysis of subdivision schemes on manifolds by
  proximity.
\newblock {\em Comput. Aided Geom. Des.}, 22(7):593--622, 2005.
\newblock Geometric Modelling and Differential Geometry.

\bibitem{XieYu}
G.~Xie and T.~P.-Y. Yu.
\newblock Smoothness equivalence properties of manifold-valued data subdivision
  schemes based on the projection approach.
\newblock {\em SIAM J. Numer. Anal.}, 45:1200--1225, 2007.

\end{thebibliography}

\end{document}